\theoremstyle{plain}
\newtheorem{thm}{Theorem}[section]
\newtheorem{prop}[thm]{Proposition}
\newtheorem{lem}[thm]{Lemma}
\newtheorem{rem}[thm]{Remark}
\newtheorem{exam}[thm]{Example}
\newtheorem{definition}[thm]{Definition}
\title[The Heights Theorem]{The Heights Theorem for infinite Riemann surfaces}
\thanks{The author was supported in part by a PSC-CUNY grant, and a Simons foundation grant.}
\author{Dragomir \v Sari\' c}
\begin{document}
\maketitle

\begin{abstract} Marden and Strebel established the Heights Theorem for integrable holomorphic quadratic differentials on parabolic Riemann surfaces. We extends the validity of the Heights Theorem to all surfaces whose fundamental group is of the first kind. In fact, we establish a more general result: the {\it horizontal} map which assigns to each integrable holomorphic quadratic differential a measured lamination obtained by straightening the horizontal trajectories of the quadratic differential is injective for an arbitrary Riemann surface with a conformal hyperbolic metric. This was established by Strebel in the case of the unit disk. 

When a hyperbolic surface has a bounded geodesic pants decomposition, the horizontal map assigns a bounded measured lamination to each integrable holomorphic quadratic differential. When surface has a sequence of closed geodesics whose lengths go to zero, then there exists an integrable holomorphic quadratic differential whose horizontal measured lamination is not bounded. We also give a sufficient condition for the  non-integrable holomorphic quadratic differential to give rise to bounded measured laminations.
 \end{abstract}


\section{Introduction}

Let $X$ be an arbitrary Riemann surface equipped with a conformal hyperbolic metric. Denote by $A(X)$ the space of all integrable holomorphic quadratic differentials on $X$. Given a non-zero $\varphi\in A(X)$, denote by $\mathcal{V}_{\varphi}$ the horizontal foliation of $\varphi$. 
Let $\tilde{\varphi}$ and $\tilde{\mathcal{V}}_{\tilde{\varphi}}$ be lifts of $\varphi$ and $\mathcal{V}_{\varphi}$ to the unit disk $\Delta$ which is identified with the universal covering $\tilde{X}$ of $X$.
Marden and Strebel \cite{MardenStrebel}, \cite{Strebel} proved that all leaves of $\tilde{\mathcal{V}}_{\tilde{\varphi}}$ have two endpoints on the ideal boundary $\partial_{\infty}\Delta\equiv S^1$ of $\Delta$. We replace each leaf of $\tilde{\mathcal{V}}_{\tilde{\varphi}}$ by a hyperbolic geodesic with the same endpoints on $S^1$. In this fashion we obtain a geodesic lamination $\tilde{\lambda}_{\tilde{\varphi}}$ on $\Delta$ which projects to a geodesic lamination $\lambda_{\varphi}$ on $X$. We assign a measure to $\tilde{\lambda}_{\tilde{\varphi}}$ by the push-forward of the transverse measure of $\tilde{\mathcal{V}}_{\tilde{\varphi}}$  and thus obtain a measured lamination $\tilde{\nu}_{\tilde{\varphi}}$ on $\Delta$. The measured lamination $\tilde{\nu}_{\tilde{\varphi}}$ projects to a measured lamination $\nu_{\varphi}$ on $X$ corresponding to an integrable holomorphic quadratic differential $\varphi$, called a {\it horizontal measured lamination}. 

Denote by $ML(X)$ the space of all measured laminations on $X$. 
We define the {\it horizontal measure map}
$$
\mathcal{F}:A(X)\to ML(X)
$$
 by
$$
\mathcal{F}(\varphi )=\nu_{\varphi}.
$$
In the case of closed surfaces with genus at least two this map is proved to be a homeomorphism by Hubbard and Masur \cite{HubbardMasur}, and by Kerckhoff \cite{Kerckhoff}. For the case when $X=\Delta$ this map is proved to be injective by Strebel \cite{Strebel2}. Also for $X=\Delta$
a modular measure map (which is closely related to the horizontal measure map) is injective on the space of projective integrable holomorphic quadratic differentials with image  inside the space of projective bounded measured laminations $PML_b(\Delta)$ but the map is not onto (see \cite{HakobyanSaric}). See \cite{Thurston}, \cite{Saric}, \cite{Saric1} or Section 2 for the definition of bounded measured laminations.

We prove (see Theorem \ref{thm:vertical_inj})

\begin{thm}
\label{thm:main}
Let $X$ be an arbitrary Riemann surface equipped with a conformal hyperbolic metric. Then the horizontal measure map
$$
\mathcal{F}:A(X)\to ML(X),\ \ \ \ \mathcal{F}(\varphi )=\nu_{\varphi}
$$
is injective. 
\end{thm}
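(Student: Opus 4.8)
The plan is to deduce injectivity from a length--area (Cauchy--Schwarz) argument carried out directly on $X$, rather than by lifting to $\Delta$, since the lift $\tilde\varphi$ of an integrable $\varphi$ is typically not integrable over the whole disk and so Strebel's result for $A(\Delta)$ cannot be applied verbatim. Suppose $\varphi_1,\varphi_2\in A(X)$ satisfy $\mathcal F(\varphi_1)=\mathcal F(\varphi_2)$, that is $\nu_{\varphi_1}=\nu_{\varphi_2}$. The first step is to convert this equality of geodesic measured laminations into an equality of intersection numbers: since $i(\cdot,\cdot)$ depends only on the measured laminations involved, $i(\nu_{\varphi_1},\mu)=i(\nu_{\varphi_2},\mu)$ for every measured lamination $\mu$. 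I will apply this with $\mu$ the geodesic lamination obtained by straightening the vertical foliation $\mathcal W_{\varphi_1}$ of $\varphi_1$.

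Next I would fix the natural parameter $\zeta=\xi+i\eta$ of $\varphi_1$, in which $d\zeta=\sqrt{\varphi_1}\,dz$, the horizontal foliation $\mathcal V_{\varphi_1}$ is $\{\eta=\mathrm{const}\}$, the vertical foliation $\mathcal W_{\varphi_1}$ is $\{\xi=\mathrm{const}\}$, and the area element is $d\xi\,d\eta=|\varphi_1|\,dx\,dy$. Let $f$ be a locally chosen branch of $\sqrt{\varphi_2/\varphi_1}$, so that $f^2=\varphi_2/\varphi_1$ is a globally defined meromorphic function, $\sqrt{\varphi_2}\,dz=f\,d\zeta$, and $|\varphi_2|\,dx\,dy=|f|^2\,d\xi\,d\eta$. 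Along a vertical leaf $\{\xi=\mathrm{const}\}$ of $\varphi_1$ one has $d\zeta=i\,d\eta$, so the transverse measure of the horizontal foliation $\mathcal V_{\varphi_2}$ of $\varphi_2$ restricted to that leaf is $|\mathrm{Im}(f\,d\zeta)|=|\mathrm{Re}\,f|\,|d\eta|$. Integrating along the vertical leaves and then across them produces the analytic pairing
\[
P:=\int_X |\mathrm{Re}\,f|\,d\xi\,d\eta .
\]

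The heart of the matter is then the Cauchy--Schwarz estimate
\[
P\;\le\;\Big(\int_X |f|^2\,d\xi\,d\eta\Big)^{1/2}\Big(\int_X 1\,d\xi\,d\eta\Big)^{1/2}=\|\varphi_2\|^{1/2}\,\|\varphi_1\|^{1/2},
\]
all integrals being finite by integrability of $\varphi_1,\varphi_2$. On the other hand, $P$ dominates the geometric intersection number of the straightened laminations, $P\ge i(\nu_{\varphi_2},\mu)$, because geodesic representatives minimize intersection in their homotopy class; by the first step $i(\nu_{\varphi_2},\mu)=i(\nu_{\varphi_1},\mu)$, and this last quantity equals $\|\varphi_1\|$, the self--pairing of the horizontal and vertical laminations of a single differential (which records its area). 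Combining, $\|\varphi_1\|\le P\le\|\varphi_1\|^{1/2}\|\varphi_2\|^{1/2}$, whence $\|\varphi_1\|\le\|\varphi_2\|$; the symmetric computation with the roles of $\varphi_1$ and $\varphi_2$ exchanged gives $\|\varphi_1\|=\|\varphi_2\|$. Equality then propagates through the chain, forcing equality in Cauchy--Schwarz, which requires $\mathrm{Im}\,f=0$ and $|f|$ constant almost everywhere; the equality of norms pins the constant to $1$, so $f^2=\varphi_2/\varphi_1\equiv 1$ and $\varphi_2=\varphi_1$.

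The main obstacle is precisely the two identifications used to close the estimate, both of which are routine on closed surfaces but delicate on a non-compact surface: first, that the analytic pairing $P$ bounds below (indeed computes) the geometric intersection number $i(\nu_{\varphi_2},\mu)$ of the geodesic laminations, and second, that $i(\nu_{\varphi_1},\mu)=\|\varphi_1\|$. On an infinite surface the global trajectory structure of $\varphi_1$ may be highly recurrent, with infinitely many strips accumulating at the ends, so one must justify the Fubini-type integration of transverse measure along leaves and control convergence near the ends. Most importantly, one must show that replacing each trajectory by the hyperbolic geodesic with the same endpoints on $S^1$---legitimate by the Marden--Strebel endpoint theorem quoted above---neither creates nor destroys transverse intersection, so that the area of $\varphi_1$ is faithfully recorded by the intersection pairing of the straightened horizontal and vertical laminations. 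This is where the passage through the equivariant picture on $\Delta$ and the hypothesis on the ideal endpoints of leaves do the essential work.
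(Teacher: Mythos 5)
Your overall skeleton is the classical length--area strategy that the paper also uses: derive $\|\varphi_1\|\le\|\varphi_2\|$ from the hypothesis $\nu_{\varphi_1}=\nu_{\varphi_2}$ via Cauchy--Schwarz, symmetrize, and then exploit equality in Cauchy--Schwarz to force $\varphi_2/\varphi_1\equiv 1$. The final equality analysis you give is correct and matches the paper's conclusion ($\partial v_1/\partial u\equiv 0$, $|\partial v_1/\partial v|\equiv 1$). The difference is in how the middle of the chain is organized: you package everything into a single global pairing $P=\int_X|\mathrm{Re}\,f|\,d\xi\,d\eta$ and two asserted identities, namely $P\ge i(\nu_{\varphi_2},\mu)$ and $i(\nu_{\varphi_1},\mu)=\|\varphi_1\|$, where $\mu$ is the straightened vertical lamination of $\varphi_1$. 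The paper instead never forms a global lamination--lamination pairing: it decomposes $X$ (up to measure zero) into vertical cylinders, vertical spiral sets, and strips of vertical cross-cuts, quotes the Marden--Strebel area inequality $\iint_{R_k}|\varphi|\le\iint_{R_k}|\varphi_1|$ for cylinders and spiral sets (whose hypothesis --- equality of heights of simple closed curves --- it derives from $\nu_{\varphi}=\nu_{\varphi_1}$ via Lemma \ref{lem:heights-intersections}), and proves a new comparison (Lemma \ref{lem:crosscut-comp}, resting on Lemma \ref{lem:cross-cut-height}) for individual cross-cut vertical trajectories, then runs Cauchy--Schwarz strip by strip.

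The genuine gap in your write-up is that the two identities you flag as ``the main obstacle'' are not side issues to be justified later: they \emph{are} the theorem. First, $i(\nu_{\varphi_1},\mu)=\|\varphi_1\|$ requires you to (a) define the intersection number of two measured geodesic laminations of possibly unbounded, non-compactly-supported type on an infinite surface, and (b) show that straightening both the horizontal and the vertical foliation of $\varphi_1$ loses no transverse mass. For the recurrent (spiral) part of the trajectory structure this is exactly where Marden--Strebel's approximation of spirals by closed curves and the heights hypothesis do real work; a leaf-by-leaf linking argument does not by itself control the measure of the recurrent part. Second, the inequality $P\ge i(\nu_{\varphi_2},\mu)$ is not a consequence of ``geodesics minimize intersection'': it is precisely the content of the paper's Lemma \ref{lem:crosscut-comp}, whose proof needs the fact (Lemma \ref{lem:cross-cut-height}) that a cross-cut vertical trajectory of $\varphi_1$ meets each horizontal trajectory of $\varphi_1$ at most once, so that its $dv$-length computes an intersection number exactly, while against $\varphi_2$ one only gets an inequality because of multiple or non-essential crossings. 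Without supplying proofs of these two statements --- which, once written out, amount to reconstructing the paper's decomposition into cylinders, spiral sets and cross-cut strips --- the argument is a correct outline rather than a proof. I would also note that your opening reduction ``$i(\nu_{\varphi_1},\mu)=i(\nu_{\varphi_2},\mu)$ for every measured lamination $\mu$'' is vacuously true since the two laminations coincide by hypothesis; the substance lies entirely in relating these geometric pairings to the analytic quantities $\|\varphi_1\|$ and $P$.
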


There is a recent interest in studying linear flows on infinite translation surfaces(for example, see \cite{CGL}, \cite{H}, \cite{H1}, \cite{HT}, \cite{HW}, \cite{HubertS}, \cite{HubertW}, \cite{PSV}, \cite{Trevino}, \cite{V}). The space of holomorphic quadratic differentials contains squares of Abelian differentials arising from the infinite translations surface. Linear flows on infinite translation surfaces of finite area are of a particular interest (see \cite{HT}, \cite{H1}, \cite{Trevino}).

Assume now that $X$ has a non-trivial fundamental group $\pi_1(X)$ and fix a non-zero $\varphi\in A(X)$.
For a simple closed geodesic $\gamma$ on $X$, the {\it height} of the homotopy class $[\gamma ]$ of $\gamma$ is defined by
$$
h_{\varphi}([\gamma ] ):=\inf_{\gamma'} \int_{\gamma'} d\mathcal{V}_{\varphi},
$$
where the infimum is over all differentiable simple closed curves $\gamma' \in [\gamma ]$. 

Therefore we have a heights map $h_{\varphi}:\mathcal{S}\to\mathbb{R}$ for every $\varphi$, where $\mathcal{S}$  is the set of all simple closed geodesic on $X$.
The Heights Theorem of Marden and Strebel (see \cite{MardenStrebel1} and \cite{Strebel}) states that the map which assigns the height function $h_{\varphi}$ to each $\varphi\in A(X)$ is injective if $X$ is a parabolic Riemann surface (see \cite{AhlforsSario} for the definition of a parabolic Riemann surface). The statement of the Heights Theorem is false when surface $X$ is not equal to its convex core-i.e., when it contains a funnel or a geodesic half-plane (see \cite{BS}), because a holomorphic quadratic differential may have horizontal trajectories which do not  essentially intersect any homotopy class of simple closed geodesics. The largest class of Riemann surfaces for which the Heights Theorem could possibly apply are the surfaces whose fundamental groups are of the first kind-i.e., surface that are equal to their convex cores.

Indeed, we first prove that a measured lamination on a Riemann surface (with the fundamental group of the first kind) is uniquely determined by its intersections with all simple closed geodesics (see Theorem \ref{thm:intersection-unique}). Then we have (see Theorem \ref{thm:heights})

\begin{thm}
Let $X$ be a Riemann surface whose fundamental group is of the first kind and $\varphi ,\varphi_1$ two integrable holomorphic quadratic differentials on $X$. If
$$
h_{\varphi}([\gamma ] )=h_{\varphi_1}([\gamma ] )
$$
for all $\gamma\in\mathcal{S}$ then
$$
\varphi =\varphi_1.
$$
\end{thm}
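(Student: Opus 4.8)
The plan is to reduce the Heights Theorem to the two injectivity results the paper has already announced, and to bridge them by showing that the height function determines the intersection numbers of the horizontal measured lamination with all simple closed geodesics. Concretely, I would establish the identity
\begin{equation}
\label{eq:height-is-intersection}
h_{\varphi}([\gamma]) = i(\nu_{\varphi},\gamma)
\end{equation}
for every $\gamma\in\mathcal{S}$, where $i(\cdot,\cdot)$ denotes the geometric intersection number of the horizontal measured lamination $\nu_{\varphi}=\mathcal{F}(\varphi)$ with the simple closed geodesic $\gamma$. The left side is an infimum of the transverse measure of the horizontal foliation $\mathcal{V}_{\varphi}$ over all curves in the homotopy class $[\gamma]$, while the right side is the total mass that $\nu_{\varphi}$ assigns to $\gamma$. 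That these agree is exactly the statement that straightening the horizontal trajectories into geodesics (the construction of $\nu_{\varphi}$ from $\varphi$ described in the introduction) does not change the minimal transverse measure of a homotopy class; the minimizing curve for $h_{\varphi}([\gamma])$ is the geodesic representative, and its $\mathcal{V}_{\varphi}$-length equals its $\nu_{\varphi}$-intersection.

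Granting \eqref{eq:height-is-intersection}, the theorem follows by a short chain. Suppose $h_{\varphi}([\gamma])=h_{\varphi_1}([\gamma])$ for all $\gamma\in\mathcal{S}$. By \eqref{eq:height-is-intersection} this means $i(\nu_{\varphi},\gamma)=i(\nu_{\varphi_1},\gamma)$ for every simple closed geodesic $\gamma$. Now I invoke Theorem \ref{thm:intersection-unique}, which asserts that on a surface whose fundamental group is of the first kind a measured lamination is uniquely determined by its intersection numbers with all simple closed geodesics; hence $\nu_{\varphi}=\nu_{\varphi_1}$, i.e. $\mathcal{F}(\varphi)=\mathcal{F}(\varphi_1)$. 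Finally, Theorem \ref{thm:main} (equivalently Theorem \ref{thm:vertical_inj}) says $\mathcal{F}$ is injective on $A(X)$, so $\varphi=\varphi_1$, as desired. The surfaces here satisfy the hypotheses of both invoked theorems, since being of the first kind is precisely the standing assumption.

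The main obstacle is the proof of \eqref{eq:height-is-intersection}, and within it the passage from the \emph{analytic} infimum defining $h_{\varphi}$ to the \emph{geometric} intersection number of the straightened lamination. Two points need care. First, one must show the infimum over differentiable curves in $[\gamma]$ is realized (or approximated) by the hyperbolic geodesic representative and that its $\mathcal{V}_{\varphi}$-transverse measure equals $i(\nu_{\varphi},\gamma)$; this is where the endpoint-on-$S^1$ property of lifted horizontal leaves, due to Marden and Strebel and quoted in the introduction, does the real work, ensuring that straightening a leaf to the geodesic with the same ideal endpoints preserves how it crosses $\gamma$. Second, one must rule out ``leakage'' of transverse measure under straightening, i.e. confirm that no mass of $\mathcal{V}_{\varphi}$ crossing $\gamma$ is lost or created when leaves are replaced by geodesics — a $\pi_1$-equivariant computation on the universal cover $\Delta$, where the foliation and its straightening are compared along the axis of the deck transformation corresponding to $[\gamma]$. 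Once \eqref{eq:height-is-intersection} is secured on $\Delta$ and pushed down equivariantly to $X$, the remainder of the argument is formal.
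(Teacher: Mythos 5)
Your proposal follows essentially the same route as the paper: the identity $h_{\varphi}([\gamma])=i(\nu_{\varphi},\gamma)$ is precisely the paper's Lemma \ref{lem:heights-intersections} (proved there by approximating the infimum with step curves and modifying them strip by strip), and the paper's proof of Theorem \ref{thm:heights} is exactly your two-step chain through Theorem \ref{thm:intersection-unique} and Theorem \ref{thm:vertical_inj}. The only caveat is that the infimum defining $h_{\varphi}$ need not be realized by the geodesic representative itself, which is why the paper argues via step curves rather than evaluating the transverse measure on the geodesic; but your overall reduction is correct and matches the paper.
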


The image of the horizontal measure map
$$
\mathcal{F}:A(X)\to ML(X)
$$
is not surjective unlike in the case of compact surfaces. 
We first restrict our attention to surfaces that have bounded pants decomposition or bounded geometry or finite topology (see Theorems \ref{thm:integrable_measured_laminations}, \ref{thm:bounded_geom} and \ref{thm:funnels}). Denote by $ML_b(X)$ the space of all bounded measured laminations (for the definition, see \cite{Thurston}, \cite{Saric} or Section 2).

\begin{thm}
Let $X$ be a Riemann surface that either has a bounded pants decomposition or it is of bounded geometry.
Given an integrable holomorphic quadratic differential $\varphi$ on $X$, denote by $\nu_{\varphi}$ the horizontal measured lamination obtained by straightening the leaves of the horizontal foliation of $\varphi$. Then
$$
\nu_{\varphi}\in ML_b(X) .
$$
Moreover, if $X$ is a finite Riemann surface with a conformal hyperbolic metric of infinite area and $\varphi\in A(X)$ then
$$
\nu_{\varphi}\in ML_b(X).
$$
\end{thm}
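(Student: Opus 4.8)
The plan is to verify the defining property of a bounded measured lamination directly: $\nu\in ML_b(X)$ precisely when there is a uniform bound on the transverse measure $\nu_\varphi(\beta)$ over all geodesic arcs $\beta$ of hyperbolic length one (equivalently for the lift $\tilde\nu_{\tilde\varphi}$ to $\Delta$). So it suffices to produce a constant $C=C(\varphi)$, depending only on $\|\varphi\|_{A(X)}=\iint_X|\varphi|$ and on the geometry of $X$, with $\nu_\varphi(\beta)\le C$ for every unit-length geodesic arc $\beta\subset X$.

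First I would extract from each of the hypotheses the single geometric feature that drives the argument: a uniform lower bound $\ell_0>0$ on the lengths of all closed geodesics of $X$. For a bounded pants decomposition this is immediate, and for bounded geometry it is part of the definition; for a finite Riemann surface there are only finitely many curves in a geodesic pants decomposition, each of definite length, so again the lengths are bounded below (the funnels present in the infinite-area case only increase the injectivity radius and so do no harm). A lower bound on closed geodesic lengths gives, via the collar lemma, a lower bound $\iota_0>0$ on the injectivity radius at every point lying outside the cusp neighborhoods, since any geodesic loop based at $p$ is freely homotopic to a closed geodesic no longer than itself. Consequently there is a fixed radius $r_0\le\iota_0/2$ such that the $r_0$-neighborhood $N_\beta$ of any unit geodesic arc $\beta$ embeds in $X$ and has hyperbolic area at most a universal constant $A_0$ (the area of an $r_0$-tube about a unit segment in $\Delta$). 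The embeddedness is the crucial point: it is exactly what fails along arbitrarily short geodesics, which is why the hypotheses are needed and why the second theorem in the abstract produces unbounded laminations in their presence.

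The heart of the proof is a Cauchy--Schwarz averaging estimate carried out in Fermi coordinates $(s,t)$ about $\beta$, where $s$ is arclength along $\beta$ and $t\in[-r_0,r_0]$ is signed normal distance, so that the arcs $\beta_t=\{t=\mathrm{const}\}$ sweep out $N_\beta$. The key inequality I would establish is that for (almost) every $t$ the straightened measure satisfies $\nu_\varphi(\beta)\le\int_{\beta_t}d\mathcal V_\varphi\le\int_{\beta_t}|\varphi|^{1/2}\,|dz|$, i.e. the transverse measure that $\nu_\varphi$ places on $\beta$ is dominated by the $\varphi$-length of each sweeping transversal. Writing $\rho$ for the hyperbolic density and integrating this bound in $t$ converts the family of line integrals into an area integral over $N_\beta$, to which Cauchy--Schwarz applies:
\begin{align*}
2r_0\,\nu_\varphi(\beta)\ \le\ \iint_{N_\beta}|\varphi|^{1/2}\,\rho\,dx\,dy
\ \le\ \Big(\iint_{N_\beta}|\varphi|\,dx\,dy\Big)^{1/2}\Big(\iint_{N_\beta}\rho^{2}\,dx\,dy\Big)^{1/2}
\ \le\ \|\varphi\|_{A(X)}^{1/2}\,A_0^{1/2}.
\end{align*}
Because $N_\beta$ embeds, the averaging takes place in a genuine subregion of $X$ and the first factor is controlled by the global norm $\|\varphi\|_{A(X)}$, while the second is the bounded area $A_0$; dividing by $2r_0$ yields $\nu_\varphi(\beta)\le(2r_0)^{-1}A_0^{1/2}\|\varphi\|_{A(X)}^{1/2}=:C$, uniformly in $\beta$, which is the desired conclusion $\nu_\varphi\in ML_b(X)$.

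The main obstacle is the displayed domination $\nu_\varphi(\beta)\le\int_{\beta_t}d\mathcal V_\varphi$. Straightening replaces each trajectory of $\mathcal V_\varphi$ by the geodesic with the same endpoints on $S^1$, and a priori this can move a leaf off $\beta$ or bring a new geodesic across it, so the transverse measures on a \emph{fixed} arc need not compare. I would handle this by tracking endpoints in $\Delta$: every leaf of $\tilde{\mathcal V}_{\tilde\varphi}$ whose straightening meets (the lift of) $\beta$ must itself meet a transversal $\beta_t^{+}$ obtained from $\beta_t$ by a bounded extension inside $N_\beta$, chosen so that $\beta_t^{+}$ separates the two endpoints of that leaf; hence $\nu_\varphi(\beta)\le\int_{\beta_t^{+}}d\mathcal V_\varphi$, and the extension costs only a fixed factor that is absorbed into the constants above. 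Finally I would dispose of the cusps, which are not reached by the collar argument: integrability of $\varphi$ forces at worst a simple pole at each puncture, so in the cuspidal coordinate $\varphi$ decays exponentially along the cusp and the transverse measure on unit arcs there tends to zero. Combining the thick-part estimate with the cusp estimate gives the uniform bound on all of $X$, and the same computation, now using that finite type forces $\ell_0>0$, proves the ``moreover'' statement for finite Riemann surfaces of infinite area.
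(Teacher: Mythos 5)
Your overall strategy---reduce to a uniform lower bound on the systole, take an embedded tube $N_\beta$ around a unit arc $\beta$, and run a length--area/Cauchy--Schwarz estimate against $\|\varphi\|_{L^1}$---founders on the one step you yourself flag as the ``main obstacle,'' and the fix you propose does not work. The inequality $\nu_\varphi(\beta)\le\int_{\beta_t}d\mathcal V_\varphi$ requires that every horizontal trajectory $h$ whose straightening $G(h)$ crosses $\tilde\beta$ itself crosses a transversal lying inside $N_\beta$. Your justification is to extend $\beta_t$ by a bounded amount to an arc $\beta_t^{+}$ that ``separates the two endpoints of that leaf.'' But a compact arc in $\Delta$ separates nothing on $S^1$: the only curves that $h$ is \emph{forced} to cross, by virtue of having the same endpoints as $G(h)$, are cross-cuts of $\Delta$ (properly embedded arcs with both ends on $S^1$), and no bounded extension of $\beta_t$ is such a curve. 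A trajectory and its straightening share endpoints but need not stay within any bounded distance of one another, so a leaf whose straightening passes through $\tilde\beta$ can avoid $N_\beta$ entirely; the mass that $\nu_\varphi$ deposits on $\beta$ is therefore not controlled by the $\varphi$-geometry of any fixed-size neighborhood of $\beta$, and your area integral over $N_\beta$ does not bound it. (If instead you extend $\beta$ to the complete geodesic containing it, the leaf is forced to cross that geodesic, but possibly arbitrarily far from $\beta$, and you lose all locality.)

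The paper circumvents exactly this difficulty by never estimating $i(\nu_\varphi,\beta)$ directly for an arbitrary arc. It works with the \emph{closed} pants curves $\alpha_n$: for a simple closed geodesic, $i(\nu_\varphi,\alpha_n)$ equals the height $h_\varphi([\alpha_n])=\inf_{\alpha'\sim\alpha_n}\int_{\alpha'}|dv|$ (Lemma \ref{lem:heights-intersections}), a homotopy-invariant quantity that \emph{can} be computed on a well-chosen representative inside the standard collar (Lemma \ref{lem:restriction}); the collar's bounded modulus plus Cauchy--Schwarz then gives $i(\nu_\varphi,\alpha_n)^2\le C'\|\varphi\|_{L^1}$, and the atomic part on $\alpha_n$ is bounded separately via the moduli and heights of the ring domains. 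The passage to an arbitrary unit arc $I$ is then combinatorial: $I$ meets boundedly many pairs of pants, and every leaf of the lamination crossing $I$ either is one of finitely many $\alpha_n$ or must cross one of them, so $i(\nu_\varphi,I)$ is bounded by a fixed multiple of the cuff estimates. It is this use of homotopy invariance for closed curves, unavailable for compact arcs, that your argument is missing; to repair your proof you would need to replace the local tube estimate by an estimate attached to closed curves (or to cross-cuts of the surface), which is in substance the paper's argument.
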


In the case when $X$ has an upper bounded geodesic pants decomposition with boundary geodesics $\{\alpha_n\}_n$ and a subsequence $\alpha_{n_i}$ with $l_X(\alpha_{n_i})\to 0$ as $i\to\infty$,  we obtain that the image of $A(X)$ is not completely contained in $ML_b(X)$ (see Theorem \ref{thm:not_bounded} and Proposition \ref{thm:upper_bounded_measured}).  

\begin{thm}
Let $X$ be a Riemann surface with an upper bounded geodesic pants decomposition with boundary geodesics $\{\alpha_n\}_n$ and a subsequence $\{\alpha_{n_i}\}$ whose lengths go to zero. Then there exists $\varphi_0\in A(X)$ whose horizontal foliation has ring domains homotopic to $\alpha_{n_i}$  such that
$$
\nu_{\varphi_0}\in ML(X)- ML_b(X),\ \ \nu_{\varphi_0}(\alpha_{n_i})\to\infty .
$$
Moreover, for any $\varphi\in A(X)$ we have
$$
i(\nu_{\varphi},\alpha_{n_i})\leq\frac{C}{\sqrt{l_X(\alpha_{n_i})}}
$$
\end{thm}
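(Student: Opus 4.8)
The plan is to prove the two assertions by independent arguments. The common geometric input is the Collar Lemma: a simple closed geodesic $\alpha$ of length $\ell=l_X(\alpha)$ carries an embedded collar $C(\alpha)$ whose conformal modulus equals $\tfrac{2}{\ell}\arctan(1/\sinh(\ell/2))$, which is asymptotic to $\pi/\ell$ as $\ell\to 0$, and these collars are pairwise disjoint for the disjoint geodesics $\alpha_n$. I would interpret the weight $\nu_\varphi(\alpha_{n_i})$ appearing in both displays as the transverse measure (height) of the maximal ring domain of closed horizontal trajectories homotopic to $\alpha_{n_i}$; this is the quantity written $i(\nu_\varphi,\alpha_{n_i})$ in the statement, so that the construction essentially saturates the universal bound.

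For the existence part I would first produce, for each $i$, a single building block $\varphi_i\in A(X)$ whose horizontal foliation contains a ring domain $R_i$ in the class $[\alpha_{n_i}]$ with $\mathrm{mod}\,R_i$ comparable to $\pi/l_X(\alpha_{n_i})$ and with prescribed small norm $\|\varphi_i\|$. The local model on the collar is $-d\zeta^2$ in the flat cylinder coordinate $\zeta$ (equivalently $-dz^2/z^2$ in a plumbing coordinate), whose horizontal trajectories are exactly the closed curves homotopic to $\alpha_{n_i}$, and whose ring domain satisfies the flat-cylinder identity $h(R_i)^2=\mathrm{mod}\,R_i\cdot\|\varphi_i\|_{R_i}$. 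Since this model is tied to the collar coordinate and does not extend, I would realize $\varphi_i$ globally as a normal-families limit, over a compact exhaustion $X_1\subset X_2\subset\cdots$ of $X$, of the (normalized) Jenkins--Strebel differentials on $X_k$ associated with $\alpha_{n_i}$, using that the extremal modulus of $[\alpha_{n_i}]$ in $X$ is at least $\mathrm{mod}\,C(\alpha_{n_i})\sim\pi/l_X(\alpha_{n_i})$. I would then set $\varphi_0=\sum_i\varphi_i$, passing to a subsequence of $\{\alpha_{n_i}\}$ and choosing $\|\varphi_i\|=\sqrt{l_X(\alpha_{n_i})}$ so that $\sum_i\|\varphi_i\|<\infty$ (hence $\varphi_0\in A(X)$) while $h(R_i)\asymp (\pi/l_X(\alpha_{n_i}))^{1/2}\,(l_X(\alpha_{n_i}))^{1/4}=\sqrt{\pi}\,l_X(\alpha_{n_i})^{-1/4}\to\infty$. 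Straightening gives $\nu_{\varphi_0}(\alpha_{n_i})=h(R_i)\to\infty$; and because an atom of weight $h$ along $\alpha_{n_i}$ forces the transverse measure of any unit-length geodesic arc crossing $\alpha_{n_i}$ to be at least $h$, the $ML_b$-norm of $\nu_{\varphi_0}$ is infinite, so $\nu_{\varphi_0}\in ML(X)-ML_b(X)$.

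For the upper bound I would fix an arbitrary $\varphi\in A(X)$ and let $R_i$ be the maximal ring domain of closed horizontal trajectories of $\varphi$ homotopic to $\alpha_{n_i}$; these are the only leaves whose straightening equals $\alpha_{n_i}$, so they alone contribute to $\nu_\varphi(\alpha_{n_i})$. On the flat cylinder $R_i$ one has $\nu_\varphi(\alpha_{n_i})^2=\mathrm{mod}\,R_i\cdot\|\varphi\|_{R_i}\le \mathrm{mod}\,R_i\cdot\|\varphi\|$. As $R_i$ is a conformally embedded annulus in the class $[\alpha_{n_i}]$, its modulus is at most the extremal one, $\mathrm{mod}\,R_i\le 1/\mathrm{Ext}_X(\alpha_{n_i})$, and Maskit's inequality $\mathrm{Ext}_X(\alpha_{n_i})\ge l_X(\alpha_{n_i})/\pi$ yields $\mathrm{mod}\,R_i\le \pi/l_X(\alpha_{n_i})$. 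Combining these gives $i(\nu_\varphi,\alpha_{n_i})=\nu_\varphi(\alpha_{n_i})\le \sqrt{\pi\|\varphi\|}\,/\sqrt{l_X(\alpha_{n_i})}=C/\sqrt{l_X(\alpha_{n_i})}$ with $C=\sqrt{\pi\|\varphi\|}$.

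The main obstacle is the globalization in the construction. One cannot simply set $\varphi_0$ equal to the model $-d\zeta^2$ on each collar, since by the identity theorem a nonzero holomorphic quadratic differential on the connected surface $X$ is determined by its restriction to any open set; the building blocks must therefore be produced by the existence/limit argument and then summed. Two points require care: verifying that the exhaustion limit is genuinely integrable on $X$ and retains a ring domain whose modulus is bounded below by a fixed fraction of $\pi/l_X(\alpha_{n_i})$ (enough to force $h(R_i)\to\infty$); and, because horizontal foliations do not add, showing that the cross terms $\sum_{j\ne i}\varphi_j$ are small enough on a core sub-collar of $C(\alpha_{n_i})$, in a norm controlling the trajectory structure, that a ring domain in $[\alpha_{n_i}]$ of height still tending to infinity survives in $\varphi_0$. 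By contrast, the upper bound is routine once the flat-cylinder identity and Maskit's inequality are in hand.
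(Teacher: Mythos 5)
Your strategy for the existence part --- produce cylinders homotopic to the $\alpha_{n_i}$ with heights $b_i\to\infty$ while keeping $\sum_i b_i^2/\mathrm{mod}\,\mathcal{C}(\alpha_{n_i})<\infty$ --- is the same as the paper's, but the globalization step you flag as ``requiring care'' is the whole difficulty, and your route (setting $\varphi_0=\sum_i\varphi_i$ for building blocks $\varphi_i$ obtained from an exhaustion) does not close it. Horizontal foliations are not additive, so there is no reason a ring domain of $\varphi_i$ survives in $\varphi_i+\sum_{j\ne i}\varphi_j$; worse, your normalization forces $\|\varphi_i\|\to 0$, so on the collar of $\alpha_{n_i}$ the tail $\sum_{j\ne i}\varphi_j$ need not be small relative to $\varphi_i$ and can destroy the cylinder entirely. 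The paper avoids all of this by invoking Strebel's existence theorem for differentials with closed trajectories (\cite[Theorem 22.1]{Strebel}): given disjoint homotopy classes admitting disjoint annuli of moduli $M_i$ and prescribed heights $b_i$ with $\sum_i b_i^2/M_i<\infty$, there is a \emph{single} integrable holomorphic quadratic differential realizing all the prescribed cylinders simultaneously (the paper takes $l_X(\alpha_{n_{k_i}})\le 1/i^2$ and $b_i=i^{1/3}$). Without this, or an equivalent existence result, your construction is incomplete.

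The ``moreover'' part contains a conceptual error: you identify $i(\nu_\varphi,\alpha_{n_i})$ with the atomic mass $\nu_\varphi(\alpha_{n_i})$, but in this paper these are distinct quantities. The atom records the height of the maximal ring domain of closed horizontal trajectories homotopic to $\alpha_{n_i}$ (those leaves straighten to $\alpha_{n_i}$ itself and do not meet it transversally), whereas $i(\nu_\varphi,\alpha_{n_i})$ is the transverse measure deposited on $\alpha_{n_i}$ by the leaves crossing it; for the $\varphi_0$ constructed in the first part one has $i(\nu_{\varphi_0},\alpha_{n_i})=0$ while $\nu_{\varphi_0}(\alpha_{n_i})\to\infty$, so the two cannot coincide. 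Your flat-cylinder identity combined with Maskit's inequality does give a correct bound, but on the atom (it is essentially a sharpened form of the paper's estimates (\ref{eq:height})--(\ref{eq:atom})), not on the quantity in the displayed inequality. The paper's Proposition \ref{thm:upper_bounded_measured} instead bounds $i(\nu_\varphi,\alpha_n)$ by the length-area (Cauchy--Schwarz) argument applied to the family $\mathcal{V}_n$ of horizontal arcs crossing the standard collar from one boundary component to the other, together with $\mathrm{mod}(\Gamma_n)\le C/l_X(\alpha_n)$; you would need to run that argument to obtain the stated estimate.
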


The horizontal measure map is even more complicated when we consider the image of non-integrable holomorphic quadratic differentials. In order to introduce some control, we find  sufficient conditions on a Riemann surface $X$ and a non-integrable holomorphic quadratic differential $\varphi$ such that $\nu_{\varphi}$ is a bounded measured lamination (see Theorem \ref{thm:nonint}).

\begin{thm}
Let $X$ be an infinite hyperbolic surface equipped with two bounded geodesic pants decomposition $\mathcal{P}=\{ P_k\}$ and $\mathcal{P}'=\{ P_k\}'$ which do not share a boundary geodesic. Let $\{\alpha_n\}$, $\{\alpha_n'\}$ be the geodesic boundaries of $P_k$ and $P_k'$. Let $\mathcal{C}_n$ and $\mathcal{C}_n'$ be the standard collars of $\alpha_n$ and $\alpha_n'$. If 
$$
\sup_n\int_{\mathcal{C}_n}|\varphi (\zeta )|d\xi d\eta<\infty 
$$
and
$$
\sup_n\int_{\mathcal{C}_n'}|\varphi (\zeta )|d\xi d\eta<\infty 
$$
then 
$
\nu_{\varphi} 
$ is a bounded measured lamination. 
\end{thm}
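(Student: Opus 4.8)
The plan is to verify the defining property of a bounded measured lamination directly: I must produce a constant $M$ so that $\nu_{\varphi}(I)\le M$ for every geodesic arc $I$ of hyperbolic length one that is transverse to $\nu_{\varphi}$. The whole argument rests on converting this transverse measure, which is a one-dimensional quantity, into the two-dimensional $\varphi$-area $\int|\varphi|\,d\xi d\eta$ of a fixed-size neighborhood of $I$, and then controlling those areas by the two families of collar integrals assumed finite in the hypothesis.

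First I would set up a length--area (sweeping) estimate for a single unit arc $I$. Flowing $I$ perpendicular to itself produces a family of arcs $I_t$, $t\in[0,1]$, foliating a unit slab $R$ around $I$, each $I_t$ homotopic to $I$ inside $R$; by homotopy invariance of the transverse measure, $\nu_{\varphi}(I)=\nu_{\varphi}(I_t)$ for every $t$. Since straightening the horizontal trajectories does not increase the transverse measure relative to the horizontal foliation, $\nu_{\varphi}(I_t)\le\int_{I_t}|\mathrm{Im}\sqrt{\varphi}\,dz|\le\int_{I_t}|\varphi|^{1/2}\,|dz|$. Averaging over $t$ and writing the product of the arclength element and the normal displacement as the hyperbolic area element, I obtain, by Cauchy--Schwarz,
$$
\nu_{\varphi}(I)=\int_0^1\nu_{\varphi}(I_t)\,dt\le\iint_R|\varphi|^{1/2}\rho\,d\xi d\eta\le\Big(\iint_R|\varphi|\,d\xi d\eta\Big)^{1/2}\Big(\iint_R\rho^2\,d\xi d\eta\Big)^{1/2},
$$
where $\rho$ denotes the hyperbolic density. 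The last factor is the hyperbolic area of a unit slab and is bounded by a universal constant, so $\nu_{\varphi}(I)\le C_0\big(\int_R|\varphi|\,d\xi d\eta\big)^{1/2}$. Thus the problem reduces to the uniform bound $\sup_B\int_B|\varphi|\,d\xi d\eta<\infty$, the supremum taken over hyperbolic balls $B$ of a fixed radius.

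The remaining step is a covering lemma: the collars $\{\mathcal{C}_n\}\cup\{\mathcal{C}_n'\}$ of the two decompositions cover $X$ with uniformly bounded multiplicity, and any fixed-radius ball $B$ meets only a uniformly bounded number of them. Bounded multiplicity and the bound on the number of collars meeting $B$ follow from bounded geometry: since all cuff lengths lie in a fixed interval $[m,M]$, each pair of pants has bounded diameter, each collar has width in a fixed interval, and a fixed-radius ball meets only boundedly many pants and hence boundedly many collars. Granting the covering, $\int_B|\varphi|\,d\xi d\eta$ is dominated by the sum of $\int_{\mathcal{C}_n}|\varphi|$ and $\int_{\mathcal{C}_n'}|\varphi|$ over the boundedly many collars meeting $B$, each of which is at most $\sup_n\int_{\mathcal{C}_n}|\varphi|$ or $\sup_n\int_{\mathcal{C}_n'}|\varphi|$; combined with the previous paragraph this yields the desired uniform bound on $\nu_{\varphi}(I)$, whence $\nu_{\varphi}\in ML_b(X)$.

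The main obstacle is proving that the two collar families actually cover $X$, i.e.\ that the ``core'' $K_k=P_k\setminus\bigcup\mathcal{C}_n$ left uncovered by the collars of $\mathcal{P}$ inside each pair of pants $P_k$ is covered by collars of $\mathcal{P}'$, and symmetrically. This is exactly where the hypothesis that $\mathcal{P}$ and $\mathcal{P}'$ share no boundary geodesic enters: a geodesic $\alpha_n'$ of $\mathcal{P}'$ cannot be freely homotopic to any cuff of $P_k$, so its geodesic realization must enter $P_k$ and cross the core region $K_k$ rather than run parallel to $\partial P_k$. Since in bounded geometry $K_k$ is a set of bounded diameter that retracts to the spine of $P_k$ and the collars $\mathcal{C}_n'$ have width bounded below, I expect the crossing arcs of $\mathcal{P}'$ together with their collars to sweep out all of $K_k$; making this inclusion quantitative---showing that the collar widths of $\mathcal{P}'$ exceed the transverse thickness of the cores of $\mathcal{P}$---is the delicate point that the bounded-geometry constants must be arranged to guarantee.
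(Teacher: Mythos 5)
Your reduction of the Thurston bound to a uniform local area bound $\sup_B\iint_B|\varphi|\,d\xi d\eta<\infty$ over fixed-radius balls is where the argument breaks. That bound is strictly stronger than the hypothesis, which only controls $\iint_{\mathcal{C}_n}|\varphi|$ and $\iint_{\mathcal{C}_n'}|\varphi|$ over the \emph{standard collars}. For your covering lemma you would need the union $\{\mathcal{C}_n\}\cup\{\mathcal{C}_n'\}$ to cover $X$ (or at least a neighborhood of every unit arc), and the hypothesis that $\mathcal{P}$ and $\mathcal{P}'$ share no cuff does not give this: standard collars have a fixed width determined by the cuff lengths, and the central regions of the pairs of pants of $\mathcal{P}$ need not be swept out by the collars of $\mathcal{P}'$ (being within a bounded distance of some $\alpha_m'$ is not the same as lying in its standard collar). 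You flag this yourself as ``the delicate point,'' but it is not a technicality to be arranged --- it is false in general, and since $\varphi$ is \emph{not} assumed integrable, nothing in the hypothesis prevents $\varphi$ from carrying arbitrarily large mass on the complementary regions, in which case your estimate $\nu_{\varphi}(I)\le C_0(\iint_R|\varphi|)^{1/2}$ gives no information for arcs $I$ there. (A secondary issue: the pointwise inequality $\nu_{\varphi}(I_t)\le\int_{I_t}|\mathrm{Im}\sqrt{\varphi}\,dz|$ for an arbitrary arc also needs justification, since a trajectory and its straightened geodesic need not cross the same compact arcs; the paper only proves such comparisons for closed curves via the infimum over the homotopy class.)

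The paper's proof avoids any local area bound away from the collars. It runs the length--area/Cauchy--Schwarz argument \emph{inside each collar only}, exactly as in the proof of Theorem \ref{thm:integrable_measured_laminations}, to get $\sup_n i(\nu_{\varphi},\alpha_n)<\infty$ and $\sup_n i(\nu_{\varphi},\alpha_n')<\infty$ from the two collar-integral hypotheses together with $\mathrm{mod}(\Gamma_n)\le B'$ (which uses the lower bound on the cuff lengths). The no-shared-cuff hypothesis is then used for a different purpose than the one you assign to it: since $\alpha_n$ is not a cuff of $\mathcal{P}'$, it crosses some $\alpha_m'$, so the atomic mass satisfies $\nu_{\varphi}(\alpha_n)\le i(\nu_{\varphi},\alpha_m')$, which is uniformly bounded. (In the integrable case the atoms were controlled by $\|\varphi\|_{L^1}$ via the moduli of the cylinders; that route is unavailable here, which is why the second decomposition is needed.) Finally, the Thurston bound follows combinatorially: a unit geodesic arc $I$ meets boundedly many pairs of pants of $\mathcal{P}$, and every leaf of $\nu_{\varphi}$ meeting $I$ is either one of those boundedly many cuffs or crosses one of them, so $i(\nu_{\varphi},I)$ is bounded by a fixed multiple of the uniform cuff bounds. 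You should replace the ball-area reduction by this cuff-intersection argument.
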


\section{Riemann surfaces and measured laminations}

A Riemann surface different from the Riemann sphere, the complex plane, the once punctured complex plane and the torus supports a unique conformal metric of constant curvature $-1$, called the {\it hyperbolic metric}. Equivalently, if a Riemann surface is different from the above four surfaces then its universal covering $\tilde{X}$ is holomorphic to the unit disk $\Delta$. In this case $X$ is holomorphic to $\Delta/\Gamma$, where $\Gamma$ is a Fuchsian group. The hyperbolic metric on $X$ is the projection of the hyperbolic metric $\frac{2|dz|}{1-|z|^2}$ on $\Delta$. Unless explicitly states otherwise, all the geodesics in this paper are with respect to the hyperbolic metric. 

A geodesic in $\Delta$ has two distinct endpoints on the ideal boundary $S^1=\partial_{\infty}\Delta$. Conversely, given two distinct points $a$ and $b$ on $S^1$ there is a unique geodesic whose endpoints are $a$ and $b$. The space $G(\Delta)$ of (unoriented) geodesics of $\Delta$ is identified with $(S^1\times S^1-\Delta )/\mathbb{Z}_2$, where $\Delta$ is the diagonal and the $\mathbb{Z}_2$-action send $(a,b)$ to $(b,a)$.

A {\it geodesic pair of pants} is a bordered hyperbolic surface homeomorphic to sphere minus three closed disks whose border consists of three simple closed geodesics with possibly one or two of them begin degenerated to a cusp.

A {\it geodesic lamination} $\lambda$ on $X$ is a closed subset of $X$ equipped with a foliation by complete, pairwise disjoint geodesics. A lift $\tilde{\lambda}$ of $\lambda$ to the universal covering $\Delta$ is a geodesic lamination of $\Delta$ invariant under the action of $\Gamma =\pi_1(X)$.  
The geodesic lamination $\tilde{\lambda}$ is a closed subset of $G(\Delta)$. 

A {\it measured lamination} $\tilde{\mu}$ on $\Delta$ is a Radon measure on the space of geodesics $G(\Delta)$ whose support is a geodesic lamination $\tilde{\lambda}$. If $\tilde{\mu}$ is invariant under the action of $\pi_1(X)$ then it projects to a measured lamination $\mu$ on $X$. If $I$ is a compact hyperbolic arc (possibly a closed geodesic) on $X$ then the {\it intersection number} $i(\mu ,I)$ is defined to be the total $\mu$-mass of the geodesics intersecting $I$. This can be defined by lifting $I$ to $\Delta$ and using the lifted measure $\tilde{\mu}$. Conversely, a measured lamination is completely determined by the intersection numbers with all transverse compact geodesic arcs (see \cite{Bonahon}). The space of measured laminations on $X$ is denoted by $ML(X)$. 

We define a {\it Thurston norm} of a measured lamination $\mu$ on a conformally hyperbolic Riemann surface $X$ by
$$
\|\mu\|_{Th}:=\sup_I i(\mu ,I),
$$
where the supremum is over all geodesic arcs of length $1$ (for example, see \cite{Saric}, \cite{Saric1}). The space of {\it bounded measured laminations} $ML_b(X)$ consists of all measured laminations with finite Thurston norm.

A Riemann surface is said to be {\it infinite} if its fundamental group is infinitely generated. An infinite Riemann surface necessarily supports a conformal hyperbolic metric. An infinite Riemann surface is obtained by isometrically gluing countably many geodesic pairs of pants along boundaries of equal lengths and by attaching at most countably many funnels and geodesic half-planes (see \cite{AR}, \cite{BS}).

\section{ Horizontal foliations of integrable holomorphic quadratic differentials}

Denote by $X$ an arbitrary Riemann surface equipped with a conformal hyperbolic metric. Let $\varphi$ be a non-zero  holomorphic quadratic differential on $X$ of finite norm-i.e., $\varphi (\zeta )$ is holomorphic in local coordinates $\zeta$, $\varphi (\zeta )d\zeta^2$ is invariant under coordinate change and $\int_X|\varphi (\zeta )|d\sigma (\zeta )<\infty$, where $d\sigma (\zeta )$ is the Euclidean area form in the local coordinate $\zeta$. Note that the condition $\varphi (\zeta )\neq 0$ is independent of a local chart and we call such a point {\it regular}. Otherwise we have $\varphi (\zeta )=0$ in every local chart and we call such a point {\it singular}. In fact, the order of the zero is independent of the local coordinate.

Fix $\zeta_0$ in a local chart of $X$. The map $\zeta\mapsto w:=\int_{\zeta_0}^{\zeta}\sqrt{\varphi (\xi )}d\xi$ is called a {\it natural parameter} for $\varphi$. When $\zeta_0$ is a regular point, the natural parameter is a conformal map of a neighborhood of $\zeta_0$ onto an open subset of the complex plane $\mathbb{C}$. When $\varphi$ has a zero of order $n\geq 1$ at $\zeta_0$, the natural parameter is given by $w:=\eta^{\frac{n+2}{2}}$ for a local coordinate $\eta =\eta (\zeta )$ of a neighborhood of $\zeta_0$. A neighborhood of $\zeta_0$ is divided into $n+2$ sectors and each sector is mapped by the natural parameter $w$ into a Euclidean half-disk with center the origin such that  $w(\zeta_0)=0$. The holomorphic quadratic differential $\varphi$ in the natural parameter $w$ around a regular point equals $dw^2$ and around a singular point of order $n$ equals $(\frac{n+1}{2})^2w^ndw^2$ (see \cite{Strebel}).  If $\zeta_1$ and $\zeta_2$ are two natural parameters, then $\zeta_1=\pm\zeta_2+const$.

A straight $\theta$-arc on $X$ is the pre-image of a straight Euclidean arc in the natural parameter $w$ which subtends angle $\theta$ with the positive direction of the real axis. When $\theta =0$ it is called a {\it horizontal } arc and when $\theta$ then it is called a {\it vertical} arc. Equivalently, a differentiable arc $\gamma$ is a straight $\theta$-arc if $\mathrm{arg}\ [\varphi (\gamma (t))\gamma'(t)^2]=\theta$. A differentiable arc $\gamma$ is a {\it horizontal} ({\it vertical}) arc if $\varphi (\gamma (t))\gamma'(t)^2>0$ ($\varphi (\gamma (t))\gamma'(t)^2<0$). A maximal horizontal (vertical) arc is called a horizontal (vertical) {\it trajectory}.

\subsection{Lifted holomorphic quadratic differentials on $\Delta$}

Denote by $\tilde{\varphi}$ the lift of $\varphi$ to the universal covering $\tilde{X}$. 
We fix a biholomorphic identification $\tilde{X}\equiv\Delta:=\{ z=x+iy:|z|<1\}$. The universal covering  $\Delta$ is equipped with the standard hyperbolic metric $\frac{2|dz|}{1-|z|^2}$. The lift $\tilde{\varphi}(z)$ is a holomorphic function of the variable $z\in\Delta$. 
Note that $\tilde{\varphi}$ is not of finite norm on $\Delta$ unless $X=\Delta$. 

Fix a point $z_0\in\Delta$ that is regular for $\tilde{\varphi}(z)$-i.e., $\tilde{\varphi}(z_0)\neq 0$. 
The map $z\mapsto w:=\int_{z_0}^z\sqrt{\tilde{\varphi}(\zeta )}d\zeta$  is called the {\it natural parameter} $w$ of $\tilde{\varphi}$ for $\Delta$. It is a locally conformal map away from the critical points of $\tilde{\varphi}(z)$. At a critical point has the same behavior as described on the Riemann surface $X$. A {\it horizontal} ({\it vertical}) arc on $\Delta$ is the pre-image of a Euclidean horizontal (vertical) arc in the natural parameter $w$ of $\tilde{\varphi}$. A horizontal (vertical) {\it trajectory} is a maximal horizontal (vertical) arc in $\Delta$.

Each direction of a horizontal (vertical) trajectory either accumulates to the ideal boundary $\partial_{\infty}\Delta\equiv S^1$ or to a zero of $\tilde{\varphi}$ (see \cite{Strebel}). 
A {\it regular} horizontal trajectory of $\tilde{\varphi}$ is a horizontal trajectory whose both directions accumulate to $S^1$. Marden and Strebel \cite{MardenStrebel}, \cite{Strebel} proved that all regular horizontal trajectories have exactly two accumulation points on $S^1$ accumulated by the two opposite horizontal rays. 
A horizontal trajectory is said to be  {\it critical} if at least one ray ends at a critical point of $\tilde{\varphi}$.
The set of critical horizontal trajectories is at most countable since $\tilde{\varphi}$ has at most countably many zeros in $\Delta$ and each zero has finitely many horizontal trajectories ending at it. We extend each critical horizontal trajectory at a zero of $\tilde{\varphi}$ by concatenating another critical horizontal trajectory. We continue in this fashion at each new zero of $\tilde{\varphi}$ that the concatenated trajectory meets. After an at most countably many concatenations, we obtain a simple curve consisting of an at most countably many critical horizontal trajectories whose both ends accumulate to $S^1$. We will call such curves {\it generalized} horizontal trajectories. It follows from Marden and Strebel \cite{MardenStrebel} that all generalized horizontal trajectories have two distinct accumulation points on $S^1$.

\subsection{From a horizontal foliation to a measured lamination}

The family of horizontal trajectories of $\tilde{\varphi}$ defines a singular foliation of $\Delta$ with singularities at the zeroes of $\tilde{\varphi}$. If $z$ is a zero of $\tilde{\varphi}$ of order $n$ then the foliation has an $(n+2)$-pronged singularity at $z$. 

Let $c$ be a compact rectifiable arc transverse to the foliation of $\tilde{\varphi}$. 
We define a measured foliation $\tilde{\mathcal{V}}_{\tilde{\varphi}}$ by assigning an intersection number $i(\tilde{\mathcal{V}}_{\tilde{\varphi}},c)$ of the foliation of $\tilde{\varphi}$ with any $c$,
$$
i(\tilde{\mathcal{V}}_{\tilde{\varphi}},c):=\int_c |Im(\sqrt{\tilde{\varphi}(z)}dz)|=\int_c |dv|,
$$
where the second integral is in the natural parameter $w=u+iv$ and the vertical displacement $dv$ corresponds to $Im(\sqrt{\tilde{\varphi}(z)}dz)$. 
The intersection number is invariant under homotopies that respect the leaves of the foliation and $\tilde{\mathcal{V}}_{\tilde{\varphi}}$ is called a {\it horizontal measured foliation} of $\tilde{\varphi}$.
 
 Let $z_1,z_2\in\Delta$. We define the {\it vertical distance} $h_{\tilde{\varphi}}(z_1,z_2)$ between $z_1$ and $z_2$ by
 $$
 h_{\tilde{\varphi}}(z_1,z_2)=\inf_c\int_c |Im(\sqrt{\tilde{\varphi}(z)}dz)|
 $$
 where the infimum is over all rectifiable curves $c$ that connect $z_1$ and $z_2$. Note that the vertical distance is a pseudo-metric as the vertical distance of two points on the same horizontal trajectory is zero.
 
Let $\beta$ be a vertical interval not containing a critical point of $\tilde{\varphi}$. The {\it horizontal strip} $\mathcal{S}(\beta )$ for $\beta$ is the union of all regular and critical horizontal trajectories that have a point in common with $\beta$. Note that $\mathcal{S}(\beta )$ does not contain critical points since critical horizontal trajectories end at critical points but they do not contain them.
 By \cite[Theorem 19.2, page 87]{Strebel}, there exists countably many open vertical intervals 
 $\{\beta_i\}_i$ such that the open strips $\{\mathcal{S}(\beta_i)\}_i$ are mutually disjoint and cover $\Delta$ with the exception of countably many critical horizontal trajectories and critical points of $\tilde{\varphi}$.

The horizontal measured foliation $\tilde{\mathcal{V}}_{\tilde{\varphi}}$ is the lift to $\Delta$ of the horizontal measured foliation $\mathcal{V}_{\varphi}$ of $\varphi$ on $X$.  Given a regular (or a generalized) horizontal trajectory $h$ of $\tilde{\varphi}$, denote by $G(h)$ the geodesic of $\Delta$ that has the same endpoints on $\partial_{\infty}\Delta=S^1$ as $h$. 
We assign to $\tilde{\mathcal{V}}_{\tilde{\varphi}}$ a geodesic lamination $\tilde{\lambda}_{\tilde{\varphi}}$ by replacing each regular (or generalized) horizontal trajectory $h$ of $\tilde{\varphi}$ with a hyperbolic geodesic $G(h)$ and taking the closure of the obtained set of geodesics in $G(\Delta)$. By construction, $\tilde{\lambda}_{\tilde{\varphi}}$ is invariant under the action of the fundamental group $\pi_1(X)=\Gamma$ and it projects to a geodesic lamination $\lambda_{\varphi}$ of $X$.

In order to define a measured lamination $\nu_{\varphi}$ whose support is $\lambda_{\varphi}$, we consider the lift $\tilde{\lambda}_{\tilde{\varphi}}$ of $\lambda_{\varphi}$ to $\Delta$.  If $g\in\tilde{\lambda}_{\tilde{\varphi}}$ is not a lift of a closed geodesic on $X$, then Marden and Strebel \cite[Theorem 2(c)]{MardenStrebel} proved that there exists a unique horizontal trajectory $h$ in $\tilde{\mathcal{V}}_{\tilde{\varphi}}$ such that $G(h)=g$. The horizontal trajectory $h$ is either regular or generalized. If $g\in\tilde{\lambda}_{\tilde{\varphi}}$ is a lift of a closed geodesic then the set of horizontal trajectories 
which have the same endpoints on $S^1$ as $g$ is a component of the lift of a ring domain for $\varphi$ on $X$. The two extreme horizontal trajectories of this set are generalized trajectories which are lifts of the boundary components of the ring domain on $X$ corresponding to the closed geodesic.

Let $I$ be a compact geodesic arc transverse to $\tilde{\lambda}_{\tilde{\varphi}}$. Denote by $\tilde{\lambda}_{\tilde{\varphi}}^I$ the set of geodesics in $\tilde{\lambda}_{\tilde{\varphi}}$ that intersect $I$. Let $g_1$ and $g_2$ be geodesics in $\tilde{\lambda}_{\tilde{\varphi}}^I$ such that any geodesic $g\in\tilde{\lambda}_{\tilde{\varphi}}^I$ is between $g_1$ and $g_2$.
If $g_i$, for $i=1,2$, is not a lift of a closed geodesic on $X$, denote by $h_i$ the unique horizontal trajectory such that $G(h_i)=g_i$. If $g_i$ is a lift of a closed geodesic on $X$, 
denote by $h_i$ one of the two horizontal trajectories on the boundary of the lift of the ring domain such that all horizontal trajectories that correspond to the geodesics of $\tilde{\lambda}_{\tilde{\varphi}}^I$ are on one side of $h_i$. By our construction, a horizontal trajectory gives a geodesic of $\tilde{\lambda}_{\tilde{\varphi}}^I$ if and only if it separates $h_1$ and $h_2$.

Let $\mathcal{S}(\beta_1)$ and $\mathcal{S}(\beta_2)$ be two horizontal strips from the fixed family $\{\mathcal{S}(\beta_i)\}_i$ that covers $\Delta$ which contain $h_1$ and $h_2$ in their interiors or on their boundaries. Let $\{\mathcal{S}(\beta_{i_j})\}_{j=1}^{j_0}$, $j_0\leq\infty$, be the remaining horizontal strips that consist of horizontal trajectories which separate $h_1$ and $h_2$. Let $b_{i_j}=\int_{\beta_{i_j}}|dv|$ be the height of the horizontal strip $\mathcal{S}(\beta_{i_j})$-i.e., the length of $\beta_{i_j}$. Let $b_1$ be the height of the part of the strip $\mathcal{S}(\beta_1)$ that consists of horizontal trajectories which separate $h_1$ and $h_2$. We use analogous definition for $b_2$ and $\mathcal{S}(\beta_2)$. Then we define
$$
i(\tilde{\nu}_{\tilde{\varphi}},I):=b_1+b_2+\sum_{j=1}^{j_0} b_{i_j}.
$$
Since we defined the intersection with any compact geodesic arc and the intersection is invariant under the homotopy which preserves the leaves of $\tilde{\lambda}_{\tilde{\varphi}}$, we obtained a measured lamination $\tilde{\nu}_{\tilde{\varphi}}$ whose support is the geodesic lamination $\tilde{\lambda}_{\tilde{\varphi}}$. The measured lamination $\tilde{\nu}_{\tilde{\varphi}}$ is invariant under the action of $\pi_1(X)$. Thus we obtained a measured lamination $\nu_{\varphi}$ on $X$, called a {\it horizontal measured lamination}, whose support is the geodesic lamination $\lambda_{\varphi}$.
Since the height distances are countably additive, it follows that $\nu_{\varphi}$ is independent of the choice of the covering of $\Delta$ by a countable family of horizontal strips.

A closed horizontal trajectory on a Riemann surface $X$ is a part of a maximal ring domain of closed homotopic horizontal trajectories. The boundary of the maximal ring domain contains critical points and critical trajectories (see \cite{Strebel}). A regular horizontal trajectory is called a {\it spiral} if both its ends accumulate to the trajectory itself. Fix an open oriented vertical arc $\beta$ and denote by $\beta^{+}$ and $\beta^{-}$ the two sides of $\beta$. The {\it spiral set} $\mathscr{S}(\beta )$ determined by $\beta$ is the set of horizontal spirals which intersect $\beta$. Each $\alpha\in \mathscr{S}(\beta )$ is divided into countably many components $\alpha_n$ by the points $\alpha\cap\beta$. Each component $\alpha_n$ either has its endpoints on the two sides of $\beta$ (called the first kind), or it has both endpoints on $\beta^{+}$ or on $\beta^{-}$ (called the second kind). The spiral set is partitioned into an at most countably many rectangular sets of components of the spirals of either the first or second kind. A rectangular set of the first kind is a measurable set of components of spirals inside a rectangle with one vertical side on one side of $\beta$ and  the other vertical side on the other side of $\beta$ and no other points in common with $\beta$. A rectangular set of the second kind is a measurable set of components of spirals inside a rectangle with both vertical sides on one side of $\beta$ and no other points in common with $\beta$ (see \cite[Section 13.6]{Strebel}). A horizontal trajectory is called a {\it cross-cut} if both of its ends accumulate to infinity of $X$. The cross-cut trajectories belong to an at most countable family of strips of horizontal cross-cuts (see \cite[Section 13.5]{Strebel}).
Finally, the definitions for  horizontal trajectories work work in the same manner for vertical trajectories,

When $\varphi$ is integrable, the set of horizontal trajectories consists of closed horizontal trajectories, recurrent horizontal trajectories, boundary horizontal trajectories and a set of zero $\varphi$-area (see \cite[Section 13]{Strebel}).

When $\pi_1(X)=\Gamma$ is a Fuchsian group of the first kind, the holomorphic quadratic differential $\varphi$ is allowed to be non-integrable. Marden and Strebel \cite{MardenStrebel} proved that each horizontal trajectory still has two different endpoints on $\partial_{\infty}\tilde{X}$ and we  define a corresponding geodesic lamination $\lambda_{\varphi}$ as above. The horizontal measured lamination $\nu_{\varphi}$ is well-defined.

We define the notion of a height of a homotopy class of a closed curve (see \cite{Strebel}, \cite{MardenStrebel1}).

\begin{definition}
Let $X$ be an infinite Riemann surface and $\gamma$ a homotopically non-trivial and boundary non-homotopic simple closed curve on $X$. The height of the homotopy class $[\gamma ]$ of $\gamma$ is given by
$$
h_{\varphi}([\gamma ] ):=\inf_{\gamma_1\sim\gamma}\int_{\gamma_1}dv,
$$
where the infimum is over all simple closed curves $\gamma_1$ homotopic to $\gamma$ and $u+iv$ is the natural parameter for $\varphi$.
\end{definition}

We will need a notion of a step curve.

\begin{definition}
A curve $s$ which is obtained by concatenation of finitely many horizontal and vertical intervals is called a {\it step curve}.
\end{definition}

The following lemma establishes the correspondence between the heights $h_{\varphi}$ of the homotopy classes of simple closed curves and the intersections of simple closed geodesics with $\nu_{\varphi}$.

\begin{lem}
\label{lem:heights-intersections}
Let  $\varphi$ be a holomorphic quadratic differential on a Riemann surface $X$. We assume that either $\varphi$ is integrable or the fundamental group of $X$ is of the first kind. If $\gamma$ is a simple closed geodesic on $X$ then
$$
h_{\varphi}([\gamma ] )= i(\nu_{\varphi},\gamma ).
$$
\end{lem}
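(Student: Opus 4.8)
The plan is to lift everything to $\Delta$ and to show that both sides equal the total transverse measure of the band of horizontal trajectories that separate the two fixed points of the deck transformation representing $[\gamma]$. Fix the hyperbolic element $g\in\Gamma$ whose axis $\tilde{\gamma}$ projects to $\gamma$, let $a,b\in S^1$ be the endpoints of $\tilde{\gamma}$, and let $I\subset\tilde{\gamma}$ be a fundamental segment with endpoints $q$ and $gq$. By the $\Gamma$-invariance of $\tilde{\nu}_{\tilde{\varphi}}$ and the fact that $I$ is a fundamental segment for the $g$-action, we have $i(\nu_{\varphi},\gamma)=i(\tilde{\nu}_{\tilde{\varphi}},I)$, which by the definition preceding the lemma is the sum $b_1+b_2+\sum_j b_{i_j}$ of the heights of the portions of horizontal strips consisting of trajectories separating the extreme trajectories $h_1,h_2$. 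Our goal is thus to prove $h_{\varphi}([\gamma])=i(\tilde{\nu}_{\tilde{\varphi}},I)$ by establishing the two inequalities.

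For the lower bound, take any simple closed curve $\gamma_1\sim\gamma$ and lift it to an arc $\tilde{\gamma}_1$ from a point $z_0$ to $gz_0$. Since, by Marden and Strebel \cite{MardenStrebel}, every regular and generalized horizontal trajectory has two distinct endpoints on $S^1$, straightening preserves the cyclic order of endpoints, so each geodesic $G(h)$ crossing $I$ has its two endpoints in the two distinct arcs of $S^1$ cut out by $a$ and $b$; consequently the trajectory $h$ itself separates $z_0$ from $gz_0$ and must be crossed by $\tilde{\gamma}_1$. Hence $\tilde{\gamma}_1$ traverses each horizontal strip $\mathcal{S}(\beta_{i_j})$ whose trajectories separate $h_1$ and $h_2$, accumulating vertical variation at least $b_{i_j}$ on each, and at least $b_1,b_2$ on the relevant parts of $\mathcal{S}(\beta_1),\mathcal{S}(\beta_2)$. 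By the countable additivity of the vertical heights, $\int_{\gamma_1}dv=\int_{\tilde{\gamma}_1}|Im(\sqrt{\tilde{\varphi}}\,dz)|\geq b_1+b_2+\sum_j b_{i_j}=i(\tilde{\nu}_{\tilde{\varphi}},I)$, and taking the infimum over $\gamma_1$ gives $h_{\varphi}([\gamma])\geq i(\nu_{\varphi},\gamma)$.

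For the upper bound I would produce a step curve realizing the sum. Using the covering of $\Delta$ by horizontal strips from \cite[Theorem 19.2]{Strebel}, I cross each contributing strip $\mathcal{S}(\beta_{i_j})$, and the relevant parts of $\mathcal{S}(\beta_1),\mathcal{S}(\beta_2)$, along a single vertical segment, which contributes exactly the height $b_{i_j}$ to $\int dv$, and I connect consecutive vertical segments by horizontal segments, which contribute nothing. Concatenating these intervals produces a step curve $\tilde{\gamma}_1$ from $z_0$ to $gz_0$, hence a closed step curve $\gamma_1\sim\gamma$, with $\int_{\gamma_1}dv=b_1+b_2+\sum_j b_{i_j}=i(\tilde{\nu}_{\tilde{\varphi}},I)$ in the limit when $j_0=\infty$. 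Therefore $h_{\varphi}([\gamma])\leq i(\nu_{\varphi},\gamma)$, which together with the lower bound proves the lemma.

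The main obstacle is the upper-bound construction: one must route the step curve so that it crosses each separating strip exactly once and still closes up in the class $[\gamma]$, which requires care where the trajectories are recurrent spirals or bound a ring domain of closed trajectories (so that the core geodesic carries an atom of $\nu_{\varphi}$ equal to the height of the ring), near the critical points and critical trajectories of $\tilde{\varphi}$ (a set of zero height that can be avoided), and when infinitely many strips contribute, where a limiting argument together with the countable additivity of heights is needed. The lower bound hinges on the separation statement that the straightened geodesic of a trajectory crossing $I$ forces the trajectory itself to separate $z_0$ from $gz_0$; this is precisely where the Marden--Strebel theorem that each trajectory has two distinct endpoints on $S^1$ is indispensable.
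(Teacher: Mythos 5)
Your lower bound is essentially the paper's argument (any competitor must cross every trajectory whose straightened geodesic meets $I$, so its height dominates the sum of the separating strip heights), with one loose point: a trajectory $h$ separating $a$ from $b$ crosses the full bi-infinite lift of $\gamma_1$, not necessarily the fundamental segment from $z_0$ to $gz_0$; you need the standard counting modulo the deck transformation $g$ (each separating trajectory has exactly one $\langle g\rangle$-translate crossing the fundamental arc) to conclude. The paper sidesteps this by applying the "must cross every separating trajectory" observation to a specific $\epsilon$-minimizing step curve joining $h_1$ to $h_2$, furnished by \cite[Theorem 24.1]{Strebel}, rather than to an arbitrary competitor.

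The genuine gap is in the upper bound. You propose to build a step curve from scratch by crossing each contributing strip once vertically and joining consecutive crossings by horizontal arcs, and you yourself list the obstacles — closing up in the class $[\gamma]$, recurrent spirals, ring domains, infinitely many contributing strips — but these are not side issues to be dispatched later; they are the entire content of the inequality, and your construction does not survive them. In particular, when infinitely many strips separate $h_1$ and $h_2$ the concatenation has infinitely many segments and is not a step curve at all, and there is no reason a purely horizontal arc connects one separating strip to the "next" one (the strips are separated by critical trajectories and possibly by non-separating strips, and a horizontal connector must follow a leaf, which may never reach the next strip within a compact arc). The paper's proof runs the construction in the opposite direction: it starts from the $\epsilon$-minimizing step curve $s_\epsilon$ of \cite[Theorem 24.1]{Strebel}, which already is a closed competitor in $[\gamma]$ connecting $h_1$ to $h_2$, and then performs two local surgeries to remove excess height — rerouting along a boundary leaf any excursion into a strip that does not separate $h_1$ and $h_2$, and replacing the passage through a strip that separates $h_1$ and $h_2$ but whose individual leaves do not by a long horizontal arc with two vertical arcs of total height controlled by $\epsilon/2^j$. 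This yields a competitor of height at most $i(\nu_\varphi,\gamma)+\epsilon$ without ever having to assemble a curve out of the strips. To complete your proof you would need either to carry out this modification argument or to supply a genuinely new mechanism for making your concatenation close up; as written, the upper bound is a plan, not a proof.
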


\begin{proof}
Since $\gamma$ is a simple closed geodesic it follows that it is not null-homotopic or homotopic to a puncture.
We lift $\varphi$ to a holomorphic quadratic differential $\tilde{\varphi}$ on $\Delta$. Let $\tilde{\gamma}\subset\Delta$ be a single compact arc that covers $\gamma$ injectively except at its endpoints. In addition, we assume that two regular horizontal trajectories $h_1$ and $h_2$ go through the endpoints of $\tilde{\gamma}$. By \cite[Page 152, Theorem 24.1]{Strebel}, for every $\epsilon >0$ there is a step curve $s_{\epsilon}$ connecting $h_1$ and $h_2$ such that 
$$
h_{{\varphi}}([{\gamma} ])\geq \int_{s_{\epsilon}}|dv|-\epsilon .
$$
Recall that we defined $i(\tilde{\nu}_{\tilde{\varphi}}, \tilde{\gamma})$ to be the total height of the horizontal trajectories that separate $h_1$ and $h_2$. Since $s_{\epsilon}$ has to intersect each horizontal trajectory that separates $h_1$ and $h_2$ we have that $i(\tilde{\nu}_{\tilde{\varphi}}, \tilde{\gamma})\leq \int_{s_{\epsilon}}|dv|$. Combining the above two inequalities and letting $\epsilon\to 0$ gives
$$
h_{\varphi}(\gamma )\geq i(\nu_{\varphi},\beta )
$$
since $i(\tilde{\nu}_{\tilde{\varphi}},\tilde{\gamma})=i(\nu_{\varphi},\gamma )$.

\begin{figure}[h]
\leavevmode \SetLabels
\L(.48*.18) $h_1$\\
\L(.48*.74) $h_2$\\
\endSetLabels
\begin{center}
\AffixLabels{\centerline{\epsfig{file =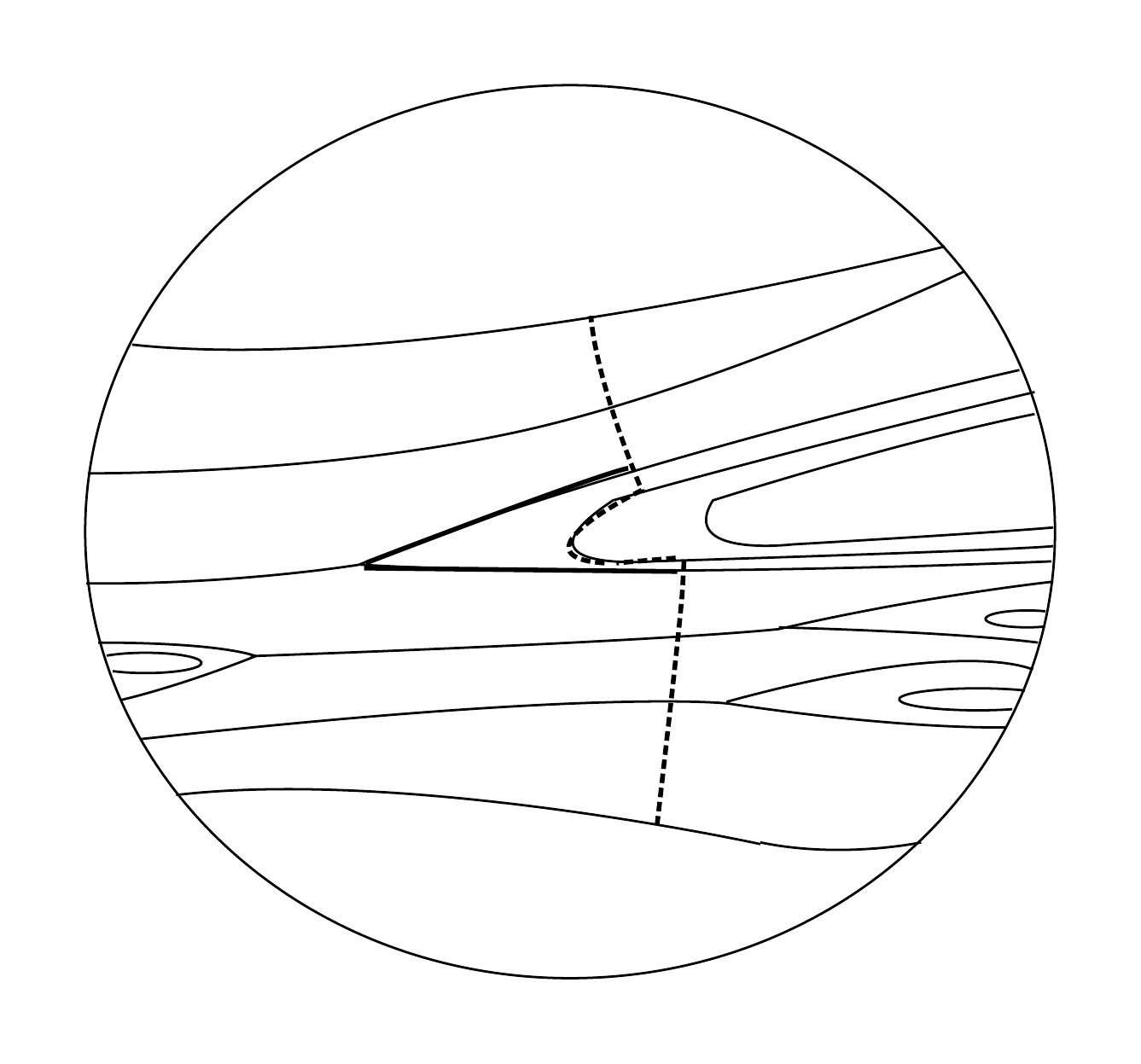,width=7.0cm,angle=0} }}
\vspace{-20pt}
\end{center}
\caption{A strip which does not separate $h_1$ and $h_2$. The dotted curve is $s_{\epsilon}$ and the bold curve is the replacement for the part of $s_{\epsilon}$ to obtain $s_{\epsilon}'$.}
\end{figure}

It remains to prove the opposite inequality. We consider the above step curve $s_{\epsilon}$ that satisfies $\int_{s_{\epsilon}}|dv|\geq h_{{\varphi}}([{\gamma}] )\geq \int_{s_{\epsilon}}|dv|-\epsilon$. We also recall that $i(\tilde{\nu}_{\tilde{\varphi}}, \tilde{\gamma})$ is the sum of the heights of the horizontal strips whose horizontal trajectories separate $h_1$ and $h_2$.

The step curve $s_{\epsilon}$ might cross a horizontal strip $\tilde{S}(\beta_j)$ whose horizontal trajectories do not separate $h_1$ and $h_2$. There are two cases to consider: either $\tilde{S}(\beta_j)$ does not separate $h_1$ and $h_2$, or  $\tilde{S}(\beta_j)$ separates $h_1$ and $h_2$ while each horizontal trajectory in $\tilde{S}(\beta_j)$ does not separate $h_1$ and $h_2$.
In the former case, one boundary component $\partial_1 \tilde{S}(\beta_j)$ separates  $\tilde{S}(\beta_j)$ from both $h_1$ and $h_2$. Then $s_{\epsilon}$ crosses $\partial_1 \tilde{S}(\beta_j)$ to enter $\tilde{S}(\beta_j)$ and then crosses it again to exit $ \tilde{S}(\beta_j)$ (see Figure 1). 
We modify $s_{\epsilon}$ by replacing the subcurve that enters $\tilde{S}(\beta_j)$ from $\partial_1 \tilde{S}(\beta_j)$ and then exists it from the same boundary component $\partial_1 \tilde{S}(\beta_j)$ by an arc along the boundary component $\partial_1 \tilde{S}(\beta_j)$ to obtain a step curve with a smaller height. We perform this modification for each horizontal strip of the given type and denote the modified curve by $s_{\epsilon}'$.

\begin{figure}[h]
\leavevmode \SetLabels
\L(.48*.18) $h_1$\\
\L(.55*.28) $h_1'$\\
\L(.4*.7) $h_2'$\\
\L(.53*.83) $h_2$\\
\endSetLabels
\begin{center}
\AffixLabels{\centerline{\epsfig{file =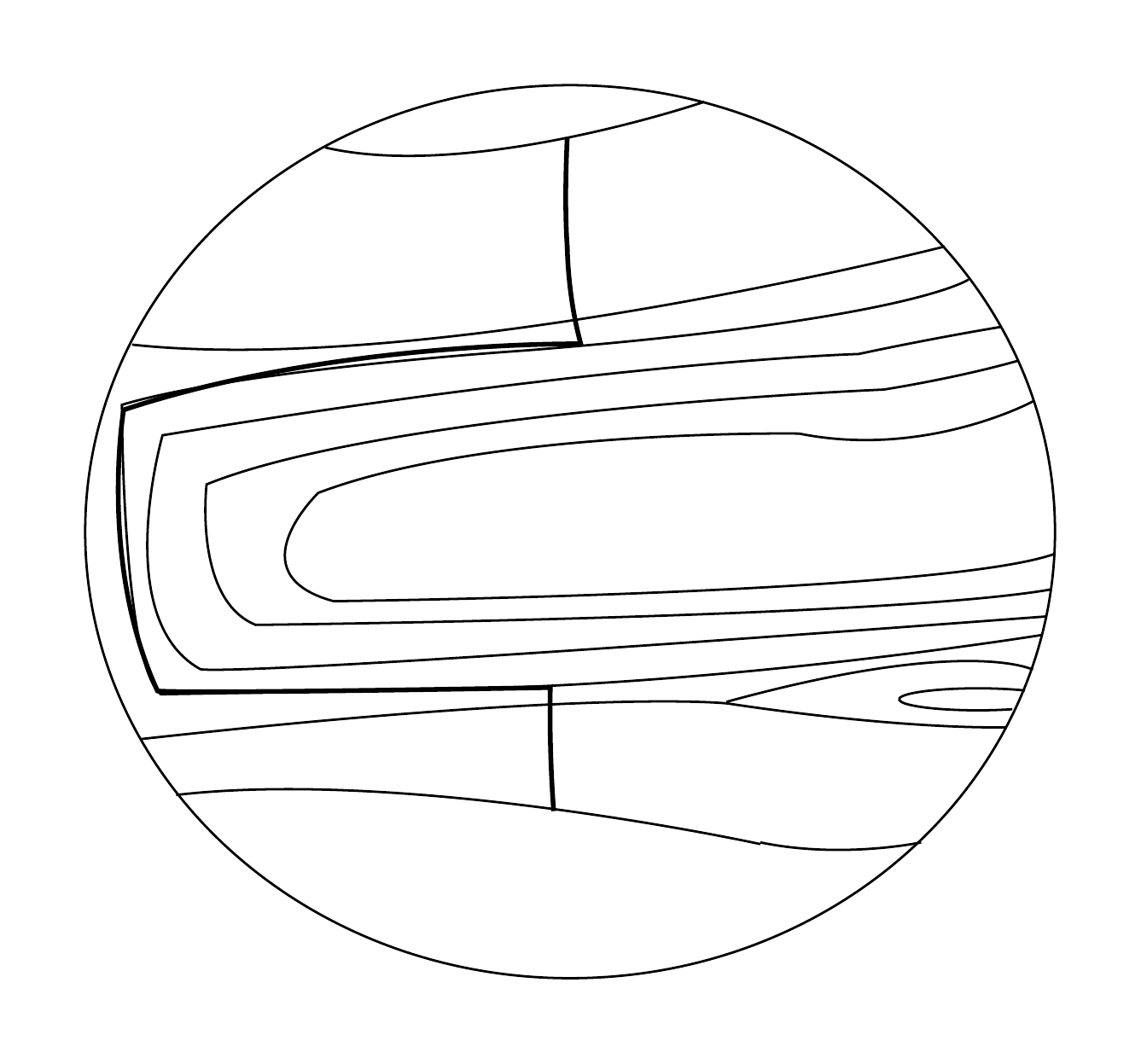,width=7.0cm,angle=0} }}
\vspace{-20pt}
\end{center}
\caption{A strip which separates $h_1$ and $h_2$ whose horizontal arcs do not separate $h_1$ and $h_2$.}
\end{figure}

We assume now that some strip that $s_{\epsilon}'$ crosses is in the later case. Namely $s_{\epsilon}'$ crosses a strip $\tilde{S}(\beta_j)$ that separates $h_1$ and $h_2$ while each horizontal trajectory in $\tilde{S}(\beta_j)$ does not separate $h_1$ and $h_2$ (see Figure 2). We modify $s_{\epsilon}'$ by replacing $s_{\epsilon}'\cap\mathcal{S}(\beta_j)$ with a step curve that starts by a short vertical arc from one boundary component followed by a long horizontal arc and then connected to the other boundary component by a short vertical arc (see Figure 2). We can arrange that the vertical arcs have arbitrary small heights. 
There are at most countably many strips of this kind and we arrange that the total height of all modified arcs is less than $\epsilon$. The new step curve is denoted by $s_{\epsilon}''$ and it has the property that the height of the part of $s_{\epsilon}''$ that crosses strips which consist of trajectories which do not separate $h_1$ and $h_2$ is at most $\epsilon$. The horizontal trajectories which do not separate $h_1$ and $h_2$ correspond to the geodesics of $\tilde{\lambda}_{\tilde{\varphi}}$ that do not cross $I$. We finally modify $s_{\epsilon}''$ such that the heights of the parts crossing the strips containing the  trajectories which separate $h_1$ and $h_2$ have the heights equal to the heights of the strips. Then we have
$$
h_{{\varphi}}(\beta )\leq \int_{s_{\epsilon}''}|dv|\leq i(\nu_{\varphi},\beta )+\epsilon .
$$
By letting $\epsilon \to 0$ we obtain $h_{{\varphi}}(\beta )\leq i(\nu_{\varphi},\beta )$ and the lemma is proved.
\end{proof}

The above lemma established that the height of a homotopy class of a simple closed curve is equal to the intersection number of the corresponding closed geodesic with the horizontal measured lamination.  We will need the corresponding statement for the intersection of a cross-cut vertical trajectory  with ${\mathcal{V}}_{{\varphi}}$ and the intersection number between the corresponding infinite geodesic on $X$ and the horizontal measured lamination ${\nu}_{{\varphi}}$.

\begin{lem}
\label{lem:cross-cut-height}
Let $X$ be a hyperbolic Riemann surface and $\varphi$ an integrable holomorphic quadratic differential on $X$. If $\gamma$ is a cross-cut vertical trajectory and $g=G(\gamma )$ is its corresponding hyperbolic geodesic then
$$
i({\mathcal{V}}_{{\varphi}}, \gamma )=i(\nu_{\varphi},g).
$$
\end{lem}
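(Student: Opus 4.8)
The plan is to lift everything to $\Delta$ and reduce both sides of the claimed identity to the same combinatorial sum over the horizontal strips $\{\mathcal{S}(\beta_i)\}_i$. Let $\tilde{\gamma}$ be a lift of the cross-cut vertical trajectory $\gamma$. Since both ends of $\gamma$ run to infinity of $X$, its lift $\tilde{\gamma}$ is a cross-cut vertical trajectory of $\tilde{\varphi}$, and by the Marden--Strebel theorem applied to vertical trajectories it has two distinct endpoints $a,b\in S^1$. Then $\tilde{g}=G(\tilde{\gamma})$ is the geodesic from $a$ to $b$, and since both $\mathcal{V}_{\varphi}$ and $\nu_{\varphi}$ are defined through their $\pi_1(X)$-invariant lifts, it suffices to prove $i(\tilde{\mathcal{V}}_{\tilde{\varphi}},\tilde{\gamma})=i(\tilde{\nu}_{\tilde{\varphi}},\tilde{g})$. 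Because $\tilde{\gamma}$ is a vertical arc we have $du\equiv 0$ along it, so that $i(\tilde{\mathcal{V}}_{\tilde{\varphi}},\tilde{\gamma})=\int_{\tilde{\gamma}}|dv|$ is simply the total vertical variation of $\tilde{\gamma}$.

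First I would establish the crossing equivalence: a regular or generalized horizontal trajectory $h$ meets $\tilde{\gamma}$ exactly when $h$ separates $a$ from $b$, which is in turn exactly when $G(h)$ crosses $\tilde{g}$. The equivalences are topological, since $\tilde{\gamma}$ is a simple arc joining $a$ and $b$ and hence cuts $\Delta$ into two pieces: a horizontal trajectory $h$ with endpoints $c,d$ meets $\tilde{\gamma}$ iff $\{c,d\}$ separates $\{a,b\}$ on $S^1$, and this is precisely the condition for $G(h)$ to cross $\tilde{g}=G(\tilde{\gamma})$, which shares the endpoints $a,b$. The crucial refinement is that $h$ and $\tilde{\gamma}$ meet at most once: a second intersection would bound a bigon whose two sides are a horizontal and a vertical sub-arc, which is impossible for the transverse pair of measured foliations of a holomorphic quadratic differential, all of whose singularities are at least three-pronged and hence of non-positive index. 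Consequently the strips $\mathcal{S}(\beta_i)$ met by $\tilde{\gamma}$ are precisely those containing a trajectory separating $a$ and $b$, i.e. exactly the strips entering the definition of $i(\tilde{\nu}_{\tilde{\varphi}},\tilde{g})$.

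Next I would compute the contribution of each such strip. Since $v$ is strictly monotone along the vertical trajectory $\tilde{\gamma}$, the intersection $\tilde{\gamma}\cap\mathcal{S}(\beta_i)$ is a single sub-arc crossing each separating horizontal trajectory of that strip once, so its contribution to $\int_{\tilde{\gamma}}|dv|$ equals the $|dv|$-height of the sub-family of trajectories of $\mathcal{S}(\beta_i)$ separating $a$ and $b$. For a strip all of whose trajectories separate $a,b$ this is the full height $b_i=\int_{\beta_i}|dv|$, while for the two extreme strips $\mathcal{S}(\beta_1)$ and $\mathcal{S}(\beta_2)$ only a part separates and the contributions are exactly the partial heights $b_1,b_2$ appearing in the construction of $\tilde{\nu}_{\tilde{\varphi}}$. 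Summing over all strips met by $\tilde{\gamma}$ yields $\int_{\tilde{\gamma}}|dv|=b_1+b_2+\sum_j b_{i_j}$, which is by definition $i(\tilde{\nu}_{\tilde{\varphi}},\tilde{g})$.

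Since $\tilde{g}$ and $\tilde{\gamma}$ are bi-infinite, I would make the final step rigorous by exhausting $\tilde{\gamma}$ by compact vertical sub-arcs $\tilde{\gamma}_n$ and $\tilde{g}$ by the matching compact geodesic arcs $I_n$ crossing the same finite families of strips, applying the finite identity to each pair, and passing to the limit by the countable additivity of the strip heights noted after the construction of $\nu_{\varphi}$; both sides increase to the same value, finite or $+\infty$. The main obstacle I anticipate is the bookkeeping in the middle two steps: proving that $\tilde{\gamma}$ meets each strip in a single arc of the predicted height, i.e. combining the no-bigon property with the strict $v$-monotonicity, and handling the two extreme strips so that the partial heights collected by $\tilde{\gamma}$ coincide precisely with the quantities $b_1,b_2$ in the definition of $\tilde{\nu}_{\tilde{\varphi}}$. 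Here the step-curve modification argument from Lemma~\ref{lem:heights-intersections} can be reused to rule out stray re-entries of approximating curves into non-separating strips and to pin down the two extreme contributions.
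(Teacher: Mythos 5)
Your proposal is correct and follows essentially the same route as the paper: lift to $\Delta$, use the fact that a horizontal trajectory meets the vertical cross-cut at most once (you argue via bigons, the paper cites Strebel's uniqueness of the geodesic connection) together with the endpoint-separation criterion to match the trajectories crossed by $\tilde{\gamma}$ with the geodesics crossing $\tilde{g}$, reduce to the strip decomposition on compact subarcs, and pass to the limit by countable additivity. The paper phrases the exhaustion via sequences of horizontal trajectories $h_i^n$ accumulating to the endpoints of $\tilde{g}$ and invokes the method of Lemma \ref{lem:heights-intersections} for the compact pieces, but this is the same argument in slightly different packaging.
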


\begin{proof}
We lift $\gamma$ to $\tilde{\gamma}$ and $g$ to $\tilde{g}$ in the universal covering $\Delta$. Denote by $H_1$ and $H_2$ the two endpoints of $\tilde{g}$ on $S^1$. Let $h_1^n$ and $h_2^n$ be  horizontal trajectories (possibly singular) that are not lifts of closed horizontal trajectories belonging to an interior of a horizontal cylinder of $\varphi$. In addition, we assume that $h_i^{n}$ accumulate to $H_i$ as $n\to\infty$, and that $h_{i}^n$ separates $h_i^{n-1}$ and $h_i^{n+1}$. 

Consider the infimum of the heights of all arcs connecting $h_1^n$ and $h_2^n$ and denote it by $a^n_{\varphi}$.  Denote by $\tilde{\gamma}_n$ the subarc of $\tilde{\gamma}$ that connects $h_1^n$ and $h_2^n$. Since every horizontal arc that separates $h_1^n$ and $h_2^n$ must intersect $\tilde{\gamma}_n$ and every horizontal arc can intersect $\tilde{\gamma}$  at most once (by the uniqueness of the geodesic connection \cite[Page 75, Theorem 16.1]{Strebel}, it follows that
$$
a^n_{\varphi}=\int_{\tilde{\gamma}_n}|dv|.
$$

Let $g_n$ be a geodesic arc connecting the geodesics $G(h_1^n)$ and $G(h_2^n)$. 
The method of the proof of Lemma \ref{lem:heights-intersections} gives that 
$$i(\tilde{\mathcal{V}}_{\tilde{\varphi}},\tilde{\gamma}_n)=i(\tilde{\nu}_{\tilde{\varphi}},g_n).
$$
By the additivity we have $i(\tilde{\mathcal{V}}_{\tilde{\varphi}},\tilde{\gamma})=\lim_{n\to\infty}i(\tilde{\mathcal{V}}_{\tilde{\varphi}},\tilde{\gamma}_n)$ and $i(\tilde{\nu}_{\tilde{\varphi}},g)=\lim_{n\to\infty}i(\tilde{\nu}_{\tilde{\varphi}},g_n)$, and the lemma follows.
\end{proof}

\section{Intersection numbers and measured laminations on infinite surfaces}

A surface is said to be {\it infinite} if its fundamental group is infinitely generated. 
This section extends the classical theorem on the intersection numbers for closed surfaces to infinite surfaces whose fundamental group is of the first kind.

Let $X$ be an infinite Riemann surface that is equal to its convex core $\mathcal{C}(X)$. Equivalently, the action of the fundamental group $\Gamma =\pi_1(X)$ on the universal covering $\Delta=\tilde{X}$ is of the first kind (see \cite{BS}). 
Let $\mu$ be a measured lamination on $X$. For a simple closed geodesic $\gamma$, let $i(\mu ,\gamma )$ be the geometric intersection number of $\gamma$ with the measured lamination $\mu$. Equivalently, $i( \mu ,\gamma )$ is the transverse measure of $\mu$ deposited to a simple closed geodesic $\gamma$. We prove that if two measured laminations $\mu_1$ and $\mu_2$ have the same intersection numbers with each simple, closed geodesic $\gamma$ then they are equal.

\begin{thm}
\label{thm:intersection-unique}
Let $X$ be an infinite Riemann surface such that the action of its fundamental group on $\Delta$ is of the first kind. Let $\mu_1$ and $\mu_2$ be two measured laminations on $X$ such that 
$$
i( \mu_1,\gamma )=i( \mu_2, \gamma )
$$
for all simple closed geodesics $\gamma$ on $X$. Then
$$
\mu_1 =\mu_2.
$$
\end{thm}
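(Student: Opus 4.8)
The plan is to reduce the statement to the known fact that a measured lamination is determined by its intersection numbers with all compact transverse geodesic arcs (\cite{Bonahon}), and then to recover those arc-intersection numbers from intersections with simple closed geodesics, using crucially that $\Gamma=\pi_1(X)$ is of the first kind. First I would lift $\mu_1,\mu_2$ to $\Gamma$-invariant Radon measures $\tilde{\mu}_1,\tilde{\mu}_2$ on $G(\Delta)$, each supported on a geodesic lamination. For a compact geodesic arc $\tilde{I}\subset\Delta$ let $B(\tilde{I})\subset G(\Delta)$ denote the \emph{box} of geodesics that cross $\tilde{I}$; then $i(\mu_j,I)=\tilde{\mu}_j(B(\tilde{I}))$ for a lift $\tilde{I}$ of a transverse arc $I$ on $X$. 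By \cite{Bonahon} it suffices to prove $\tilde{\mu}_1(B(\tilde{I}))=\tilde{\mu}_2(B(\tilde{I}))$ for every such arc. Since the leaves of a lamination are pairwise disjoint, the leaves meeting $\tilde{I}$ are linearly ordered along $\tilde{I}$ and the box $B(\tilde{I})$ is bounded by two extreme leaves $g_1,g_2$; after perturbing the endpoints of $I$ I may assume these bounding leaves carry no atom, so that $B(\tilde{I})$ is simultaneously approximated from inside and from outside by boxes with the same pair of extreme leaves.

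The heart of the argument is to realize such boxes through simple closed geodesics. Here I would use that, because $\Gamma$ is of the first kind, its limit set is all of $S^1$, so that the endpoints on $S^1$ of the lifts of simple closed geodesics are dense; combined with the description of $X$ as an isometric gluing of geodesic pairs of pants (\cite{AR},\cite{BS}), every compact geodesic arc can be completed, through the surface, to simple closed geodesics whose lifts fellow-travel $\tilde{I}$ on a segment comparable to $\tilde{I}$. The difficulty is that $i(\mu_j,\gamma)=\tilde{\mu}_j(B(\tilde{\gamma}_0))$ counts the leaves crossing an \emph{entire} fundamental segment $\tilde{\gamma}_0$ of the lift of $\gamma$, not only those crossing the part near $\tilde{I}$. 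To isolate the local contribution I would work with pairs $\gamma,\gamma'$ of simple closed geodesics whose lifts fellow-travel along a long common core and differ only in a fixed neighborhood of $\tilde{I}$, so that in the difference $i(\mu_j,\gamma)-i(\mu_j,\gamma')$ the crossings along the common core cancel and only the crossings near $\tilde{I}$ survive; arranging the two local patterns to bound $B(\tilde{I})$ from inside and from outside produces, for each $j$, a two-sided approximation of $\tilde{\mu}_j(B(\tilde{I}))$ by quantities of the form $i(\mu_j,\gamma)-i(\mu_j,\gamma')$.

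Finally I would pass to the limit. The hypothesis gives $i(\mu_1,\gamma)=i(\mu_2,\gamma)$ for every simple closed geodesic $\gamma$, hence $i(\mu_1,\gamma)-i(\mu_1,\gamma')=i(\mu_2,\gamma)-i(\mu_2,\gamma')$ for every admissible pair. By inner and outer regularity of the Radon measures $\tilde{\mu}_j$ together with monotone convergence along the nested approximating boxes (whose extreme leaves carry no atom), both sides converge, respectively, to $\tilde{\mu}_1(B(\tilde{I}))$ and $\tilde{\mu}_2(B(\tilde{I}))$. This yields $i(\mu_1,I)=i(\mu_2,I)$ for every compact transverse geodesic arc $I$, and the theorem follows from \cite{Bonahon}.

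I expect the main obstacle to be the geometric construction in the second step: producing, near an arbitrary arc $I$, pairs of genuinely \emph{simple} closed geodesics that fellow-travel a common core and differ only locally, while keeping the cancellation of the ``far'' crossings exact in the limit. This is precisely where the first-kind hypothesis is indispensable --- it is what makes every geodesic arc visible to simple closed geodesics (when funnels or half-planes are present there are arcs, and indeed whole laminations, that no simple closed geodesic meets essentially, and the statement fails, cf.\ \cite{BS}) --- and it is also where the bookkeeping of atoms and of the fellow-traveling geometry must be carried out carefully.
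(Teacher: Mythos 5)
Your strategy is genuinely different from the paper's. The paper localizes combinatorially: it fixes a geodesic pants decomposition (which exists by \cite{BS}, \cite{AR} precisely because the group is of the first kind), reduces the problem to showing that the transverse measures of the homotopy classes of arcs joining boundary components of the pairs of pants agree, and then invokes the classical four-holed-sphere computation (\cite{FLP}) in which those arc measures are linear functions of the intersection numbers with the four boundary geodesics and three interior simple closed geodesics. You instead work measure-theoretically on $G(\Delta)$, aiming to recover $\tilde{\mu}_j(B(\tilde{I}))$ for an arbitrary box by approximating it with differences $i(\mu_j,\gamma)-i(\mu_j,\gamma')$ of intersection numbers of fellow-traveling simple closed geodesics. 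Both routes end at Bonahon's criterion that arc intersection numbers determine the lamination, and both must ultimately express arc data in terms of simple-closed-geodesic data; the paper's route buys an exact, finite linear-algebra identity inside each four-holed sphere, while yours would, if completed, avoid any choice of pants decomposition.

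However, the central step of your argument is not established, and it is exactly the step you flag as the ``main obstacle.'' For two distinct simple closed geodesics $\gamma,\gamma'$ whose lifts fellow-travel along a common core, the crossings along the core do \emph{not} cancel exactly in $i(\mu_j,\gamma)-i(\mu_j,\gamma')$: the two axes are distinct geodesics, so a leaf of the lamination can cross one and miss the other anywhere along the core, not only near $\tilde{I}$, and the symmetric difference of the two crossing sets is controlled only by making endpoints on $S^1$ converge and invoking regularity at the limiting geodesics --- which in turn requires an atom-free condition at \emph{every} stage of the approximation, not just at the two extreme leaves of $B(\tilde{I})$. Moreover, the existence of admissible pairs $(\gamma,\gamma')$ that are simple, fellow-travel a prescribed core, and whose ``local patterns'' sandwich $B(\tilde{I})$ from inside and outside is asserted but not constructed; on an infinite surface this requires an actual mechanism (in effect, the pants decomposition the paper uses) to route the closed-up curves through the surface without creating self-intersections. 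As written, the difference quantities are also not monotone set functions of nested boxes, so the appeal to monotone convergence does not apply directly. These gaps are fillable in principle, but filling them essentially reconstructs the localization that the paper obtains for free from the four-holed-sphere identity, so you should either carry out that construction in detail or switch to the pants-decomposition argument.
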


\begin{proof}
Fix a topological pants decomposition of $X$ such that each boundary curve belongs to two different pairs of pants. By \cite{BS} (see also \cite{AR}), the topological pants decomposition can be  straightened to a geodesic pants decomposition of $X$.  Now we simply recall that the statement for closed surfaces is proved by considering the unions of two adjacent pairs of pants that make a four holed sphere (see \cite{FLP}, \cite{PH}). In order to prove that the measured laminations are the same it is enough to prove that the transverse measures (for $\mu_1$ and $\mu_2$) of the homotopy classes of arcs of geodesics connecting boundary sides of the geodesic pants decompositions are equal. This is achieved in the compact case by using the fact that the intersection numbers with the four boundary geodesics and three additional simple closed geodesics in the interior of the four holed sphere are equal. Since this fact holds for infinite surfaces as well we conclude that $\mu_1=\mu_2$.
\end{proof}

\section{The correspondence between integrable holomorphic quadratic differentials and measured laminations}

Let $X$ be a hyperbolic Riemann surface without any conditions on its topology or geometry. 
We define a {\it horizontal measure map} $\mathcal{F}$ from the space $A(X)$ of integrable holomorphic quadratic differentials on $X$ to the space $ML(X)$ of measured laminations on $X$ by straightening the horizontal foliations of quadratic differentials to measured laminations on $X$. Namely,
$$
\mathcal{F}:A(X)\to ML(X)
$$
given by
$$
\mathcal{F}(\varphi )=\nu_{\varphi}.
$$

We will need the following lemma.

\begin{lem}
\label{lem:crosscut-comp}
Let $X$ be an arbitrary hyperbolic Riemann surface and $\varphi,\varphi_1$ two integrable holomorphic quadratic differentials on $X$. If 
$$
\nu_{\varphi}=\nu_{\varphi_1}
$$
then for every cross-cut vertical trajectory $\gamma$ of $\varphi$ we have
$$
\int_{{\gamma}}|Im(\sqrt{\varphi (z)}dz)|\leq\int_{{\gamma}}|Im(\sqrt{\varphi_1 (z)}dz)|.
$$
\end{lem}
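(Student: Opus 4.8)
The plan is to rewrite both sides of the asserted inequality as intersection numbers and then compare them after lifting to the universal cover $\Delta$, where the comparison becomes a topological linking statement. First I would dispose of the left-hand side. Since $\gamma$ is a vertical trajectory of $\varphi$, along $\gamma$ the form $\sqrt{\varphi}\,dz$ is purely imaginary, so the left-hand side equals $\int_{\gamma}|dv|=i(\mathcal{V}_{\varphi},\gamma)$, the intersection number of $\gamma$ with the horizontal foliation of $\varphi$. Applying Lemma \ref{lem:cross-cut-height} to $\varphi$ gives $i(\mathcal{V}_{\varphi},\gamma)=i(\nu_{\varphi},g)$ for $g=G(\gamma)$, and the hypothesis $\nu_{\varphi}=\nu_{\varphi_1}$ turns this into
$$
\int_{\gamma}|Im(\sqrt{\varphi (z)}dz)|=i(\nu_{\varphi},g)=i(\nu_{\varphi_1},g).
$$
The right-hand side is by definition $i(\mathcal{V}_{\varphi_1},\gamma)$, so the lemma reduces to establishing
$$
i(\nu_{\varphi_1},g)\leq i(\mathcal{V}_{\varphi_1},\gamma).
$$

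To prove this reduced inequality I would lift to $\Delta$: let $\tilde{\gamma}$ be a lift of $\gamma$ and $\tilde{g}=G(\tilde{\gamma})$ the geodesic sharing its two endpoints on $S^1$. By the very construction of $\tilde{\nu}_{\tilde{\varphi}_1}$ (the push-forward of the transverse measure under straightening), together with the limiting argument used in Lemma \ref{lem:cross-cut-height}, the quantity $i(\tilde{\nu}_{\tilde{\varphi}_1},\tilde{g})$ equals the total transverse measure $\int|dv_1|$ of those horizontal trajectories $h$ of $\tilde{\varphi}_1$ whose straightened geodesic $G(h)$ crosses $\tilde{g}$, each contributing exactly once since two geodesics of $\Delta$ meet at most once. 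By contrast, $i(\tilde{\mathcal{V}}_{\tilde{\varphi}_1},\tilde{\gamma})=\int_{\tilde{\gamma}}|dv_1|$ is the total transverse measure of the horizontal trajectories of $\tilde{\varphi}_1$ that cross $\tilde{\gamma}$, now counted with multiplicity because $\tilde{\gamma}$ (a vertical trajectory of the \emph{other} differential $\tilde{\varphi}$) may recross a single trajectory of $\tilde{\varphi}_1$.

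The key step is the set inclusion $\{\,h:G(h)\ \text{meets}\ \tilde{g}\,\}\subseteq\{\,h:h\ \text{meets}\ \tilde{\gamma}\,\}$. Indeed, $G(h)$ meets the geodesic $\tilde{g}$ exactly when the endpoints of $h$ link the endpoints of $\tilde{\gamma}$ on $S^1$; and since both $h$ (a regular or generalized horizontal trajectory, which by Marden--Strebel is a properly embedded arc accumulating to two distinct boundary points) and $\tilde{\gamma}$ (a cross-cut) separate $\Delta$, linked endpoints force $h$ and $\tilde{\gamma}$ to intersect. Because the transverse measure of $\tilde{\varphi}_1$ carried by $h$ equals that carried by $G(h)$, the left-hand family contributes its full measure once, while the right-hand family contains it and counts each crossing at least once; hence $i(\tilde{\nu}_{\tilde{\varphi}_1},\tilde{g})\leq i(\tilde{\mathcal{V}}_{\tilde{\varphi}_1},\tilde{\gamma})$, which descends to $X$. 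I expect the main obstacle to be making the measure bookkeeping of this last step precise—handling the countable family of horizontal strips, the contributions of ring domains (lifts of closed trajectories), and the multiplicity with which $\tilde{\gamma}$ can recross a leaf—but the fact that the geodesic $\tilde{g}$ meets each leaf of the lamination at most once is precisely what forces the comparison in the desired direction.
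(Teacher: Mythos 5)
Your proposal is correct and follows essentially the same route as the paper: rewrite the left side as $i(\tilde{\nu}_{\tilde{\varphi}},\tilde{g})$ via Lemma \ref{lem:cross-cut-height}, use the hypothesis to replace $\tilde{\nu}_{\tilde{\varphi}}$ by $\tilde{\nu}_{\tilde{\varphi}_1}$, and then bound $i(\tilde{\nu}_{\tilde{\varphi}_1},\tilde{g})$ by $\int_{\tilde{\gamma}}|dv_1|$ using the fact that a horizontal trajectory of $\tilde{\varphi}_1$ whose endpoints link those of $\tilde{\gamma}$ must cross $\tilde{\gamma}$, possibly with extra multiplicity. The only cosmetic difference is that you phrase the final comparison as a set inclusion of trajectory families, whereas the paper states the same linking/separation argument directly.
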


\begin{proof}
Fix an identification of the unit disk $\Delta$ with the universal cover of $X$. Lift $\varphi ,\varphi_1$ to holomorphic quadratic differentials $\tilde{\varphi},\tilde{\varphi}_1$ on $\Delta$. Note that $\tilde{\varphi},\tilde{\varphi}_1$ are not integrable on $\Delta$. However, the trajectory structure has the same properties as for integrable holomorphic quadratic differentials on $\Delta$ (see \cite{MardenStrebel}, \cite[Page 93, Section 19.7]{Strebel}). In particular, each trajectory has well-defined two endpoints on the ideal boundary $S^1$, and if a vertical and horizontal trajectory ray start at the same point of $\Delta$ their endpoints on $S^1$ are different. 

For a single lift $\tilde{\gamma}$ of a cross-cut vertical trajectory $\gamma$ of $\varphi$ we have $\int_{{\gamma}}Im(\sqrt{\varphi (z)}dz)= \int_{\tilde{\gamma}}Im(\sqrt{\tilde{\varphi} (z)}dz)$ and $\int_{{\gamma}}Im(\sqrt{\varphi_1 (z)}dz)= \int_{\tilde{\gamma}}Im(\sqrt{\tilde{\varphi}_1 (z)}dz)$ since the covering $\tilde{\gamma}\mapsto \gamma$ is injective. 
Let $\tilde{g}:=G(\tilde{\gamma})$ be the hyperbolic geodesic that shares the endpoints with $\tilde{\gamma}$. Let  $u+iv=\int_{*}\sqrt{\tilde{\varphi} (z)}dz$ be the natural parameter for $\tilde{\varphi}$ and $u_1+idv_1$ the natural parameter for $\tilde{\varphi}_1$ on $\Delta$.

By Lemma \ref{lem:cross-cut-height} we  have
$$
i( \tilde{\nu}_{\tilde{\varphi}},\tilde{g})=\int_{\tilde{\gamma}}dv.
$$

A geodesic of the support of $\tilde{\nu}_{\tilde{\varphi}_1}$ intersects $\tilde{g}$ if and only if the corresponding vertical trajectory intersects $\tilde{\gamma}$. This follows from the fact that all the endpoints are different and the existence of the intersection is decided based on the separation of the endpoints on $S^1$. 
Then 
$$
i(\tilde{\nu}_{\tilde{\varphi}_1},\tilde{g})\leq \int_{\tilde{\gamma}}|dv_1|,
$$
where a strict inequality is possible since $\tilde{\gamma}$ may intersect a horizontal trajectory of $\tilde{\varphi}_1$ more than once or it may intersect horizontal trajectories of $\tilde{\varphi}_1$ whose endpoints are not separated by the endpoints of $\tilde{\gamma}$. 

Our assumption that $\nu_{{\varphi}}=\nu_{{\varphi}_1}$ implies that
$$
i(\tilde{\nu}_{\tilde{\varphi}},\tilde{g})=i(\tilde{\nu}_{\tilde{\varphi}_1},\tilde{g})
$$
and we obtain
$$
\int_{\tilde{\gamma}}|dv|\leq\int_{\tilde{\gamma}}|dv_1|.
$$
\end{proof}

We are ready to prove the injectivity of the horizontal measure map. 

\begin{thm}
\label{thm:vertical_inj}
The horizontal measure map
$$
\mathcal{F}:A(X)\to ML(X)
$$
is injective.
\end{thm}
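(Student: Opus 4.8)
The plan is to assume $\nu_{\varphi}=\nu_{\varphi_1}$ and deduce a Reich--Strebel type integral inequality which, combined with the Cauchy--Schwarz inequality and a symmetry argument, forces $\varphi=\varphi_1$. The starting point is Lemma \ref{lem:crosscut-comp}: for every cross-cut vertical trajectory $\gamma$ of $\varphi$ one has $\int_{\gamma}|dv|\le\int_{\gamma}|dv_1|$, where $u+iv$ and $u_1+iv_1$ are the natural parameters of $\varphi$ and $\varphi_1$. Along a vertical trajectory of $\varphi$ we have $\sqrt{\varphi}\,dz=i\,dv$, hence $\sqrt{\varphi_1}\,dz=i\sqrt{\varphi_1/\varphi}\,dv$ and therefore $|dv_1|=|\operatorname{Im}(\sqrt{\varphi_1}\,dz)|=|\operatorname{Re}\sqrt{\varphi_1/\varphi}|\,|dv|$. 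Writing $k:=\sqrt{\varphi_1/\varphi}$, the trajectory inequality becomes $\int_{\gamma}|dv|\le\int_{\gamma}|\operatorname{Re}k|\,|dv|$.

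First I would integrate this inequality across the transversal direction. In the natural parameter of $\varphi$ the area element is $|dv|\,|du|=|\varphi|\,dx\,dy$, so integrating the above over a family of vertical trajectories parametrized by $u$ yields, on any region swept out by cross-cut vertical trajectories, $\iint|\varphi|\,dx\,dy\le\iint|\operatorname{Re}k|\,|\varphi|\,dx\,dy$. To cover all of $X$ I invoke Strebel's decomposition of the vertical trajectory structure of an integrable $\varphi$ into strip domains, ring domains (closed vertical trajectories) and recurrent (minimal) domains, up to a set of zero $\varphi$-area. On strip domains the inequality is exactly Lemma \ref{lem:crosscut-comp}. On a ring domain with core homotopic to a simple closed geodesic $\alpha$, every closed vertical trajectory $c$ is a competitor for the infimum defining $h_{\varphi_1}([\alpha])$ and satisfies $\int_c|dv|=i(\nu_{\varphi},\alpha)=h_{\varphi}([\alpha])$; since $\nu_{\varphi}=\nu_{\varphi_1}$ gives $h_{\varphi}([\alpha])=h_{\varphi_1}([\alpha])$ by Lemma \ref{lem:heights-intersections}, I again obtain $\int_c|dv|\le\int_c|dv_1|$ and integrate. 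Summing the contributions of all domains produces the global inequality
\[
\|\varphi\|=\iint_X|\varphi|\,dx\,dy\le\iint_X\Big|\operatorname{Re}\sqrt{\tfrac{\varphi_1}{\varphi}}\Big|\,|\varphi|\,dx\,dy.
\]

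To finish, I note $|\operatorname{Re}k|\le|k|$ pointwise and $|k|\,|\varphi|=\sqrt{|\varphi|\,|\varphi_1|}$, so by Cauchy--Schwarz
\[
\|\varphi\|\le\iint_X|k|\,|\varphi|\,dx\,dy=\iint_X\sqrt{|\varphi|\,|\varphi_1|}\,dx\,dy\le\|\varphi\|^{1/2}\|\varphi_1\|^{1/2},
\]
whence $\|\varphi\|\le\|\varphi_1\|$. Running the same argument with the roles of $\varphi$ and $\varphi_1$ exchanged (integrating over vertical trajectories of $\varphi_1$ and using the identity $|\operatorname{Re}\sqrt{\varphi/\varphi_1}|\,|\varphi_1|=|\operatorname{Re}k|\,|\varphi|$) gives $\|\varphi_1\|\le\|\varphi\|$, so $\|\varphi\|=\|\varphi_1\|$ and every intermediate inequality is an equality. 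Equality in Cauchy--Schwarz forces $|\varphi_1|=|\varphi|$ a.e., i.e. $|k|=1$, while equality in $|\operatorname{Re}k|\le|k|$ forces $\operatorname{Im}k=0$, i.e. $k$ is real a.e. Hence $k=\pm 1$ a.e., so the meromorphic function $\varphi_1/\varphi=k^2$ equals $1$ on a set of full measure and is therefore identically $1$; that is, $\varphi=\varphi_1$.

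The main obstacle is the integration step on the recurrent (minimal) domains, where individual vertical trajectories are neither closed nor cross-cuts, so neither Lemma \ref{lem:crosscut-comp} nor the ring-domain argument applies directly. I expect to treat these by approximation: a long segment $\sigma$ of a recurrent vertical trajectory between two returns to a fixed transversal can be closed up by a short arc into a simple closed curve $\alpha_{\sigma}$, and vertical-trajectory segments minimize $|dv|$-length in their homotopy class rel endpoints (as in the proof of Lemma \ref{lem:cross-cut-height}), so $\int_{\sigma}|dv|$ is comparable to $h_{\varphi}([\alpha_{\sigma}])=h_{\varphi_1}([\alpha_{\sigma}])\le\int_{\sigma}|dv_1|+o(1)$; letting the segments exhaust the domain while the closing arcs shrink recovers the integrated inequality there. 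Making this limiting and Fubini argument rigorous, together with verifying that the three types of domains exhaust $X$ up to a $\varphi$-null set, is the technical heart of the proof.
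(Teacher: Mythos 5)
Your proposal is essentially the paper's own argument: both decompose the vertical trajectory structure of $\varphi$ into closed, recurrent (spiral), and cross-cut parts, handle the cross-cut strips via Lemma \ref{lem:crosscut-comp} followed by a length--area/Cauchy--Schwarz estimate, handle the closed and recurrent parts via equality of heights (supplied by Lemma \ref{lem:heights-intersections}), sum to get $\|\varphi\|\leq\|\varphi_1\|$, symmetrize, and extract $\varphi=\varphi_1$ from the equality case. The one step you flag as open --- the integrated inequality on the recurrent domains --- is exactly what the paper closes by citing the Marden--Strebel minimal-norm inequality for cylinders and spiral sets (\cite{MardenStrebel1}, \cite[Sections 24.2, 24.5]{Strebel}), whose proof is the approximation-by-closed-curves argument you sketch.
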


\begin{proof}
Assume that $\varphi,\varphi_1\in A(X)$ such that $\mathcal{F}(\varphi )=\mathcal{F}(\varphi_1)$. We need to prove that $\varphi=\varphi_1$. 

We first fix the vertical foliation of $\varphi$ on $X$. Recall that up to measure zero the surface $X$ is divided into countably many horizontal cylinders, spiral sets and horizontal strips of cross-cuts (see \cite{Strebel}). 

If $R_k$ is either a vertical cylinder or a vertical spiral set, Marden and Strebel (see \cite{MardenStrebel1}, \cite[Sections 24.2, 24.5]{Strebel}) proved that 
\begin{equation}
\label{eq:cylinder-spiral}
\iint_{R_k}|\varphi|dxdy\leq \iint_{R_k}|\varphi_1|dxdy
\end{equation}
under the condition that $h_{\varphi}(\gamma )=h_{\varphi_1}(\gamma )$ for every simple closed 
curve $\gamma$ on $X$ that cannot be homotoped to a point or boundary component. Our assumption that $\nu_{\varphi}=\nu_{\varphi_1}$ implies this condition by Lemma \ref{lem:heights-intersections}. 

The set of vertical cross-cuts is divided into countably many mutually disjoint(except for the boundary trajectories) measurable vertical strips (see \cite{Strebel}) and let $S_j$ be one of the strips induced by a measurable subset $\beta_j$ of a horizontal arc. We use the natural parameter $w=u+iv$ for $\varphi$ to represent the strip $S_j$ in the complex plane with $\beta_j$ a measurable subset of the horizontal segment $[0,c_j]$ on the $u$-axis.

For $u\in \beta_j$, denote by $l_{\varphi}(u)$ the $dv$-measure of the vertical trajectory $\tilde{t}_u=\{ u\}\times [a_u,b_u]$ with the real part $u$. By Lemma \ref{lem:crosscut-comp} we have
$$
l_{\varphi}(u)=\int_{\tilde{t}_u}dv\leq\int_{\tilde{t}_u}|dv_1|=\int_{a_u}^{b_u}\Big{|}\frac{\partial v_1}{\partial v}\Big{|}dv
$$ 
and by integrating in $du$ we get
$$
\iint_{S_j}|\varphi (z)|dxdy=\int_{\beta_j}\int_{a_u}^{b_u}dvdu\leq \int_{\beta_j}\int_{a_u}^{b_u}\Big{|}\frac{\partial v_1}{\partial v}\Big{|}dvdu.
$$
By applying the Cauchy-Schwarz inequality to the above we obtain
$$
\Big{(}\iint_{S_j}|\varphi (z)|dxdy\Big{)}^2\leq \Big{(}\iint_{S_j}\Big{|}\frac{\partial v_1}{\partial v}\Big{|}dudv\Big{)}^2\leq \Big{(}\iint_{S_j}dudv\Big{)}\Big{(}\iint_{S_j}\Big{|}\frac{\partial v_1}{\partial v}\Big{|}^2dudv\Big{)}.
$$
The above inequality together with
$$
\iint_{S_j}\Big{|}\frac{\partial v_1}{\partial v}\Big{|}^2dudv\leq \iint_{S_j}\Big{(}\Big{|}\frac{\partial v_1}{\partial v}\Big{|}^2+\Big{|}\frac{\partial v_1}{\partial u}\Big{|}^2\Big{)}dudv=\iint_{S_j}|\varphi_1 (z)|dxdy
$$ 
implies that
\begin{equation}
\label{eq:crosscuts}
\iint_{S_j}{|}\varphi (z){|}dxdy\leq\iint_{S_j}{|}\varphi_1 (z){|}dxdy.
\end{equation}

By summing the inequalities (\ref{eq:cylinder-spiral}) and (\ref{eq:crosscuts}) over all cylinders, spiral sets and cross-cuts of the vertical foliation of $\varphi$ we obtain
$$
\iint_X|\varphi (z)|dxdy\leq\iint_X|\varphi_1(z)|dxdy
$$
and since the argument is symmetric we obtain the reverse inequality as well. Thus we obtain
\begin{equation}
\label{eq:equal_norms}
\iint_X|\varphi (z)|dxdy=\iint_X|\varphi_1 (z)|dxdy
\end{equation}
The equality (\ref{eq:equal_norms}) together with the method of the proof for (\ref{eq:crosscuts}) and (\ref{eq:cylinder-spiral}) implies that $\partial v_1/\partial u\equiv 0$ and the equality in the Cauchy-Schwarz inequality implies that $|\partial v_1/\partial v|\equiv 1$. Thus $v_1\equiv \pm v+const$ on $X$ and we conclude that $\varphi =\varphi_1$.
\end{proof}

Marden and Strebel \cite{MardenStrebel1} proved the Heights Theorem on the class of parabolic surfaces. Namely, for $X$ parabolic the Heights Theorem states that the map which to each integrable holomorphic quadratic differential $\varphi$ assigns the heights  $h_{\varphi}:\mathcal{S}\to\mathbb{R}$ is injective, where $\mathcal{S}$ is the set of all simple closed geodesics on $X$. Both closed and finite area hyperbolic Riemann surfaces are parabolic and a large number of infinite surfaces are also parabolic (see \cite{AhlforsSario}, \cite{BasHakSar}). On the other hand, a Cantor tree surface with geodesic pants decomposition whose boundary geodesics have lengths bounded between two positive constants (or a complement of the standard  middle-third Cantor set) is not parabolic  (see \cite{McM}). 
If a surface has a funnel end or if it contains a hyperbolic half-plane then the Heights Theorem does not apply since a differential $\varphi$ can have horizontal trajectories that do not intersect any simple closed geodesics.
The largest class of Riemann surfaces to which the Heights Theorem might hold are those whose covering group is of the first kind. Indeed, we prove

\begin{thm}
\label{thm:heights}
Let $X$ be an infinite Riemann surface whose fundamental group is of the first kind and let $\varphi,\psi\in A(X)$. If
$$
h_{\varphi}([\gamma ] )=h_{\psi}([\gamma ] )
$$
for all $\gamma\in\mathcal{S}$ then 
$$
\varphi =\psi .
$$
\end{thm}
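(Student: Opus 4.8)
The plan is to assemble the three ingredients already established---Lemma~\ref{lem:heights-intersections}, Theorem~\ref{thm:intersection-unique}, and Theorem~\ref{thm:vertical_inj}---into a single chain of implications. The heights hypothesis is a collection of real numbers attached to the homotopy classes of simple closed geodesics, while the desired conclusion is an equality of quadratic differentials. The bridge between the two is the horizontal measured lamination, and each of the three prior results supplies exactly one link: heights to intersection numbers, intersection numbers to the lamination itself, and the lamination back to the differential.

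First I would convert the heights data into intersection data. Since $\varphi,\psi\in A(X)$ are integrable, Lemma~\ref{lem:heights-intersections} applies---its hypothesis (``$\varphi$ is integrable or $\pi_1(X)$ is of the first kind'') holds here on both counts---and gives $h_{\varphi}([\gamma])=i(\nu_{\varphi},\gamma)$ and $h_{\psi}([\gamma])=i(\nu_{\psi},\gamma)$ for every simple closed geodesic $\gamma$. The assumed equality of heights then reads
\[
i(\nu_{\varphi},\gamma)=i(\nu_{\psi},\gamma)\qquad\text{for all }\gamma\in\mathcal{S}.
\]

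Next I would promote this equality of intersection numbers to an equality of measured laminations. The differentials $\varphi$ and $\psi$ produce horizontal measured laminations $\nu_{\varphi},\nu_{\psi}\in ML(X)$ by the construction of Section~3. Because $X$ has fundamental group of the first kind, Theorem~\ref{thm:intersection-unique} applies to the pair $\nu_{\varphi},\nu_{\psi}$ and yields $\nu_{\varphi}=\nu_{\psi}$. Finally, the injectivity of the horizontal measure map $\mathcal{F}$ established in Theorem~\ref{thm:vertical_inj} turns $\mathcal{F}(\varphi)=\nu_{\varphi}=\nu_{\psi}=\mathcal{F}(\psi)$ into $\varphi=\psi$, which is the desired conclusion.

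The argument itself presents no genuine obstacle once the prior results are in hand; the substance lives entirely inside those three theorems. The only points requiring care are the verifications that each result's hypotheses are met: integrability of $\varphi,\psi$ for Lemma~\ref{lem:heights-intersections}, and the first-kind assumption on $\pi_1(X)$ for Theorem~\ref{thm:intersection-unique}. I would flag these two checks explicitly---the latter is precisely the reason, noted before the statement, that the Heights Theorem fails once $X$ contains a funnel or a geodesic half-plane---rather than dwell on any computation.
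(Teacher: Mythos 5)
Your proposal is correct and follows exactly the same route as the paper: Lemma~\ref{lem:heights-intersections} converts heights into intersection numbers, Theorem~\ref{thm:intersection-unique} (using the first-kind hypothesis) upgrades the equality of intersection numbers to $\nu_{\varphi}=\nu_{\psi}$, and Theorem~\ref{thm:vertical_inj} gives $\varphi=\psi$. The paper's proof is a two-sentence version of the same chain, so there is nothing to add.
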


\begin{proof}
Since the fundamental group of $X$ is of the first kind and since $h_{\varphi}(\beta )=i(\beta ,\nu_{\varphi})$ and $h_{\psi}(\beta )=i(\beta ,\nu_{\psi})$, Theorem \ref{thm:intersection-unique} implies that
$$
\nu_{\varphi}=\nu_{\psi} .
$$
Then Theorem \ref{thm:vertical_inj} gives the conclusion.
\end{proof}

If $X$ is of a parabolic type, then the family of cross-cut vertical trajectories of any integrable holomorphic quadratic differential on $X$ has zero area. 
To see that Lemma \ref{lem:crosscut-comp} is necessary for the proof of Theorem \ref{thm:vertical_inj}, 
we give examples of integrable holomorphic quadratic differentials whose vertical foliations have strips of cross-cuts with positive area.

\begin{exam}
Let $X$ be a Riemann surface with a conformal hyperbolic metric that has a funnel end. Let $\alpha$ be the ideal boundary curve facing the funnel and let $\hat{X}$ be the double of $X$ across $\alpha$. Fix a simple closed curve $\gamma$ on $\hat{X}$ that essentially intersects $\alpha$. Strebel established that there exists an integrable holomorphic quadratic differential $\varphi_{\gamma}$ with one ring domain homotopic to $\gamma$. By restricting $\varphi_{\gamma}$ to $X$ we obtain an integrable holomorphic quadratic differential whose a.e. vertical trajectory is a cross-cut both ends accumulating to the ideal boundary curve $\alpha$.
\end{exam}

\begin{exam}
Let $X$ be a Riemann surface with a conformal hyperbolic metric that contains a hyperbolic half-plane which is attached along a bi-infinite geodesic to the convex core (see \cite{Bas}, \cite{BS}). We double $X$ along the ideal arc on the boundary of the hyperbolic plane and repeat the construction in the previous example to obtain an integrable holomorphic quadratic differential whose a.e. vertical trajectory is a cross-cut.
\end{exam}

\begin{exam}
Let $X$ be the unit disk $\Delta$ minus a countable set of points that accumulate to each point of the ideal boundary $S^1$ and are not equal to $0\in\Delta$. In this case the Riemann surface $X$ does not contain funnels or hyperbolic half-planes (see \cite{BS}). Equivalently, the covering group of $X$ is of the first kind. The holomorphic quadratic differential $dz^2$ is integrable on $X$ and a.e. vertical trajectory has both ends accumulating to $S^1$. It follows that the set of vertical cross-cuts has non-zero area. In particular, $X$ is not of a parabolic type.
\end{exam}

\section{Integrable holomorphic quadratic differentials on surfaces with bounded pants decompositions}

Let $X$ be an infinite hyperbolic surface equipped with a geodesic pants decomposition $\mathcal{P}=\{ P_i\}$. Each pair of pants $P_i$ has three boundary components which can be either simple closed geodesics or punctures, and at least one is a simple closed geodesic. Denote by $\{\alpha_n\}$ the set of all boundary geodesics of the pants $\{ P_i\}$. 
The pants decomposition $\mathcal{P}=\{ P_i\}$ is said to be {\it bounded} if there is a constant $M$ such that, for all $n$,
$$
1/M\leq l_X(\alpha_n)\leq M
$$
where $l_X(\cdot )$ is the length in the hyperbolic metric of $X$. If a pair of pants has a puncture on the boundary then the above condition does not apply to that boundary component. 

A hyperbolic surface obtained by gluing infinitely many geodesic pairs of pants with boundary geodesics bounded between two positive constants such that no boundary geodesic is left unglued to another boundary geodesic of a pair of pants is necessarily complete. Indeed, completeness follows because an upper bound on $l_X(\alpha_n)$ implies that every boundary geodesic $\alpha_n$ has a collar of definite width (see \cite{Buser}). Since a path leaving every compact subset of $X$ has to either go to a puncture or cross infinitely many such collars, it follows that it has an infinite length. Thus $X$ is complete and $\pi_1(X)$ is of the first kind.

Let $\alpha$ be a simple closed geodesic on $X$ and $\mathcal{C}_{\alpha}$ be its standard collar. Namely,
$$
\mathcal{C}_{\alpha}:=\{ \zeta\in X|d_X(\zeta ,\alpha )\leq\sinh^{-1}\frac{1}{\sinh ( l_X(\alpha )/2)}\}
$$
where $d_X$ is the hyperbolic distance on $X$ and $l_X(\alpha )$ is the hyperbolic length of $\alpha$ (see \cite{Buser}). 

We assume that $h_{\varphi}([\alpha ])\neq 0$, i.e. $\mathcal{V}_{\varphi}$ has no closed non-critical horizontal  trajectory homotopic to $\alpha$. 
Let $\mathcal{B}_{\alpha}$ be the set of horizontal arcs for ${\varphi}$ that connect the two boundaries $\alpha_1$ and $\alpha_2$ of the collar $\mathcal{C}_{\alpha}$, and that do not pass through a zero of $\varphi$ in $\mathcal{C}_{\alpha}$. 

We will need the following lemma.

\begin{lem}
\label{lem:restriction}
With the above notation, there exists a closed curve $\alpha'$ in $\mathcal{C}_{\alpha}$ homotopic to $\alpha$ such that
$$
i(\mathcal{V}_{\varphi},\alpha')=i(\mathcal{B}_{\alpha},\alpha').
$$
In other words, the total transverse measure $\mathcal{V}_{\varphi}$ of $\alpha'$ is equal to that of $\mathcal{B}_{\alpha}$.
\end{lem}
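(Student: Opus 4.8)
The plan is to reduce the claimed equality to the single assertion that one can choose a core curve whose transverse measure comes entirely from the crossing arcs $\mathcal{B}_{\alpha}$. Since $\mathcal{B}_{\alpha}$ is a sub-family of the leaves of $\mathcal{V}_{\varphi}$, for \emph{every} closed curve $\alpha'\subset\mathcal{C}_{\alpha}$ homotopic to $\alpha$ one has the trivial inequality $i(\mathcal{B}_{\alpha},\alpha')\leq i(\mathcal{V}_{\varphi},\alpha')$, the difference being exactly the transverse $dv$-measure that $\alpha'$ picks up along trajectories which are \emph{not} crossing arcs. Thus it suffices to produce a single $\alpha'\sim\alpha$ for which the non-crossing trajectories contribute zero transverse measure.

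First I would describe the trajectory structure of $\mathcal{V}_{\varphi}$ inside the annulus $\mathcal{C}_{\alpha}$. The hypothesis $h_{\varphi}([\alpha])\neq 0$ means that no closed non-critical horizontal trajectory is homotopic to $\alpha$, while an annulus carries no contractible closed leaf and (by the absence of nontrivial recurrence on a surface of this simple type, cf. Strebel's structure theorem) no recurrent leaf. Consequently, up to the at most countable, $dv$-null family of critical trajectories and zeros of $\varphi$, every trajectory meeting $\mathcal{C}_{\alpha}$ is either a crossing arc joining $\alpha_1$ to $\alpha_2$ (these are precisely the members of $\mathcal{B}_{\alpha}$) or a returning arc with both endpoints on the same boundary component; each returning arc, together with a boundary sub-arc, cuts off a disk ``cap'' attached to $\alpha_1$ or to $\alpha_2$, whose frontier is a critical trajectory. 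I would then pass to the maximal such caps.

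Next I would build $\alpha'$ as a step curve of the kind already used in the proof of Lemma \ref{lem:heights-intersections}. Ordering the crossing strips $\mathcal{S}(\beta_k)$ and the intervening caps cyclically by their transverse ($v$-)coordinate around $\alpha$, I let $\alpha'$ traverse each crossing strip by a vertical arc along a copy of $\beta_k$, contributing its full height $b_k=\int_{\beta_k}|dv|$, and I let $\alpha'$ cross each cap by running along the cap's frontier critical trajectory, which is horizontal and therefore contributes no $dv$-measure. Closing this curve after one turn around $\mathcal{C}_{\alpha}$ produces $\alpha'\sim\alpha$ with $\int_{\alpha'}|dv|=\sum_k b_k$. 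Because the rerouting arcs lie along leaves (the frontiers of the caps), $\alpha'$ never enters the interior of a cap and hence crosses no returning arc of positive measure; therefore $i(\mathcal{V}_{\varphi},\alpha')=\sum_k b_k=i(\mathcal{B}_{\alpha},\alpha')$, which is exactly the assertion.

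I expect the main obstacle to be organizing these rerouting arcs into a single closed curve that winds exactly once around $\mathcal{C}_{\alpha}$ when there are countably many caps and crossing strips, and in particular the possibility that caps attached to $\alpha_1$ and to $\alpha_2$ meet. Such meetings can occur only along frontier critical trajectories, which form a $dv$-null set, so they neither separate the region swept by the crossing arcs into pieces that obstruct an essential loop nor contribute transverse measure. The limiting and convergence bookkeeping is handled just as the $\epsilon$-control of the modified arcs in Lemma \ref{lem:heights-intersections}, except that here, by following the cap frontiers exactly rather than approximately, the excess contribution from non-crossing trajectories is made to vanish identically rather than merely to tend to zero.
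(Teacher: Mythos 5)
Your reduction (it suffices to find one representative of $[\alpha]$ that picks up no transverse measure from non-crossing trajectories) and your picture of the trajectory structure in the collar (crossing arcs versus returning arcs cutting off caps whose maximal frontiers are critical trajectories) agree with the paper. But the construction of $\alpha'$ is where your argument has a genuine gap, and it is exactly the step you flag as ``the main obstacle'' and then dismiss. The maximal crossing strips and the caps need not occur in a discrete cyclic order around the annulus: the set of points of $\alpha_1$ lying on leaves of $\mathcal{B}_{\alpha}$ is in general only a closed set whose complement $d_{\alpha}$ has countably many components, so strips and caps can be densely interleaved. Then there is no ``next'' strip or cap, and ``closing the curve after one turn'' is not a construction --- you would have to define the curve on a Cantor set of parameter values and verify continuity there, which is precisely the point at issue (the pieces you concatenate, in particular cap frontiers that track a crossing arc nearly to $\alpha_2$ and back, need not have diameters tending to zero). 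Relatedly, the identity $\int_{\alpha'}|dv|=\sum_k b_k$ requires each vertical arc $\beta_k$ to meet exactly the crossing arcs of its strip and nothing else; unless you prove that the crossing family decomposes into genuinely open full strips admitting such transversals, a vertical arc will also meet returning arcs and you only get an inequality in the wrong direction.

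The paper avoids both difficulties by not building $\alpha'$ from scratch. It starts from the closed curve $\alpha_1$ itself, shows that $d_{\alpha}$ has at most countably many components, and performs a local surgery on each component $K$ with endpoints $x,y$: $K$ is replaced by a path $p_K$ running along the leaves $h_x'$, $h_y'$ through $x$ and $y$ (either out to $\alpha_2$ and back, or into a common zero of $\varphi$), so that $i(\mathcal{V}_{\varphi},p_K)=0$. Because each surgery fixes the endpoints of $K$ and the curve is untouched on $\alpha_1\setminus d_{\alpha}$, the curve remains closed and homotopic to $\alpha$, it crosses each arc of $\mathcal{B}_{\alpha}$ exactly where $\alpha_1$ did, and no global cyclic bookkeeping or full-strip transversals are needed. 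Your idea of routing along cap frontiers is a reasonable alternative to routing along $h_x'$ and $h_y'$, but to make it work you should likewise anchor the construction to $\alpha_1$ and modify it component-by-component on $d_{\alpha}$, rather than assembling $\alpha'$ from a cyclically ordered list of pieces.
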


\begin{proof}
Let $\alpha_1$ be one boundary component of $\mathcal{C}_{\alpha}$. We modify $\alpha_1$ to a homotopic closed curve $\alpha'$ such that 
$$
i(\mathcal{V}_{\varphi},\alpha')=i(\mathcal{B}_{\alpha},\alpha').
$$
Consider the set $d_{\alpha}$ of points of $\alpha_1$ that do not lie on a leaf of $\mathcal{B}_{\alpha}$. 
A point $P$ of $d_{\alpha}$ can either lie on a singular leaf of $\mathcal{B}_{\alpha}$, or it is a zero of $\varphi$, or it lies on a leaf of $\mathcal{V}_{\varphi}$ that is tangent to $\alpha_1$ but does not enter $\mathcal{C}_{\alpha}$ near that point, or  it lies on a leaf of $\mathcal{V}_{\varphi}$ that connects $\alpha_1$ to itself inside $\mathcal{C}_{\alpha}$. The number of points in the first two cases is at most finite because the closure of $\mathcal{C}_{\alpha}$ contains at most finitely many zeros of $\varphi$. In the third case, all nearby horizontal trajectories connect $\alpha_1$ to itself and separate the point $P$ from $\alpha_2$. Thus we have at most countably many points in the third case. Since we are computing the $dv$-integrals 
 it is enough to consider the fourth case.

Given a horizontal arc $h'$ for ${\varphi}$ that connects $\alpha_1$ to itself inside $\mathcal{C}_{\alpha}$, the interval of $\alpha_1$ separated  from $\alpha_2$ by $h'$ is a subset of $d_{\alpha}$. Thus every component of $d_{\alpha}$ (except possibly finitely many corresponding to the first three cases) has an interior and $d_{\alpha}$ has at most countably many components. Let $K$ be a closed interval of $\alpha_1$ that is the closure of a component to $d_{\alpha}$
and denote by $x$ and $y$ its endpoints. The horizontal trajectories $h_x$ and $h_y$ for $\varphi$ that contain $x$ and $y$ necessarily intersect the interior of $\mathcal{C}_{\alpha}$. 
Let $h'_x$ and $h'_y$ be subarcs of $h_x$ and $h_y$ that connect from the inside the two boundaries of $\mathcal{C}_{\alpha}$, and have $x$ and $y$ as their endpoints. There are two different possibilities for $h_x'$ and $h_y'$. 

One possibility is that both $h_x'$ and $h_y'$ connect $\alpha_1$ to $\alpha_2$. We modify $\alpha_1$ by replacing $K$ 
with a path $p_K$ starting at $x$ following $h_x'$ then following an arc of $\alpha_2$ from the endpoint of $h_x'$ 
to the endpoint of $h_y'$ then following $h_y'$ to end at $y$ (see Figure 3). The path $p_K$ is built by arcs of the leaves of $\mathcal{V}_{\varphi}$ and consequently
$$
i(\mathcal{V}_{\varphi},p_K)=0.
$$

\begin{figure}[h]
\leavevmode \SetLabels
\L(.31*.1) $x$\\
\L(.41*.1) $y$\\
\L(.36*.06) $K$\\
\L(.58*.05) $x$\\
\L(.68*.05) $y$\\
\L(.62*.03) $K$\\
\L(.3*.5) $h_x'$\\
\L(.41*.5) $h_y'$\\
\L(.6*.5) $h_x'$\\
\L(.66*.5) $h_y'$\\
\endSetLabels
\begin{center}
\AffixLabels{\centerline{\epsfig{file =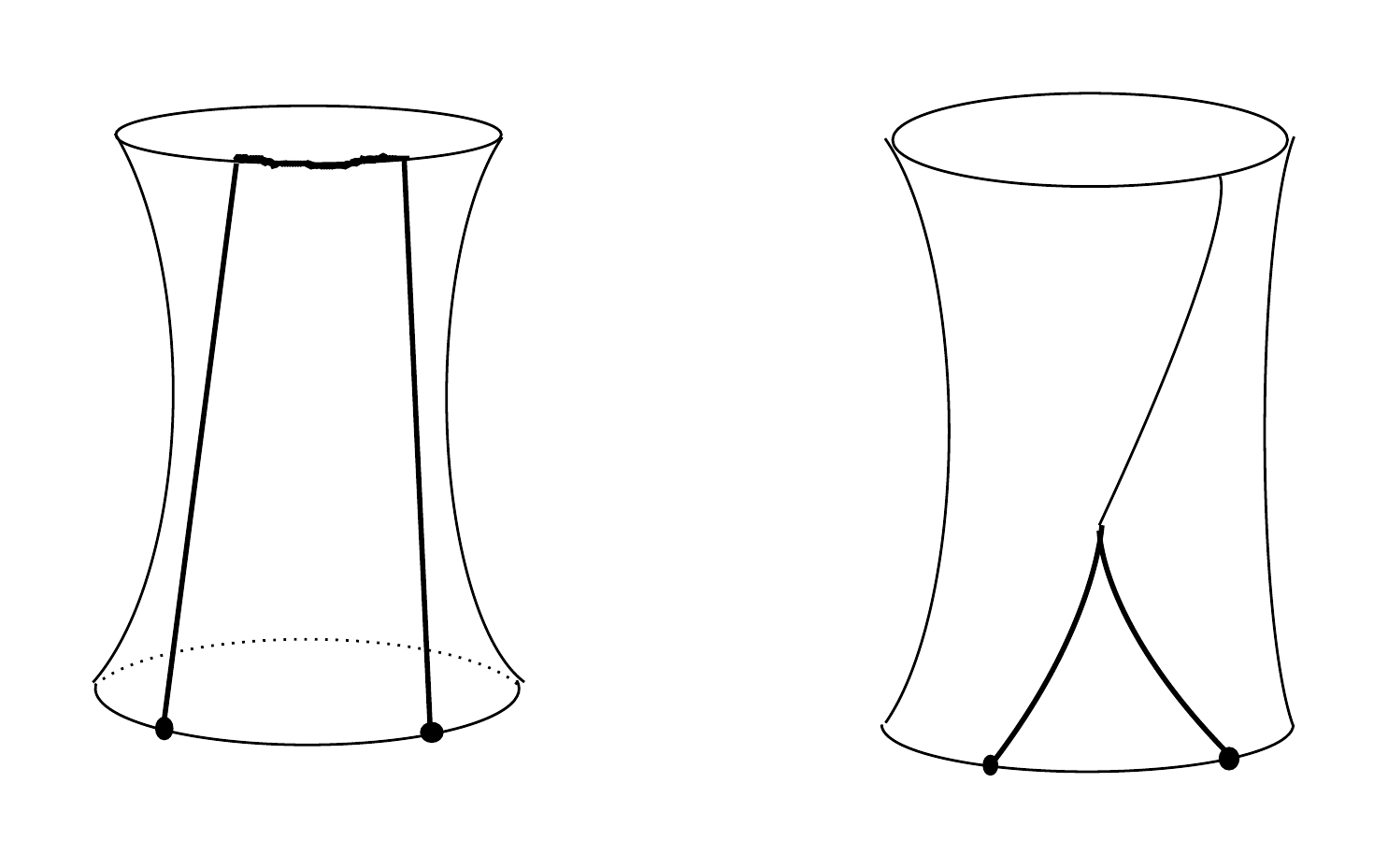,width=7.0cm,angle=0} }}
\vspace{-20pt}
\end{center}
\caption{The curve $\alpha'$. }
\end{figure}

The other possibility is that $h_x'$ and $h_y'$ meet at a zero of $\varphi$ inside $\mathcal{C}_{\alpha}$ (see Figure 3). Then we replace $K$ by an arc $p_K$ which consists of the parts of $h_x'$ and $h_y'$ from $x$ and $y$ to the zero of $\varphi$. Note that
$$
i(\mathcal{V}_{\varphi},p_K)=0.
$$

We perform this modification in an at most countably many disjoint (except possibly at endpoints) subarcs of $\alpha_1$ to obtain a close curve $\alpha'$ which is clearly homotopic to $\alpha_1$ and hence to $\alpha$ as well. By the above property of the intersection numbers, we obtain
$$
i(\mathcal{V}_{\varphi},\alpha')=i(\mathcal{B}_{\alpha},\alpha').
$$
\end{proof}

We are ready to prove the main theorem of this section. 

\begin{thm}
\label{thm:integrable_measured_laminations}
Let $\varphi$ be an integrable holomorphic quadratic differential on an infinite hyperbolic surface $X$ equipped with a bounded geodesic pants decomposition. Let $\nu_{\varphi}$ be the horizontal measured lamination obtained by straightening the leaves of the horizontal foliation of $\varphi$ as above. Then
$$
\|\nu_{\varphi}\|_{Th}<\infty .
$$
\end{thm}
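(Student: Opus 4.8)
The plan is to unwind the Thurston norm and reduce the estimate to a purely local one in a single collar or pair of pants, where a length--area inequality converts the intersection number into the $\varphi$--area of a bounded region; the finiteness of $\int_X|\varphi|$ together with the uniform geometry supplied by the bounded pants decomposition then forces a uniform bound. First I would record the uniform geometry. Because $1/M\le l_X(\alpha_n)\le M$ for every boundary geodesic, each standard collar $\mathcal{C}_{\alpha_n}$ has width and area bounded above and below by constants depending only on $M$, and the thick part of each pair of pants has uniformly bounded diameter. Hence a geodesic arc $I$ of hyperbolic length $1$ meets at most $N=N(M)$ collars and pants. Subdividing $I$ at the points where it crosses the collar boundaries and using additivity of the transverse measure,
$$
i(\nu_{\varphi},I)=\sum_j i(\nu_{\varphi},I_j),
$$
where each $I_j$ is a geodesic arc lying in a single collar $\mathcal{C}_{\alpha_n}$ or in the thick part of a single pair of pants, with at most $N$ summands. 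Thus it suffices to produce a constant $C=C(M)$ with
$$
i(\nu_{\varphi},J)\le C\Big(\int_{\hat\Omega}|\varphi|\,dxdy\Big)^{1/2}
$$
for every geodesic arc $J$ of length at most $1$ contained in such a region $\Omega$, where $\hat\Omega$ is $\Omega$ together with the adjacent collars. Since $\int_{\hat\Omega}|\varphi|\le\int_X|\varphi|<\infty$, summing the $N$ pieces gives $\|\nu_{\varphi}\|_{Th}\le N\,C\,(\int_X|\varphi|)^{1/2}<\infty$.

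The model case, which also drives the general estimate, is the bound for a boundary geodesic $\alpha=\alpha_n$ inside its collar, where Lemma \ref{lem:heights-intersections} and Lemma \ref{lem:restriction} apply directly. I would work in Fermi coordinates $(\rho,t)$ on $\mathcal{C}_{\alpha}$, with $\rho\in[-a,a]$, $t\in\mathbb{R}/l_X(\alpha)\mathbb{Z}$, metric $d\rho^2+\cosh^2\rho\,dt^2$ and area element $\cosh\rho\,d\rho\,dt$, where $2a$ is the collar width. Each curve $c_\rho=\{\rho=\mathrm{const}\}$ is homotopic to $\alpha$, so Lemma \ref{lem:heights-intersections} gives $i(\nu_{\varphi},\alpha)=h_{\varphi}([\alpha])\le\int_{c_\rho}|dv|$ for every $\rho$. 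Along $c_\rho$ one has the pointwise bound $|dv|=|Im(\sqrt{\varphi}\,dz)|\le|\varphi|^{1/2}|dz|$, and averaging the resulting inequality over $\rho\in[-a,a]$ converts the line integrals into an area integral,
$$
i(\nu_{\varphi},\alpha)\le\frac{1}{2a}\int_{-a}^{a}\int_{c_\rho}|dv|\,d\rho\le\frac{1}{2a}\int_{\mathcal{C}_{\alpha}}\frac{|\varphi|^{1/2}}{\lambda}\,dA,
$$
where $\lambda|dz|$ is the hyperbolic metric and $dA$ the hyperbolic area. Cauchy--Schwarz together with $(|\varphi|^{1/2}/\lambda)^2\,dA=|\varphi|\,dxdy$ then gives
$$
i(\nu_{\varphi},\alpha)\le\frac{(\mathrm{Area}(\mathcal{C}_{\alpha}))^{1/2}}{2a}\Big(\int_{\mathcal{C}_{\alpha}}|\varphi|\,dxdy\Big)^{1/2},
$$
and the collar geometry makes the prefactor depend only on $M$. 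Here Lemma \ref{lem:restriction} is what guarantees that this collar integral really captures the whole intersection on $\alpha$, since the transverse measure deposited on $\alpha$ comes only from the horizontal arcs $\mathcal{B}_{\alpha}$ crossing the collar.

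For a general arc $J$ of length at most $1$ I would run the same length--area scheme on a flow box: sweep $J$ by the family $J_s$ of geodesic arcs at perpendicular distance $s\in[-\delta,\delta]$, with $\delta=\delta(M)$ small enough that the box stays inside $\hat\Omega$, apply the pointwise inequality $|dv|\le|\varphi|^{1/2}|dz|$ on each $J_s$, average in $s$ and finish with Cauchy--Schwarz exactly as above. I expect the main obstacle to be precisely this step: unlike for a closed curve there is no homotopy to invoke, and the straightening that defines $\nu_{\varphi}$ replaces each horizontal trajectory $h$ by the geodesic $G(h)$ with the same endpoints, so a leaf of $\nu_{\varphi}$ meeting $J$ need not arise from a trajectory meeting $J$. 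Reconciling the transverse measure of the straightened lamination along $J$ with the vertical--displacement integral $\int_{J_s}|dv|$, i.e. establishing an arc version of Lemma \ref{lem:heights-intersections} of the form $i(\nu_{\varphi},J)\le\int_{J_s}|dv|$ valid after the flow-box averaging, is the technical heart of the argument, and it is here that the uniform transversality coming from the bounded geometry must be used.
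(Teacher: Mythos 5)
Your collar estimate for a cuff $\alpha_n$ is sound and is essentially the paper's: both arguments bound $i(\nu_{\varphi},\alpha_n)=h_{\varphi}([\alpha_n])$ by the transverse measure of the horizontal arcs crossing the standard collar (this is where Lemma \ref{lem:restriction} enters) and then run a length--area/Cauchy--Schwarz argument, yours with the hyperbolic metric in Fermi coordinates, the paper's with the extremal metric $\frac{1}{l_n(w)}\,dv$ and the modulus bound $\mathrm{mod}(\Gamma_n)\le C'$ coming from $l_X(\alpha_n)\ge 1/M$. Either way one gets $i(\nu_{\varphi},\alpha_n)^2\le C'\|\varphi\|_{L^1(X)}$ uniformly in $n$, with the relevant $\varphi$-area localized to the collar.

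The gap is exactly where you locate it, and it is fatal to your overall strategy rather than a technicality to be patched. For an open geodesic arc $J$ there is no analogue of Lemma \ref{lem:heights-intersections}: the mass that $\nu_{\varphi}$ deposits on $J$ comes from horizontal strips whose trajectories may lie far from $J$, because a trajectory $h$ and its straightening $G(h)$ agree only at their endpoints on $S^1$. So $i(\nu_{\varphi},J)$ is not controlled by $\int_{\hat\Omega}|\varphi|$ for any bounded enlargement $\hat\Omega$ of $J$, and no flow-box averaging will produce the local inequality your decomposition of $I$ requires. The paper sidesteps this with a topological observation your proposal is missing: a geodesic arc $I$ of length $1$ meets at most $k=k(M)$ pairs of pants of the decomposition, and every leaf of the support of $\nu_{\varphi}$ that meets $I$ either is one of the boundary cuffs of those pants or must cross one of them; hence $i(\nu_{\varphi},I)\le\sum\bigl(i(\nu_{\varphi},\alpha_n)+\nu_{\varphi}(\alpha_n)\bigr)$ over the boundedly many nearby cuffs, reducing everything to closed geodesics where the heights machinery applies. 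This reduction also forces a second estimate you omit entirely: the atomic masses $\nu_{\varphi}(\alpha_n)$, i.e.\ the heights $b_n$ of the cylinders of $\varphi$, must be bounded separately, which the paper does via $\sum_n b_n^2/M_n\le\|\varphi\|_{L^1(X)}$ together with the uniform upper bound $M_n\le M'$ on the moduli of the cylinders.
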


\begin{proof}
Let $\{\alpha_n\}_n$ be the family of boundary geodesics of a bounded geodesic pants decomposition of $X$. 
Since $\varphi$ is integrable, there is at most one vertical ray ending at each puncture of $X$ (see Strebel \cite[Page 31]{Strebel}). Therefore the geodesics of the support of $\nu_{\varphi}$ do not have an endpoint at a puncture of $X$. 

Let $\mathcal{V}_{\varphi}$ be the horizontal foliation of $\varphi$. Denote by $\mathcal{C}_n$ the standard collar around $\alpha_n$.
Let $\mathcal{V}_n$ be  the set of all horizontal arcs for $\varphi$ in $\mathcal{C}_n$ that connect its two boundary sides. We note that in general $\mathcal{V}_n$ does not cover the standard collar $\mathcal{C}_n$ as leaves of $\mathcal{V}_{\varphi}$ may connect one boundary of $\mathcal{C}_n$ to itself before crossing the other boundary. 
Define
$$
a_n:=h_{\varphi}([\alpha_n]),
$$
to be the height of the homotopy class of the curve $\alpha_n$.

Let
$a_n'$ be the total transverse measure of $\mathcal{V}_n$. Namely, we choose at most countable collection of arcs $\{T_n^k\}_k$ transverse to $\mathcal{V}_n$ such that 
the set of arcs of $\mathcal{V}_n$ that intersect both $T_n^k$ and $T_n^{k_1}$ consists of at most one arc.
We set
$$
a_n':=\sum_k\int_{T_n^k}|Im (\sqrt{\varphi (z)}dz)|=\sum_k\int_{(T_n^k) '}|dv|,
$$
where $(T_n^k)'$ is the image of $T_n^k$ in the natural parameter $w=u+iv$ for $\varphi$.

Since $a_n$ is the infimum of the intersections of $\mathcal{V}_{\varphi}$ with all the curves in the homotopy class $[\alpha_n]$ of $\alpha_n$, Lemma \ref{lem:restriction}  gives
$$
a_n\leq a_n'.
$$

Let $\Gamma_n$ be the set of all curves in the standard collar $\mathcal{C}_n$ around $\alpha_n$ connecting the two boundary sides. Since $\mathcal{V}_n\subset \Gamma_n$ and $1/M\leq l_X(\alpha_n)\leq M$ for all $n$, we have
\begin{equation}
\label{eq:transverse1}
C'\geq \mathrm{mod}(\Gamma_n)\geq \mathrm{mod}(\mathcal{V}_n)=\sum_k\int_{(T_n^k)'}\frac{1}{l_n(w)}dv
\end{equation}
where $(T_n^k)'$ is a transverse arc to $\mathcal{V}_n$, $l_n(w)$ is the Euclidean length of the leaf of $\mathcal{V}_n$ through $w$, $w=u+iv$ is the natural parameter for $\varphi$ and $dv$ is the transverse measure to $\mathcal{V}_{\varphi}$ in the natural parameter. The first inequality follows because $\mathrm{mod}(\Gamma_n)$ and $ l_X(\alpha_n)$ are comparable on compact subsets of $(0,\infty )$ (for example, see Maskit \cite{Maskit}). The equality is obtained by showing that $\frac{1}{l_n(w)}dv$ is the extremal metric for $\mathcal{V}_n$ which is a consequence of Beurling's criteria (see \cite[Lemma 2.8]{HakobyanSaric}).

On the other hand, the area computation for the standard collar gives
\begin{equation}
\label{eq:transverse2}
\|\varphi\|_{L^1(X)}=\int_X|\varphi (z)|dxdy\geq \sum_k\int_{(T_n^k)'}l_n(w)dv=\int_{\cup_k (T_n^k)'}l_n(w)dv.
\end{equation}
To simplify the notation we define $T_n':= \cup_k (T_n^k)'$. 
Using the Cauchy-Schwarz inequality together with (\ref{eq:transverse1}), (\ref{eq:transverse2}) and Lemma \ref{lem:heights-intersections} we obtain
\begin{equation}
\label{eq:intersection_cuff}
i(\nu_{\varphi},\alpha_n)^2\leq \Big{(}\int_{T_n'}dv\Big{)}^2\leq\int_{T_n'}l_n(w)dv\int_{T_n'}\frac{1}{l_n(w)}dv\leq C'\|\varphi\|_{L^1(X)}
\end{equation}
for all $n$, where $i(\nu_{\varphi},\alpha_n)$ is the geometric intersection number between a measured lamination $\nu_{\varphi}$ and a simple closed geodesic $\alpha_n$. 

Let $b_n$ be the height of the cylinder of $\varphi$ in the homotopy class of $\alpha_n$ if it exists. Let $M_n$ be the module of this cylinder. Since $l_X(\alpha_n)\geq 1/M$ for all $n$, it follows that $M_n\leq M'$ for all $n$, where $M'$ depends only on $M$ (see \cite{Maskit}).
Then
\begin{equation}
\label{eq:height}
\frac{1}{M'}\sum_nb_n^2\leq \sum_n\frac{b_n^2}{M_n}\leq\|\varphi\|_{L^1(X)}
\end{equation}
which implies that 
\begin{equation}
\label{eq:atom}
\nu_{\varphi}(\alpha_n)\leq M'\|\varphi\|_{L^1(X)},
\end{equation} where $\nu_{\varphi}(\alpha_n)$ is the atomic part of $\nu_{\varphi}$ on $\alpha_n$. 

We are ready to prove that $\nu_{\varphi}$ has a bounded Thurston norm. Let $I$ be a geodesic arc on $X$ of length $1$. Then $I$ intersects at most $k$ geodesic pairs of pants of the fixed bounded pants decomposition, where $k$ depends on the bound $M$. Any geodesic of the support of $\nu_{\varphi}$ that intersects $I$ is either a boundary geodesic belonging to the finitely many pairs of pants that $I$ intersects or it intersects at least one of the boundary geodesics of the finitely many pairs of pants. The total $\nu_{\varphi}$-mass of finitely many boundary geodesics and geodesics intersecting finitely many boundary geodesics is bounded by a constant multiple of the number of boundary geodesics by (\ref{eq:intersection_cuff}) and (\ref{eq:atom}). Therefore $i(\nu_{\varphi},I)\leq M''$ for all geodesic arcs $I$ on $X$ of length $1$ which finishes the proof. 
\end{proof}





\section{Integrable holomorphic quadratic differentials on surfaces with bounded geometry}

In this section we consider a class of surfaces introduced by Kinjo \cite{Kinjo} that contains as a proper subclass all surfaces equipped with a bounded geodesic pants decomposition. 

Let $X$ be an infinite hyperbolic surface and $S$ a subsurface whose boundary consists of simple closed geodesics. Assume that all the components of $X-S$ are planar surfaces. If there exists $M>0$ such that $S$ has a geodesic pants decomposition whose lengths are between $1/M$ and $M$, and the distance of each $z\in X-S$ to the boundary of $X-S$ is bounded above by $M$ then $X$ is said to be of {\it bounded geometry} (see \cite{Kinjo}). 

We remark that the bound on the pants decomposition of $S$ implies the bound on the boundary geodesics of $S$ which are also boundary geodesics of $X-S$. Kinjo \cite{Kinjo} proved that $X$ can be decomposed into right-angled hexagons with an upper bound $M’$ on the side lengths.

\begin{thm}
\label{thm:bounded_geom}
Let $X$ be an infinite hyperbolic surface with bounded geometry and $\varphi$ an integrable holomorphic quadratic differential on $X$. Then
$$
\|\nu_{\varphi}\|_{Th}<\infty .
$$
\end{thm}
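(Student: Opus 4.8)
The plan is to reduce the bounded-geometry case to the already-established bounded-pants case by exploiting Kinjo's structural result. Since $X$ has bounded geometry, it decomposes into a subsurface $S$ carrying a bounded geodesic pants decomposition together with the planar components of $X-S$, and Kinjo showed that the whole surface decomposes into right-angled hexagons with side lengths bounded above by some $M'$. First I would fix an arbitrary unit-length geodesic arc $I$ on $X$ and estimate $i(\nu_{\varphi},I)$ uniformly. The key point is that any geodesic in the support of $\nu_{\varphi}$ that crosses $I$ must cross at least one hexagon side (a geodesic segment) near $I$, and the bound $M'$ on side lengths guarantees that only a controlled number of hexagons lie within bounded hyperbolic distance of $I$.

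The main technical step is to run the collar and modulus argument from the proof of Theorem \ref{thm:integrable_measured_laminations} on the boundary geodesics of the hexagonal (or pants) pieces. For each boundary geodesic $\alpha_n$ of $S$, the lengths are squeezed between $1/M$ and $M$, so the standard collar $\mathcal{C}_n$ has definite modulus and the estimate
$$
i(\nu_{\varphi},\alpha_n)^2\leq C'\|\varphi\|_{L^1(X)}
$$
carries over verbatim from inequality (\ref{eq:intersection_cuff}), using Lemma \ref{lem:heights-intersections} to identify $h_{\varphi}([\alpha_n])$ with $i(\nu_{\varphi},\alpha_n)$, together with the Cauchy--Schwarz step against the extremal metric $\frac{1}{l_n(w)}dv$ on the collar. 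Likewise the atomic bound (\ref{eq:atom}), namely $\nu_{\varphi}(\alpha_n)\leq M'\|\varphi\|_{L^1(X)}$, follows from the module bound on any horizontal cylinder homotopic to $\alpha_n$. Thus the $\nu_{\varphi}$-mass deposited on each separating geodesic of the decomposition is uniformly bounded, independently of $n$.

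The genuinely new difficulty lies in the planar components of $X-S$: unlike in the pure bounded-pants case, these regions need not be tiled by pants with two-sided gluings, so I cannot directly appeal to a global bounded pants decomposition. Here I would use the hexagonal decomposition with bounded side lengths to play the role of the pants boundaries. Every geodesic in the support of $\nu_{\varphi}$ meeting $I$ either runs along one of the finitely many hexagon-boundary geodesics within distance $M'$ of $I$, or transversally crosses one of them; the transverse contributions are controlled by the intersection estimates above applied to the boundary geodesics of the hexagons, and the along-the-boundary contributions by the atomic bound. The hard part will be to verify that only finitely many hexagons — a number depending on $M'$ alone and not on $I$ — can be relevant to a unit arc, and that a geodesic of $\lambda_{\varphi}$ crossing $I$ is forced to register positive intersection with one of these finitely many controlled geodesics. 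Once this finiteness is in place, summing the uniform bounds over the bounded number of relevant hexagon sides yields $i(\nu_{\varphi},I)\leq M''$ with $M''$ independent of $I$, and taking the supremum over unit arcs $I$ gives $\|\nu_{\varphi}\|_{Th}<\infty$.
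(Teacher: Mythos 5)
Your overall strategy --- reduce to the collar/modulus estimates of Theorem \ref{thm:integrable_measured_laminations} on $S$, then use Kinjo's hexagonal decomposition of the planar part $X-S$ and a uniform count of the pieces near a unit arc --- matches the paper's. The estimates on the pants boundary geodesics $\alpha_n$ of $S$ and the atomic bound carry over exactly as you say. But there is a genuine gap in how you handle the planar part. You propose to control the mass of $\nu_{\varphi}$ crossing $I$ by ``the intersection estimates above applied to the boundary geodesics of the hexagons.'' The hexagon sides are geodesic \emph{arcs}, not simple closed geodesics: the estimate $i(\nu_{\varphi},\gamma)^2\leq C'\|\varphi\|_{L^1(X)}$ rests on the standard collar of a closed geodesic of length bounded above and below, the modulus of the resulting annulus, and Lemma \ref{lem:heights-intersections}, none of which apply to an open geodesic segment. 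Worse, bounding $i(\nu_{\varphi},a)$ for a hexagon side $a$ of length at most $M'$ is essentially the same problem as bounding $i(\nu_{\varphi},I)$ for a unit arc $I$, so as written your reduction is circular at exactly the step you flag as ``the hard part.''

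The missing idea is the paper's construction of auxiliary \emph{closed} curves from the hexagon data: for each common orthogonal $a$ joining boundary geodesics $\alpha_{n_1}$ and $\alpha_{n_2}$ of $S$, one concatenates $a$, a loop $b_1$ in the pair of pants $P_{n_1}$ based at $a\cap\alpha_{n_1}$, $a$ again, and a loop $b_2$ in $P_{n_2}$, producing a homotopically nontrivial simple closed curve of uniformly bounded length. Its geodesic representative $\gamma_a^{*}$ then \emph{is} a simple closed geodesic to which the collar/modulus argument applies, giving $i(\nu_{\varphi},\gamma_a^{*})\leq C$ uniformly in $a$. One then shows that a unit arc $I$ in $X-S$ meets at most $K$ components of $(X-S)-\{\gamma_a^{*}\}_a$, each bounded by at most $K_1$ arcs of the $\alpha_{n_k}$ and $\gamma_a^{*}$, so every leaf through $I$ must cross one of boundedly many such closed geodesics, yielding $i(\nu_{\varphi},I)\leq CKK_1$. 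Without some such passage from hexagon sides to bounded-length closed geodesics, your argument does not close.
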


\begin{proof}
Kinjo \cite{Kinjo1} proved that $X-S$ can be broken into hexagons with an upper bound on the side lengths. The hexagons are constructed using common orthogonals to the closed geodesics bounding the planar part $X-S$ that does not have a bounded pants decomposition. The orthogonals in the planar part also have a lower bound on their lengths since they have to cross half of a standard collar with geodesics of bounded lengths. Therefore the hexagons have bounded sides both from the above and below.

Let $\{\alpha_n\}$ be the boundary geodesics of the fixed pants decomposition of $S$. 
The intersection number $i(\nu_{\varphi},\alpha_n)$ is uniformly bounded by the same method as in the proof of Theorem \ref{thm:integrable_measured_laminations} since $\alpha_n$ have bounded lengths. By the same reason, we have that $\nu_{\varphi}(\alpha_n)$ is uniformly bounded as well. 

Denote by $\{\alpha_{n_k}\}$ the family of all geodesics that are on the boundary of $X-S$ and 
note that it is a subfamily of $\{\alpha_n\}$. Since $l_X(\alpha_{n_k})$ is bounded from above and the side lengths of hexagons are bounded from below, there is an upper bound $N$ on the number of hexagons meeting each boundary geodesic  $\alpha_{n_k}$. Let $a$ be one side of a hexagon that is orthogonal to boundary geodesics $\alpha_{n_1}$ and $\alpha_{n_2}$ of $S$. We form a closed curve $\gamma_a$  as follows. Let $P_{n_i}$ be the pair of pants of the pants decomposition of $S$ that has $\alpha_{n_i}$ on its boundary. For the point $A_{n_i}:=a\cap \alpha_{n_i}$, we choose a simple loop $b_i$ inside $P_{n_i}$ based at $A_{n_i}$  that separates the other two boundary components of $P_{n_i}$. Then $\gamma_a$ is obtained by concatenating $a$, $b_1$, $a$ and $b_2$ in the given order. The closed curve $\gamma_a$ is  homotopic to a simple  and homotopically non-trivial curve whose length is bounded above. Therefore a simple closed geodesic $\gamma_a^{*}$ homotopic to $\gamma_a$  also has a uniformly bounded length for all choices of the orthogonal $a$ (see Figure 4).
The proof of Theorem \ref{thm:integrable_measured_laminations} gives that $i(\gamma_a^{*},\nu_{\varphi})$ is uniformly bounded for all $a$. 

\begin{figure}[h]
\leavevmode \SetLabels
\L(.4*.05) $\alpha_{n_2}$\\
\L(.7*.78) $\alpha_{n_1}$\\
\L(.41*.17) $b_2$\\
\L(.56*.74) $b_1$\\
\L(.44*.49) $A_{n_2}$\\
\L(.55*.53) $A_{n_1}$\\
\L(.51*.48) $a$\\
\L(.35*.2) $P_{n_2}$\\
\L(.62*.8) $P_{n_1}$\\
\endSetLabels
\begin{center}
\AffixLabels{\centerline{\epsfig{file =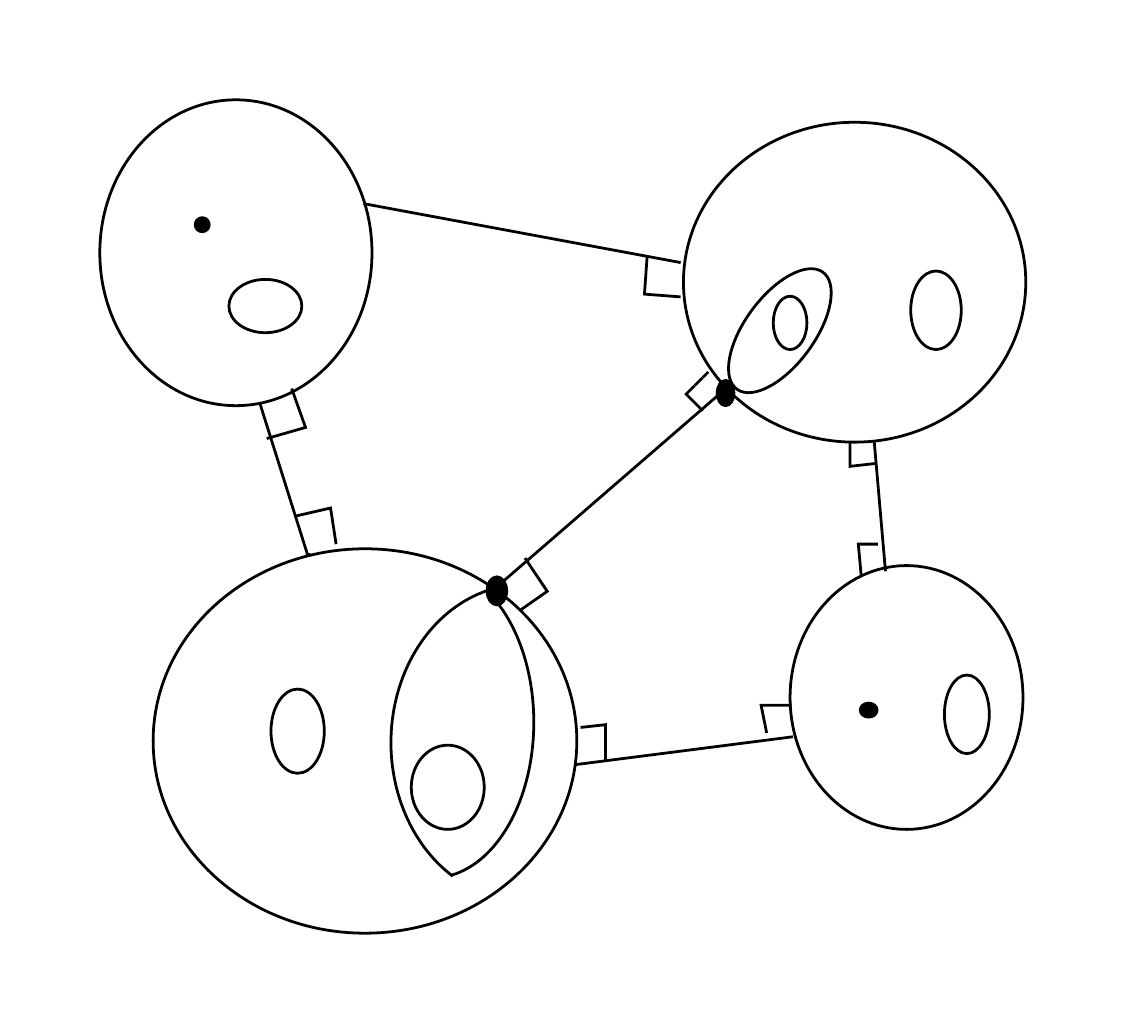,width=7.0cm,angle=0} }}
\vspace{-20pt}
\end{center}
\caption{The curve $\gamma_a$ on $X-S$. }
\end{figure}

Let $I$ be a geodesic arc of length $1$ in the planar part $X-S$. If $a$ is an orthogonal geodesic arc between boundary geodesics $\alpha_{n_i}$ and $\alpha_{n_j}$ used in our construction, then the points of the corresponding closed geodesic $\gamma_a^{*}$ are on a bounded distance $D$ from both $\alpha_{n_i}$ and $\alpha_{n_j}$ since its length is uniformly bounded. Let us consider the family $\{\gamma_a^{*}\}_a$ and the family of components of $(X-S)-\{\gamma_a^{*}\}_a$ (see Figure 5). There is a number $K$ such that any geodesic arc $I$ of length $1$ can intersect  
at most $K$ of these components. This follows because there is an upper bound on the number of  boundary geodesics of $X-S$ that are within a fixed distance from any point of $X-S$ and the geodesics $\gamma_a^{*}$ stay within fixed distance of the two boundary geodesics they intersect. In addition, there is a uniform bound on the number of geodesics $\gamma_a^{*}$ that meet each boundary geodesic. This implies the existence of a constant $K$ with the above property.  

\begin{figure}[h]
\leavevmode \SetLabels
\endSetLabels
\begin{center}
\AffixLabels{\centerline{\epsfig{file =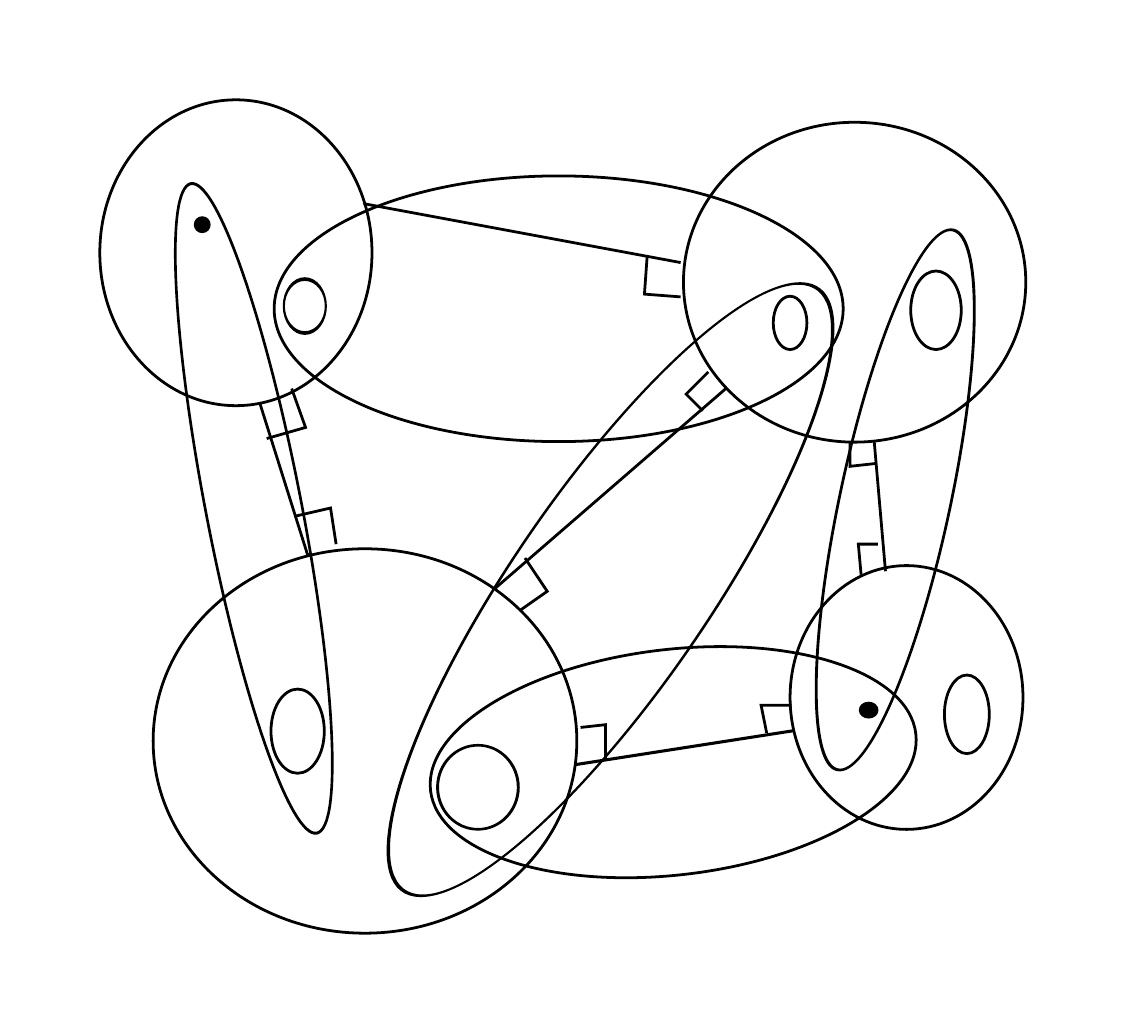,width=7.0cm,angle=0} }}
\vspace{-20pt}
\end{center}
\caption{The complements of $\{\gamma_a^{*}\}_a$ on $X-S$. }
\end{figure}

The boundary of each component of $(X-S)-\{\gamma_a^{*}\}_a$ contains at most $K_1$ arcs on either different boundary geodesics or different geodesics of the family $\{\gamma_a^{*}\}_a$, where $K_1$ is uniform over the whole family because the total number of these geodesics on a definite distance from an arbitrary point is bounded by a uniform constant. 
Since $I$ intersects at most $K$ components $(X-S)-\{\gamma_a^{*}\}_a$, it follows that each geodesic intersecting $I$ intersects at least one and at most $KK_1$ boundary geodesics $\{\alpha_{n_k}\}$ or geodesics of the family $\{\gamma_a^{*}\}_a$. 

Since we established that there exists $C>0$ such that $i(\nu_{\varphi},\alpha_{n_k})\leq C$ for each $k$ and $i(\nu_{\varphi},\gamma_a^{*})\leq C$ for each $a$, it follows that
$$
i(\nu_{\varphi},I)\leq CKK_1
$$
for each $I$ of hyperbolic length $1$ in $X-S$. If $I\subset S$ then a similar estimate follows by the bound on $l_X(\alpha_n)$. Thus we established that $\|\nu_{\varphi}\|_{Th}<\infty$.
\end{proof}

\section{Integrable holomorphic quadratic differentials on surfaces with upper bounded pants decomposition}

Let $X$ be an infinite hyperbolic surface with a geodesic pants decomposition $\mathcal{P}=\{ P_k\}$, where each geodesic pair of pants $P_k$ can have at most two punctures.  Denote by $\{\alpha_n\}$ the set of boundary geodesics of the pants $\{ P_k\}$. 
The pants decomposition $\mathcal{P}=\{ P_k\}$ is {\it upper bounded} if it satisfies the following two conditions: 
\begin{enumerate}
    \item there exists $M>0$ such that 
$$
l_X(\alpha_n)\leq M
$$
for all $n$, 
\item  there exists a subsequence $\{\alpha_{n_k}\}$ such that
$$
l_X(\alpha_{n_k})\to 0
$$
as $k\to\infty$.
\end{enumerate}

If the first condition is satisfied but the second condition is not satisfied then the surface $X$ has bounded pants decomposition.

\begin{thm}
\label{thm:not_bounded}
Let $X$ be a hyperbolic surface equipped with an upper bounded geodesic pants decomposition. 
Then there exists an integrable holomorphic quadratic differential $\varphi_0$ on $X$ such that 
$$
\|\nu_{\varphi_0}\|_{Th}=\infty .
$$
\end{thm}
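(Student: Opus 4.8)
The plan is to use the short geodesics $\alpha_{n_k}$ to produce a differential with tall but thin cylinders whose total area is finite. The point is that as $l_X(\alpha_{n_k})\to 0$ the standard collar $\mathcal{C}_{\alpha_{n_k}}$ has conformal modulus comparable to $1/l_X(\alpha_{n_k})$ (see \cite{Maskit}, \cite{Buser}), which tends to infinity; a flat cylinder of height $b$ and modulus $m$ has area $b^2/m$, so a large modulus lets us make the height $b$ large while keeping the area small. Concretely, I would first pass to a subsequence, relabelled $\{\beta_k\}$, with $l_X(\beta_k)$ decreasing so fast that the collar modulus $M_k$ satisfies $M_k\ge 4^k$, and then set the prescribed heights to be $b_k:=\sqrt{2^{-k}M_k}$. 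This gives $b_k\ge 2^{k/2}\to\infty$, while the area of a cylinder of height $b_k$ and modulus $M_k$ is $b_k^2/M_k=2^{-k}$, which is summable.

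Next I would realize these cylinders by a single integrable holomorphic quadratic differential. Exhaust $X$ by finite subsurfaces $X_1\subset X_2\subset\cdots$ with geodesic boundary, arranged so that $\beta_1,\dots,\beta_N$ lie in the interior of $X_N$. On $X_N$ (or its double) the Jenkins--Strebel theory (see \cite{Strebel}) produces a holomorphic quadratic differential $\varphi^{(N)}$ whose non-critical horizontal trajectories are all closed and decompose $X_N$, up to a critical graph of measure zero, into cylinders homotopic to $\beta_1,\dots,\beta_N$ of the prescribed heights $b_1,\dots,b_N$. Because the collars $\mathcal{C}_{\beta_k}$ are pairwise disjoint and embedded, the moduli $m_k$ of these cylinders stay comparable to $1/l_X(\beta_k)$, hence to $M_k$, and therefore $\iint_{X}|\varphi^{(N)}|\,dx\,dy=\sum_{k\le N}b_k^2/m_k\le C\sum_k 2^{-k}<\infty$ uniformly in $N$. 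With the $L^1$--norms uniformly bounded, a normal families argument extracts a subsequence of $\{\varphi^{(N)}\}$ converging locally uniformly to a limit $\varphi_0\in A(X)$, and the lower bound on the moduli together with the area control forces each cylinder homotopic to $\beta_k$ of height $b_k$ to persist in $\varphi_0$.

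Finally I would read off the horizontal measured lamination. Each cylinder homotopic to $\beta_k$ is foliated by closed horizontal trajectories which, upon straightening, collapse onto the single geodesic $\beta_k$ carrying transverse measure equal to the cylinder's height; thus $\nu_{\varphi_0}$ has an atom of mass $b_k$ along $\beta_k$, consistent with $h_{\varphi_0}([\beta_k])=b_k$ via Lemma \ref{lem:heights-intersections}. Since $l_X(\beta_k)\to 0$, for each $k$ there is a geodesic arc $I_k$ of hyperbolic length $1$ crossing $\beta_k$ transversally, and such an arc picks up the full atomic mass along $\beta_k$, so $i(\nu_{\varphi_0},I_k)\ge b_k\to\infty$. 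Consequently $\|\nu_{\varphi_0}\|_{Th}=\sup_I i(\nu_{\varphi_0},I)=\infty$.

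The hard part is the middle step: producing one integrable $\varphi_0$ carrying the entire countable family of cylinders with summable total area. Two things must be controlled simultaneously. First, the Jenkins--Strebel moduli of the approximants must remain comparable to the collar moduli $1/l_X(\beta_k)$, so that heights growing like $2^{k/2}$ cost only $O(2^{-k})$ in area; this rests on the disjointness and large modulus of the collars $\mathcal{C}_{\beta_k}$ in a surface with an upper bounded pants decomposition. Second, one must verify that these cylinders genuinely survive in the locally uniform limit, neither degenerating nor merging, which again uses the uniform $L^1$--bound together with the modulus lower bounds.
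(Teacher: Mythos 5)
Your proposal is correct and follows essentially the same route as the paper: pass to a subsequence of short geodesics whose collar moduli $M_k$ blow up, prescribe heights $b_k\to\infty$ with $\sum_k b_k^2/M_k<\infty$, and conclude that the atoms $\nu_{\varphi_0}(\beta_k)=b_k$ force $\|\nu_{\varphi_0}\|_{Th}=\infty$. The only difference is the existence step you flag as the hard part: the paper does not run the exhaustion/normal-families argument, but simply invokes Strebel's Theorem 22.1, which, for an admissible system of disjoint simple closed curves and prescribed heights satisfying exactly your summability condition, produces a unique integrable holomorphic quadratic differential whose closed horizontal cylinders realize those heights.
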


\begin{proof}
Since $l_X(\alpha_{n_k})\to 0$ as $k\to\infty$, we can choose a subsequence $\{\alpha_{n_{k_i}}\}_i$ such that
$$
l_X(\alpha_{n_{k_i}})\leq \frac{1}{i^2}
$$

The modulus $\mathrm{mod}(R_{n_{k_i}})$ of the standard collar neighborhood $R_{n_{k_i}}$ around $\alpha_{n_{k_i}}$ is asymptotic to $Ci^2$ as $i\to\infty$ (see Maskit \cite{Maskit}). Let $b_i=\sqrt[3]{i}$. Then
$$
\sum_{i=1}^{\infty} b_i^2/\mathrm{mod}(R_{n_{k_i}})<\infty .
$$
By Strebel \cite[Theorem 22.1]{Strebel}, there exists a unique integrable holomorphic quadratic differential $\varphi_0$ on $X$ with closed horizontal trajectories whose cylinders $R_{n_{k_i}}'$ are homotopic to $\alpha_{n_{k_i}}$ and have heights $b_i$. Then
$$
\nu_{\varphi_0}(\alpha_{n_{k_i}})=\sqrt[3]{i}\to\infty
$$
as $i\to\infty$. It follows that $\|\nu_{\varphi_0}\|_{Th}=\infty$ since the atomic measure on $\alpha_{n_{k_i}}$ goes to infinity. 
\end{proof}

The above theorem establishes the existence of an integrable holomorphic quadratic differential $\varphi_0$ on $X$ whose horizontal measured lamination is not Thurston bounded. The construction gives a differential with cylinders homotopic to a sequence of closed geodesics whose lengths go to zero. 
In the following proposition we show that this is one of two possible ways that an integrable holomorphic quadratic differential can have unbounded horizontal measured lamination. 

\begin{prop}
\label{thm:upper_bounded_measured}
Let $X$ be a Riemann surface equipped with an upper bounded geodesic pants decomposition with boundary geodesics $\{\alpha_n\}$. Let $\varphi$ be an integrable holomorphic quadratic differential on $X$. Then
$$
i(\nu_{\varphi},\alpha_n)\leq \frac{C}{\sqrt{l_X(\alpha_n)}}.
$$
\end{prop}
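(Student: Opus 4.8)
The plan is to revisit the chain of inequalities culminating in (\ref{eq:intersection_cuff}) in the proof of Theorem \ref{thm:integrable_measured_laminations}, but to keep careful track of how the constants depend on $l_X(\alpha_n)$, which is now bounded only from above. First I would split the mass of $\nu_{\varphi}$ sitting at $\alpha_n$ into two pieces: the contribution of the horizontal trajectories of $\varphi$ that cross $\alpha_n$ transversally, and, in the case where $\varphi$ has a ring domain (a horizontal cylinder) $R_n$ homotopic to $\alpha_n$, the atomic contribution $b_n=\nu_{\varphi}(\alpha_n)$ carried by $\alpha_n$ itself. The point of the proposition is that the $1/\sqrt{l_X(\alpha_n)}$ growth comes entirely from the second, atomic, piece; the transverse piece stays uniformly bounded.

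For the transverse part I would run verbatim the argument giving (\ref{eq:intersection_cuff}): with $\mathcal{V}_n$ the family of horizontal arcs crossing the standard collar $\mathcal{C}_n$ and $\Gamma_n$ the full family of arcs connecting the two sides of $\mathcal{C}_n$, the Cauchy--Schwarz estimate combined with (\ref{eq:transverse1}) and (\ref{eq:transverse2}) yields $i(\nu_{\varphi},\alpha_n)^2\le \mathrm{mod}(\Gamma_n)\,\|\varphi\|_{L^1(X)}$. The lower bound $1/M\le l_X(\alpha_n)$ was used in Theorem \ref{thm:integrable_measured_laminations} only to bound $\mathrm{mod}(\Gamma_n)$ by a constant; here I would instead invoke Maskit's comparison \cite{Maskit}, which gives $\mathrm{mod}(\Gamma_n)\le C\,l_X(\alpha_n)$. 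Since $l_X(\alpha_n)\le M$, this already bounds the transverse contribution uniformly, and in particular by $C/\sqrt{l_X(\alpha_n)}$.

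The essential new input is the atomic contribution. If $R_n$ is the horizontal cylinder of $\varphi$ homotopic to $\alpha_n$, with height $b_n$ and circumference $c_n$ measured in the natural parameter, then its conformal module is $M_n=b_n/c_n$ and its $\varphi$-area equals $b_nc_n=b_n^2/M_n$. As this area is at most $\|\varphi\|_{L^1(X)}$, I get $b_n^2\le M_n\,\|\varphi\|_{L^1(X)}$. Now $R_n$ is an embedded annulus in the free homotopy class of $\alpha_n$, so $M_n$ is at most the maximal module of such an annulus, which by the collar lemma \cite{Buser} (equivalently, by Maskit's comparison between extremal and hyperbolic length \cite{Maskit}) is comparable to $1/l_X(\alpha_n)$. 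Hence $b_n\le \sqrt{M_n\,\|\varphi\|_{L^1(X)}}\le C/\sqrt{l_X(\alpha_n)}$, and adding the two contributions gives the claimed estimate $i(\nu_{\varphi},\alpha_n)\le C/\sqrt{l_X(\alpha_n)}$.

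The main obstacle will be the module estimate for the atomic part: one must know that an embedded annulus in the homotopy class of a short geodesic $\alpha_n$ has module at most $\asymp 1/l_X(\alpha_n)$, so that the fixed area bound $b_n^2\le M_n\|\varphi\|_{L^1(X)}$ converts into the stated growth and, moreover, so that this growth is sharp in rate (cf. the construction in Theorem \ref{thm:not_bounded}, where $M_n\asymp 1/l_X(\alpha_{n_{k_i}})$ is exactly what allows the cylinder heights to blow up). Everything else is a bookkeeping adaptation of the proof of Theorem \ref{thm:integrable_measured_laminations}, with Maskit's comparison playing the role that the two-sided bound $1/M\le l_X(\alpha_n)\le M$ played in the bounded case.
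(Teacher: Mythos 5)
Your proposal is correct and its engine is the same as the paper's: rerun the Cauchy--Schwarz chain culminating in (\ref{eq:intersection_cuff}) with a length-dependent bound on $\mathrm{mod}(\Gamma_n)$ in place of the uniform one. The paper's proof is exactly that single substitution --- it quotes Maskit in the form $\mathrm{mod}(\Gamma_n)\leq C/l_X(\alpha_n)$ and lands directly on $i(\nu_{\varphi},\alpha_n)\leq C/\sqrt{l_X(\alpha_n)}$, with no decomposition into transverse and atomic pieces. You differ in two respects. First, your form of the modulus bound, $\mathrm{mod}(\Gamma_n)\leq C\,l_X(\alpha_n)$, is the natural one for the family connecting the two sides of the collar (its modulus is the reciprocal of the conformal modulus of $\mathcal{C}_n$, which grows like $1/l_X(\alpha_n)$), and it yields the stronger conclusion that the transverse mass is $O(\sqrt{l_X(\alpha_n)})$, hence uniformly bounded; this of course implies the stated inequality since $l_X(\alpha_n)\leq M$. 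Second, your ``atomic piece'' is a bound on $\nu_{\varphi}(\alpha_n)$, which does not contribute to $i(\nu_{\varphi},\alpha_n)$ at all: when $\varphi$ has a cylinder homotopic to $\alpha_n$, the geodesic $\alpha_n$ is itself a leaf of $\lambda_{\varphi}$, no other leaf crosses it, and $i(\nu_{\varphi},\alpha_n)=h_{\varphi}([\alpha_n])=0$ by Lemma \ref{lem:heights-intersections}, since a closed horizontal trajectory realizes height zero. So your guiding picture --- that the $1/\sqrt{l_X(\alpha_n)}$ growth comes entirely from the atom --- misdescribes the quantity in the Proposition; the paper deliberately keeps $i(\nu_{\varphi},\alpha_n)$ and $\nu_{\varphi}(\alpha_n)$ separate (compare (\ref{eq:atom}) and the Remark following the Proposition). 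Your cylinder-modulus estimate $b_n\leq C/\sqrt{l_X(\alpha_n)}$ is nonetheless correct and is the sharp companion fact that Theorem \ref{thm:not_bounded} saturates; it is simply not needed here. Since your transverse estimate alone proves the claim, there is no gap --- you prove more than required in one place and mislabel what you are bounding in another.
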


\begin{proof}
Fix a pants decomposition $\mathcal{P}$ with boundary geodesics $\{\alpha_n\}$ such that $l_X(\alpha_n)<M$. In the notation and by the proof of Theorem \ref{thm:integrable_measured_laminations}, we have that
$$
\mathrm{mod}(\Gamma_n)\geq \mathrm{mod}(\mathcal{V}_n)=\int_{T_n'}\frac{1}{l_n(w)}dv
$$
where $\Gamma_n$ is the family of curves connecting the two boundary component of $\mathcal{C}_{\alpha_n}$. By Maskit \cite{Maskit}, we have $\mathrm{mod}(\Gamma_n)\leq \frac{C}{l_X(\alpha_n)}$. Then using the method in the proof of Theorem \ref{thm:integrable_measured_laminations} we obtain
$$
i(\nu_{\varphi},\alpha_n)^2\leq \Big{(}\int_{T_n'}dv\Big{)}^2\leq\int_{T_n'}l_n(w)dv\int_{T_n'}\frac{1}{l_n(w)}dv\leq \frac{C'\|\varphi\|_{L^1(X)}}{l_X(\alpha_n)}.
$$
\end{proof}

\begin{rem}
By the construction in the proof of the above theorem and by the above proposition we conclude that an integrable holomorphic quadratic differential on $X$ gives unbounded horizontal measured foliation if either a sequence of cylinders $R_i$ have unbounded heights or if  $\frac{i(\alpha_{n_i},\nu_{\varphi})}{l_X(\alpha_{n_i})}\to\infty$ for a sequence  $l_X(\alpha_{n_i})\to 0$ as $i\to\infty$. The later condition is not precluded by the upper bound in the above proposition. 
\end{rem}

\section{Finite hyperbolic surfaces with funnels}

In this section $X$ denotes an infinite area hyperbolic surface whose fundamental group is finitely generated. Such surface $X$ necessarily has hyperbolic funnels. We prove

\begin{thm}
\label{thm:funnels}
Let $X$ be a hyperbolic surface with finitely generated fundamental group that contains hyperbolic funnels. If $\varphi\in A(X)$ then
$$
\|\nu_{\varphi}\|_{Th}<\infty .
$$
\end{thm}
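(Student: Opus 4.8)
The plan is to decompose $X$ into its convex core $\mathcal{C}(X)$ together with finitely many funnels, and to reduce the Thurston norm estimate to the two situations already handled. Since $\pi_1(X)$ is finitely generated and $X$ has infinite area, $\mathcal{C}(X)$ is a finite-area surface with geodesic boundary, and $X$ is obtained from $\mathcal{C}(X)$ by attaching finitely many funnels along the boundary geodesics $\beta_1,\dots,\beta_m$. First I would observe that because $\varphi$ is integrable, the geodesics of the support of $\nu_{\varphi}$ cannot have endpoints at a puncture (by Strebel's bound on vertical rays at punctures, exactly as in the proof of Theorem~\ref{thm:integrable_measured_laminations}), so $\nu_{\varphi}$ is carried on a compact part of $X$.

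The key geometric point I would establish is that no geodesic in the support of $\lambda_{\varphi}$ enters the funnels beyond a bounded depth. A horizontal trajectory accumulating to $S^1$ lifts to a leaf whose straightened geodesic $G(h)$ has both endpoints on $\partial_\infty\Delta$; if such a geodesic essentially entered a funnel it would have to cross the boundary geodesic $\beta_j$ twice or run off into the funnel's ideal boundary, and in either case one checks using Marden--Strebel that this cannot happen for horizontal trajectories of an integrable $\varphi$ (a funnel contributes a half-plane of trajectories that escape to $S^1$ without returning, carrying no transverse measure relevant to closed geodesics). Consequently the support of $\nu_{\varphi}$ is contained in the convex core up to a collar of bounded width around each $\beta_j$.

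With the lamination confined to a compact region, I would finish as follows. Since $\mathcal{C}(X)$ has finite area and finitely generated fundamental group, it admits a pants decomposition with finitely many boundary geodesics $\{\alpha_1,\dots,\alpha_N\}$ (together with the $\beta_j$), whose lengths are all bounded above and below by positive constants. The estimates \eqref{eq:intersection_cuff} and \eqref{eq:atom} from the proof of Theorem~\ref{thm:integrable_measured_laminations} then apply verbatim to each $\alpha_i$ and each $\beta_j$, giving $i(\nu_{\varphi},\alpha_i)\le C$ and $\nu_{\varphi}(\alpha_i)\le C$ uniformly over the finitely many curves. Finally, any unit-length geodesic arc $I$ meets only finitely many pairs of pants, and every geodesic of the support of $\nu_{\varphi}$ that crosses $I$ must cross one of the finitely many boundary geodesics; summing the uniformly bounded intersection and atomic contributions yields $i(\nu_{\varphi},I)\le M''$ for all such $I$, hence $\|\nu_{\varphi}\|_{Th}<\infty$.

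I expect the main obstacle to be the second step: rigorously controlling how the horizontal trajectories behave inside the funnels. One must rule out that positive transverse measure escapes into a funnel in a way that, while contributing nothing to any single $\beta_j$, nonetheless accumulates along arcs crossing the funnel; the cleanest route is to invoke that a funnel is not part of the convex core, so the straightened geodesics $G(h)$ are all contained in $\mathcal{C}(X)$, and then to argue that the collar estimate used for closed geodesics extends to the finitely many boundary geodesics $\beta_j$ bounding the funnels.
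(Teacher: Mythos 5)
Your first and third steps match the paper's proof: the convex core is a finite surface with geodesic boundary, its pants decomposition has finitely many boundary geodesics with lengths bounded above and below, and the collar/Cauchy--Schwarz estimates from Theorem \ref{thm:integrable_measured_laminations} give uniform bounds on $i(\nu_{\varphi},\alpha_j)$ and on the atomic masses, hence on $i(\nu_{\varphi},I)$ for unit arcs $I$ in the convex core. The gap is your second step, and it is not merely a technical obstacle: the claim that the support of $\lambda_{\varphi}$ stays within bounded depth of the convex core is false. A horizontal trajectory $h$ of $\tilde{\varphi}$ may have both endpoints in a single interval of discontinuity of $\Gamma$ on $S^1$ (the interval corresponding to the ideal boundary of a funnel); the straightened geodesic $G(h)$ then lies entirely inside the lifted funnel and can sit arbitrarily deep in it. The paper's own example following Theorem \ref{thm:heights} (restricting a Strebel ring-domain differential from the double $\hat{X}$ across the ideal boundary curve of the funnel) produces an integrable $\varphi$ for which almost every horizontal leaf is of exactly this type, so positive transverse measure genuinely lives on geodesics contained in a funnel, and these cross none of the finitely many boundary geodesics. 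Consequently your final counting step (``every geodesic of the support crossing $I$ must cross one of the finitely many boundary geodesics'') fails for arcs $I$ inside a funnel, which is precisely the case this theorem must handle beyond Theorem \ref{thm:integrable_measured_laminations}.

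The paper closes this gap with a separate argument for each funnel $F_j$: working in the half-plane $\tilde{F}_j$ with the cyclic group $\langle A_j\rangle$, it observes that a leaf of $\tilde{\lambda}_{\tilde{\varphi}}$ contained in $\tilde{F}_j$ meets at most one translate $A_j^k(r_1)$ of a ray orthogonal to $\tilde{\alpha}_j$, so any box of geodesics of bounded Liouville measure meeting the lamination lies in a bounded number of translates of a fundamental domain; the horizontal leaves with endpoints in such a box are then partitioned into boundedly many strips, each projecting injectively to $X$, and the Cauchy--Schwarz/modulus estimate (as in Proposition \ref{thm:upper_bounded_measured}) bounds the total height of these strips by a constant times $\|\varphi\|_{L^1(X)}$. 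You would need an argument of this kind, or some other way to bound the mass carried by leaves wholly contained in a funnel, to complete the proof.
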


\begin{proof}
The convex core of $X$ is a finite surface with geodesic boundary. The complement of the convex core consists of finitely many funnels attached to the boundary of the convex core along simple closed geodesics.

Let $\{\alpha_j\}_{j=1}^k$ be boundary geodesics of a geodesic pants decomposition of the convex core of $X$ with $\alpha_j$ for $j=1,\ldots ,k’$ being boundary geodesics of the convex core.

Since the set $\{\alpha_j\}_{j=1}^k$ is finite, the method of proof of Theorem \ref{thm:integrable_measured_laminations} gives that
$$
\max_j\ i(\alpha_j,\nu_{\varphi})<\infty
$$
and
$$
\max_j\ \nu_{\varphi}(\alpha_j)<\infty .
$$
It follows that the total measure of the geodesics intersecting compact arcs of length one in the convex core of $X$ is bounded away from the infinity. It remains to prove the same statement when the compact arcs are inside the finitely many funnels.

Let $F_j$ be one funnel with boundary geodesic $\alpha_j$. We consider the intersection numbers of geodesic arcs $I$ of length $1$ with the leaves of $\nu_{\varphi}$ which are completely contained in $F_j$. 
Consider a lift $\tilde{F}_j$ of $F_j$ with the boundary geodesic $\tilde{\alpha}_j$. Then $\tilde{F}_j$ is a hyperbolic half-plane with the boundary geodesic $\tilde{\alpha}_j$.
Let $A_j\in\pi_1(X)$ be the primitive hyperbolic element corresponding to $\alpha_j$. Let $\omega_j\subset \tilde{F}_j$ be a fundamental region for the action of $<A_j>$ which is between two geodesics rays $r_1$ and $r_2$ that start and are orthogonal to $\tilde{\alpha}_j$. Then we have $A_j(r_1)=r_2$ up to the exchange of $r_1$ and $r_2$. Since $\tilde{\lambda}_{\tilde{\varphi}}$ is a geodesic lamination invariant under the action of the cyclic group $<A_j>$, it follows that any geodesic that completely stays in $\tilde{F}_j$ cannot intersect more than one of $\{A_j^k(r_1)\}_{k\in\mathbb{Z}}$. This implies that a box of geodesics $Q=[a,b]\times [c,d]$ with $[a,b],[c,d]\subset \partial_{\infty}\tilde{F}_j$ such that at least two geodesic rays from $\{A_j^k(r_1)\}_{k\in\mathbb{Z}}$ separate $[a,b]$ and $[c,d]$ cannot contain geodesics of the support $\tilde{\lambda}_{\tilde{\varphi}}$ of $\tilde{\nu}_{\tilde{\varphi}}$. By elementary hyperbolic geometry, given $M>0$ there is $k>0$ such that any box of geodesics $Q=[a,b]\times [c,d]\subset G(\mathbb{H})$ with the Liouville measure at most $M$ and $\tilde{\lambda}_{\tilde{\varphi}}\cap Q\neq \emptyset$ is contained in an at most $k$ translates of the fundamental set $\omega_j$. 

We need to estimate $\nu_{\varphi}(Q)$. To do so, we consider the leaves of the horizontal foliation $\mathcal{\tilde{V}}_{\tilde{\varphi}}$ whose pairs of endpoints are in $Q$. There exists $k_1>0$ such that this set of leaves can be partitioned into an at most $k_1>0$ strips $\{\mathscr{S}(\beta_i)\}_i$ which injectively project  to strips on $X$. The height of each strip is the $\tilde{\nu}_{\tilde{\varphi}}$ measure of each set of geodesics represented by the strip. If $w=u+iv$ is the natural parameter for $\tilde{\varphi}$ and $T_i$ the transverse set then the argument in the proof of Proposition \ref{thm:upper_bounded_measured} gives
$$
(\int_{T_i}dv)^2\leq (\int_{T_i}l(w)dv)(\int_{T_i}\frac{1}{l(w)}dv)=|\mathscr{S}(\beta_i)|_{\tilde{\varphi}}\cdot \mathrm{mod} \Gamma_{[a,b]\times [c,d]}
$$ 
where $|\mathscr{S}(\beta_i)|_{\tilde{\varphi}}$ is the $\tilde{\varphi}$-area of the strip and $\Gamma_{[a,b]\times [c,d]}$ is the family of curves in $\Delta$ that connects $[a,b]$ with $[c,d]$. Since the strip $\mathscr{S}(\beta_i)$ maps injectively to $X$ it follows that $|\mathscr{S}(\beta_i)|_{\tilde{\varphi}}\leq \int_X|\varphi (w)|dudv$. Moreover, there exists $C>0$ such that $\mathrm{mod} \Gamma_{[a,b]\times [c,d]}\leq C$. Thus
$$
\sum_i\int_{T_i}dv\leq C'
$$
for all $i$, where $C'=k_1C\int_X|\varphi (w)|dudv$. Thus
$$
i(\nu_{\varphi},I)\leq C'
$$
for all hyperbolic arcs $I$ of length $1$ in $F_j$ and the theorem is proved. 
\end{proof}

\section{Non-integrable holomorphic quadratic differentials on infinite surfaces}

In this section we assume that $X$ is an infinite hyperbolic surface such that the group $\pi_1(X)$ is of the first kind. In other words, the surface $X$ is formed by gluing countably many geodesic pairs of pants and no funnels or half-planes are added. Let $\varphi$ be a non-integrable holomorphic quadratic differential on $X$. Recall that by the work of  Marden and Strebel \cite{MardenStrebel}, \cite{Strebel} one can still define the horizontal measured lamination $\nu_{\varphi}$ on $X$. In the following theorem we give sufficient conditions on the geometry of the surface and (non-integrable) holomorphic quadratic differential to guarantee that the induced horizontal measured lamination is bounded.

\begin{thm}
\label{thm:nonint}
Let $X$ be an infinite hyperbolic surface equipped with two bounded geodesic pants decomposition $\mathcal{P}=\{ P_k\}$ and $\mathcal{P}'=\{ P_k’\}$ which do not share a boundary geodesic. Let $\{\alpha_n\}$, $\{\alpha_n'\}$ be the geodesic boundaries of $P_k$ and $P_k'$. Let $\mathcal{C}_n$ and $\mathcal{C}_n'$ be the standard collars of $\alpha_n$ and $\alpha_n'$. If 
$$
\sup_n\int_{\mathcal{C}_n}|\varphi (\zeta )|d\xi d\eta<\infty 
$$
and
$$
\sup_n\int_{\mathcal{C}_n'}|\varphi (\zeta )|d\xi d\eta<\infty 
$$
then 
$$
\|\nu_{\varphi}\|_{Th}<\infty .
$$
\end{thm}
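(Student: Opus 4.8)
The plan is to follow the length--area scheme of the proof of Theorem \ref{thm:integrable_measured_laminations}, but to replace every appeal to the global finiteness $\|\varphi\|_{L^1(X)}<\infty$ by the \emph{local} hypotheses $\sup_n\int_{\mathcal{C}_n}|\varphi|<\infty$ and $\sup_n\int_{\mathcal{C}_n'}|\varphi|<\infty$. The key observation making the non-integrable case work is that the Cauchy--Schwarz estimate (\ref{eq:intersection_cuff}) only ever uses the $\varphi$-area of the horizontal arcs sweeping across a \emph{single} collar, and that area is bounded by the integral of $|\varphi|$ over that collar alone.

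First I would bound the transverse intersection numbers with the two families of cuffs. Fix $\alpha_n$ with collar $\mathcal{C}_n$ and let $\mathcal{V}_n$ be the horizontal arcs of $\varphi$ crossing $\mathcal{C}_n$, with transverse set $T_n'$ in the natural parameter $w=u+iv$. Exactly as in (\ref{eq:transverse1})--(\ref{eq:intersection_cuff}), Lemma \ref{lem:heights-intersections} and the Cauchy--Schwarz inequality give
$$
i(\nu_\varphi,\alpha_n)^2\le\Big(\int_{T_n'}dv\Big)^2\le\int_{T_n'}l_n(w)\,dv\cdot\int_{T_n'}\frac{1}{l_n(w)}\,dv\le\Big(\int_{\mathcal{C}_n}|\varphi|\,d\xi d\eta\Big)\,\mathrm{mod}(\Gamma_n),
$$
where $\int_{T_n'}l_n(w)\,dv$ is the $\varphi$-area swept \emph{inside} $\mathcal{C}_n$ and hence at most $\int_{\mathcal{C}_n}|\varphi|$, while $\mathrm{mod}(\Gamma_n)\le C'$ because $1/M\le l_X(\alpha_n)\le M$ for a bounded pants decomposition. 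Hence $i(\nu_\varphi,\alpha_n)\le C$ uniformly in $n$, and the identical argument applied to $\mathcal{P}'$ and the collars $\mathcal{C}_n'$ gives $i(\nu_\varphi,\alpha_n')\le C$ uniformly.

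The new ingredient, and the step where the two decompositions are indispensable, is the bound on the atomic masses $\nu_\varphi(\alpha_n)$, namely the heights of the $\varphi$-cylinders homotopic to the cuffs. In the integrable case these were controlled by the total area, which is now infinite, so I would argue instead as follows. Each $\alpha_n$ is a simple closed geodesic which, since $\mathcal{P}$ and $\mathcal{P}'$ share no boundary geodesic, is not a cuff of $\mathcal{P}'$; therefore $\alpha_n$ cannot be disjoint from every $\alpha_m'$, for otherwise it would lie in a single pair of pants of $\mathcal{P}'$ and be homotopic to one of its cuffs, hence equal to some $\alpha_m'$. Thus $\alpha_n$ crosses some $\alpha_m'$, and since the geodesic $\alpha_n$ carries the atom $\nu_\varphi(\alpha_n)$ as a leaf of $\lambda_\varphi$, that leaf contributes to the intersection with $\alpha_m'$ with multiplicity $i(\alpha_n,\alpha_m')\ge 1$:
$$
\nu_\varphi(\alpha_n)\le i(\alpha_n,\alpha_m')\,\nu_\varphi(\alpha_n)\le i(\nu_\varphi,\alpha_m')\le C.
$$
Symmetrically, $\nu_\varphi(\alpha_n')\le C$ for all $n$, using the cuffs of $\mathcal{P}$. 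Finally I would run the covering argument that concludes Theorem \ref{thm:integrable_measured_laminations}: for a geodesic arc $I$ of length $1$, boundedness of $\mathcal{P}$ ensures $I$ meets finitely many (say $k$) pairs of pants, and any leaf of $\lambda_\varphi$ meeting $I$ either equals one of the finitely many cuffs $\alpha_n$ bounding those pants or crosses one of them; the former mass is controlled by $\nu_\varphi(\alpha_n)\le C$ and the latter by $i(\nu_\varphi,\alpha_n)\le C$, so summing over the finitely many relevant cuffs yields $i(\nu_\varphi,I)\le C''$ independent of $I$, whence $\|\nu_\varphi\|_{Th}<\infty$.

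I expect the main obstacle to be precisely the atomic estimate: one must verify that a cuff $\alpha_n$ genuinely registers as a transverse intersection with the second decomposition, that is, that $\alpha_n$ is essential with respect to $\mathcal{P}'$ (this is exactly where the hypothesis that the two decompositions do not share a boundary geodesic is used), and that the atom supported on the geodesic $\alpha_n$ is counted with multiplicity at least one in $i(\nu_\varphi,\alpha_m')$. The remaining area estimates and the concluding covering count are routine adaptations of Theorem \ref{thm:integrable_measured_laminations} and Proposition \ref{thm:upper_bounded_measured}.
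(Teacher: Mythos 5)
Your proposal is correct and follows essentially the same route as the paper: the local collar-area bound feeds into the Cauchy--Schwarz/modulus estimate to control $i(\nu_\varphi,\alpha_n)$ and $i(\nu_\varphi,\alpha_n')$ uniformly, the atomic mass $\nu_\varphi(\alpha_n)$ is bounded via $\nu_\varphi(\alpha_n)\le i(\nu_\varphi,\alpha_m')$ using the second decomposition, and the covering argument from Theorem \ref{thm:integrable_measured_laminations} finishes. Your explicit justification that $\alpha_n$ must cross some $\alpha_m'$ (since the decompositions share no cuff) is a detail the paper leaves implicit, but it is exactly the intended use of that hypothesis.
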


\begin{proof}
Consider the family $\mathcal{V}_n$ of subleaves of the vertical foliation $\mathcal{V}_{\varphi}$ of $\varphi$ that connect one boundary of $\mathcal{C}_n$ to the other. Let $T_n$ be the transverse set to $\mathcal{V}_n$ and $T_n'$ its image in the natural parameter $w=u+iv$ of $\varphi$. Then
$$
B:=\sup_n\int_{\mathcal{C}_n}|\varphi (z)|dxdy\geq \sup_n\int_{T_n'}l_n(w)dv.
$$
By the same method of proving (\ref{eq:intersection_cuff}), we get
$$
\sup_ni(\nu_{\varphi},\alpha_n)^2\leq \Big{[}\sup_n\int_{T_n'}l_n(w)dv\Big{]}\Big{[}\sup_n\int_{T_n'}\frac{1}{l_n(w)}dv\Big{]}<\infty .$$
Indeed, $\int_{T_n'}\frac{1}{l_n(w)}dv=\mathrm{mod}(\Gamma_n)\leq\mathrm{mod}(\mathcal{C}_n)\leq B'$ for some $B'$ independent of $n$ by the bound on $l_X(\alpha_n)$. 

The same method shows that $\sup_ni(\nu_{\varphi},\alpha_n')<\infty$. Since
$$
\nu_{\varphi}(\alpha_n)\leq i(\nu_{\varphi}, \alpha_n')
$$
it follows that
$$
\sup_n \nu_{\varphi}(\alpha_n)<\infty .
$$

This implies $\|\nu_{\varphi}\|_{Th}<\infty$ as in the proof of Theorem \ref{thm:integrable_measured_laminations}.
\end{proof}

\author{Dragomir \v Sari\' c, Department of Mathematics,  Graduate Center and Queens College, CUNY, Dragomir.Saric@qc.cuny.edu}


\begin{thebibliography}{Thua}

\vskip .5cm



\bibitem{AhlforsSario} L. V. Ahlfors and L. Sario, {\it Riemann surfaces}, 
Princeton Mathematical Series, No. 26 Princeton University Press, Princeton, N.J. 1960.






\bibitem{AR}   V.  Alvarez and J.M. Rodriguez, {\it Structure
Theorems for Riemann and topological surfaces}, J. London Math. Soc. (2) 69
(2004), 153-168.



\bibitem{Bas} A. Basmajian, {\it  Hyperbolic structures for surfaces of infinite type}, Trans. Amer. Math. Soc. 336, no. 1, March 1993, 421-444.



\bibitem{BasHakSar} A. Basmajian, H. Hakobyan and D. \v Sari\' c, in preparation.

\bibitem{BS} A. Basmajian and D. \v Sari\' c, {\it Geodesically complete hyperbolic structures}, Math. Proc. Cambridge Philos. Soc. 166 (2019), no. 2, 219-242.

\bibitem{Bear} A. Beardon, {\it The geometry of discrete groups},  
Graduate Texts in Mathematics, 91. Springer-Verlag, New York, 1983.

\bibitem{Bonahon} F. Bonahon, {\it Closed curves on surfaces}, manuscript.

\bibitem{BonahonSaric} F. Bonahon and D. \v Sari\' c, {\it A Thurston boundary for infinite-dimensional Teichm\"uller spaces}, Preprint. ArXiv:1805.05997.

\bibitem{Buser} P. Buser,  {\it Geometry and spectra of compact Riemann surfaces},
Progress in Mathematics, 106. Birkhuser Boston, Inc., Boston, MA,  1992.

\bibitem{CGL} R. Chamanara, F. P. Gardiner, and N. Lakic, {\it A  hyperelliptic  realization  of  the  horseshoe  and baker  maps},  Ergodic  Theory  Dynam.  Systems 26 (2006),  no.  6,  1749-1768. 

\bibitem{FLP} A. Fathi, F. Laudenbach and V. Po\'enaru, {\it Thurston's work on surfaces},
Translated from the 1979 French original by Djun M. Kim and Dan Margalit. Mathematical Notes, 48. Princeton University Press, Princeton, NJ, 2012.

\bibitem{GardinerLakic}  F. Gardiner and N. Lakic, {\it Quasiconformal Teichm\" uller theory}, Mathematical Surveys and Monographs, 76. American Mathematical Society, Providence, RI, 2000. 

\bibitem{HakobyanSaric} H. Hakobyan and D. \v Sari\' c, {\it Limits of Teichm\" uller geodesics in the universal Teichm\" uller space}, Proc. Lond. Math. Soc. (3) 116 (2018), no. 6, 1599-1628. 

\bibitem{H} W. P. Hooper, {\it An infinite surface with the lattice property I: Veech groups and coding geodesics}, Trans. Amer. Math. Soc. 366 (2014), no. 5, 2625-2649.

\bibitem{H1} W. P. Hooper, {\it Immersions and translation structures I: The space of structures on the pointed disk}, Conform. Geom. Dyn. 22 (2018), 235-270.

\bibitem{HT} W. P. Hooper and R. Trevi\~no, {\it Indiscriminate  covers of infinite translation surfaces
are innocent, not devious}, Ergodic Theory Dynam. Systems 39 (2019), no. 8, 2071-2127.

\bibitem{HW} W. P. Hooper and B. Weiss, {\it Generalized Staircases: Recurrence and Symmetry}, Annales de l'Institut Fourier, 62 no. 4 (2012), p. 1581-1600.


\bibitem{HubertS} P. Hubert and G. Schmith\"usen, {\it Infinite  translation  surfaces  with  infinitely generated Veech groups}, J. Mod. Dyn.4(2010), no. 4, 715-732.

\bibitem{HubertW} P. Hubert and B. Weiss, {\it Ergodicity for infinite periodic translation surfaces}, Compositio Math. 149 (2013) 1364-1380.





 \bibitem{HubbardMasur} J. Hubbard and H. Masur, {\it Quadratic differentials and foliations}, Acta Math. 142 (1979), no. 3-4,  221-274.


\bibitem{Kerckhoff} S. Kerckhoff, {\it The asymptotic geometry of Teichm\" uller space}, Topology 19 (1980), no. 1, 23-41.

\bibitem{Ker} B. K\'er\'ekj\'arto, {\it   Vorlesungen \"uber Topologie}  I, Springer, Berlin, 1923.



\bibitem{Kinjo} E. Kinjo, {\it On Teichm\"uller metric and the length spectrums of topologically infinite Riemann surfaces}, 
Kodai Math. J.  34  (2011),  no. 2, 179-190.

\bibitem{Kinjo1} E. Kinjo, {\it On the length spectrum metric in infinite dimensional Teichm\" uller spaces}, Ann. Acad. Sci. Fenn. Math. 39 (2014), no. 1, 349-360.

\bibitem{MardenStrebel} A. Marden and K. Strebel, {\it On the ends of trajectories},  Differential geometry and complex analysis, 195-204, Springer, Berlin, 1985.

\bibitem{MardenStrebel1} A. Marden and K. Strebel, {\it The heights theorem for quadratic differentials on Riemann surfaces}, Acta Math. 153 (1984), no. 3-4, 153-211.





\bibitem{Maskit} B. Maskit, {\it Comparison of hyperbolic and extremal lengths}, 
Ann. Acad. Sci. Fenn. Ser. A I Math. 10 (1985), 381-386. 

\bibitem{McM} C. McMullen, {\it Renormalization and 3-manifolds which fiber over the circle}, Annals of Mathematics Studies, 142. Princeton University Press, Princeton, NJ, 1996.




\bibitem{PSV} P. Przytycki,  G. Schmith\" usen,  and F. Valdez, {\it Veech  groups  of  Loch  Ness  
monsters}, Ann. Inst. Fourier 61(2011), no. 2, 673-687.


\bibitem{Saric} D. \v Sari\' c, {\it Real and Complex Earthquakes}, Trans.
Amer. Math. Soc. 358 (2006), no. 1, 233-249.

\bibitem{Saric1} D. \v Sari\' c, {\it Bounded earthquakes}, Proc. Amer. Math. Soc. 136 (2008), no. 3, 889-897. 



\bibitem{Strebel} K. Strebel, {\it Quadratic differentials}, Ergebnisse der Mathematik und ihrer Grenzgebiete (3) [Results in Mathematics and Related Areas (3)], 5. Springer-Verlag, Berlin, 1984. 


\bibitem{Strebel2} K. Strebel,  {\it The mapping by heights for quadratic differentials in the disk}, Ann. Acad. Sci. Fenn. Ser. A I Math. 18 (1993), no. 1, 155-190. 


\bibitem{Thurston} W. Thurston, {\it Three-dimensional geometry and topology},  Vol. 1. Edited by Silvio Levy. Princeton Mathematical Series, 35. Princeton University Press, Princeton, NJ, 1997. 

\bibitem{Trevino} R. Trevi\~no, {\it On the ergodicity of flat surfaces of finite area}, Geom. Funct. Anal. 24, 360-386 (2014).

\bibitem{V} F. Valdez, {\it Infinite genus surfaces and irrational polygonal billiards}, Geom. Dedicata 143 (2009), 143-154.

\end{thebibliography}
\end{document}